\newcommand{\bcen}{\begin{center}}
\newcommand{\ecen}{\end{center}}
\newtheorem{theorem}{Theorem}[section]
\newtheorem{lemma}[theorem]{Lemma}
\newtheorem{remark}[theorem]{Remark}
\begin{document}
\setcounter{page}{1}
\title{Several isoperimetric inequalities of Dirichlet and Neumann eigenvalues of the Witten-Laplacian}
\author{Ruifeng Chen,~~ Jing Mao$^{\ast}$}

\date{}
\protect \footnotetext{\!\!\!\!\!\!\!\!\!\!\!\!{~$\ast$ Corresponding author\\
MSC 2020:
35P15, 35J10, 35J15. }\\
{Key Words: Witten-Laplacian, Dirichlet eigenvalues, Neumann
eigenvalues, Laplacian, isoperimetric inequalities. } }
\maketitle ~~~\\[-15mm]

\begin{center}
{\footnotesize Faculty of Mathematics and Statistics,\\
 Key Laboratory of Applied
Mathematics of Hubei Province, \\
Hubei University, Wuhan 430062, China\\
 Key Laboratory of Intelligent
Sensing System and Security (Hubei
University), Ministry of Education\\
Emails: gchenruifeng@163.com (R. F. Chen), jiner120@163.com (J. Mao)
 }
\end{center}


\begin{abstract}
In this paper, by mainly using the rearrangement technique and
suitably constructing trial functions, under the constraint of fixed
weighted volume, we can successfully obtain several isoperimetric
inequalities for the first and the second Dirichlet eigenvalues, the
first nonzero Neumann eigenvalue of the Witten-Laplacian on bounded
domains in space forms. These spectral isoperimetric inequalities
extend those classical ones (i.e. the Faber-Krahn inequality, the
Hong-Krahn-Szeg\H{o} inequality and the Szeg\H{o}-Weinberger
inequality) of the Laplacian.
 \end{abstract}


\section{Introduction}  \label{s1}
\renewcommand{\thesection}{\arabic{section}}
\renewcommand{\theequation}{\thesection.\arabic{equation}}
\setcounter{equation}{0}

The study of extremum problems of prescribed functionals is of great
significance in Mathematics. For instance, a famous isoperimetric
problem, which might be known for nearly all the mathematicians, in
the $n$-dimensional ($n\geq2$) Euclidean space $\mathbb{R}^n$ is to
study the following extremum problem:
 \begin{eqnarray} \label{1-1}
 \min\left\{|\partial\Omega|_{n-1}\Big{|}|\Omega|_{n}=const.\right\}
\end{eqnarray}
for bounded domains $\Omega\subset\mathbb{R}^n$ with smooth boundary
$\partial\Omega$, where, by the abuse of notations, $|\cdot|$ stands
for the Hausdorff measure of a given geometric object, and
meanwhile, if necessary, we would put the information of dimension
as the subscript of the notation $|\cdot|$ as well. The above
extremum problem can be asked in another way as follows:
\begin{itemize}
\item \emph{Among all bounded domains in $\mathbb{R}^n$ with fixed volume, which one minimizes the area functional of the boundary?}
\end{itemize}
This classical problem has been answered completely and one knows
that the unique minimizer of the area functional should be a ball
with the volume equal to $|\Omega|_{n}=const.$ -- see, e.g.,
\cite[Chapter 1]{RS} for an interesting derivation of classical
isoperimetric inequalities in Euclidean space by using the Schwarz
symmetrization. In fact, for any bounded domain $\Omega$ in
$\mathbb{R}^n$ with smooth boundary, one has:
 \begin{eqnarray} \label{1-2}
 \frac{|\partial\Omega|^{n}}{|\Omega|^{n-1}}\geq\frac{|\mathbb{S}^{n-1}|^n}{|\mathbb{B}^{n}|^{n-1}},
 \end{eqnarray}
with equality holding if and only if $\Omega$ is a Euclidean ball.
Obviously, the RHS of (\ref{1-2}) is independent of the choice of
radius for the Euclidean $n$-ball $\mathbb{B}^n$ and the
corresponding Euclidean $(n-1)$-sphere $\mathbb{S}^{n-1}$. That is
to say, the quantity $|\mathbb{S}^{n-1}|^{n}/|\mathbb{B}^{n}|^{n-1}$
is scale invariant.
 So, for
convenience and simplification, we denote by $\mathbb{B}^n$,
$\mathbb{S}^{n-1}$ the unit Euclidean $n$-ball and the unit
Euclidean $(n-1)$-sphere, respectively. By (\ref{1-2}), one easily
knows that:
\begin{itemize}
\item Among all bounded domains in $\mathbb{R}^n$ having the same
volume, Euclidean balls minimize the boundary area.

\item Among all bounded domains in $\mathbb{R}^n$ having the same
boundary area, Euclidean balls maximize the volume.
\end{itemize}
Clearly, (\ref{1-2}) gives the answer to the problem (\ref{1-1})
completely -- for a ball $B_{\Omega}$ with
$|B_{\Omega}|_{n}=|\Omega|_{n}=const.$, it follows
that\footnote{~Clearly, $\partial B_{\Omega}$ stands for the
boundary sphere of the ball $B_{\Omega}$.}
\begin{eqnarray} \label{1-3}
 |\partial\Omega|_{n-1}\geq|\partial B_{\Omega}|_{n-1},
\end{eqnarray}
with equality holding if and only if $\Omega$ is a ball in
$\mathbb{R}^n$ (which is congruent with $B_{\Omega}$). Following the
convention in \cite{IC}, we wish to call (\ref{1-2})-(\ref{1-3}) the
\emph{geometric isoperimetric inequalities}.

 The purpose of this
paper is to investigate isoperimetric inequalities from the
viewpoint of spectral quantities of the Witten-Laplacian. However,
in order to state our conclusions clearly, we wish to first recall
several classical results on the Laplacian.

Let $(M^{n},\langle\cdot,\cdot\rangle)$ be an $n$-dimensional
($n\geq2$) complete Riemannian manifold with the metric
$g:=\langle\cdot,\cdot\rangle$. Let $\Omega\subset M^{n}$ be a
bounded domain in $M^n$ with smooth\footnote{~The smoothness
assumption for the regularity of the boundary $\partial\Omega$ is
strong enough to consider the eigenvalue problems (\ref{eigen-D1})
and (\ref{eigen-N1}). For instance, a weaker regularity assumption
that $\partial\Omega$ is Lipschitz continuous can also assure the
validity about the description of discrete spectrum of the Neumann
eigenvalue problem (\ref{eigen-N1}) of the Laplacian on the fourth
page of this paper. However, the Lipschitz continuous assumption
might not be enough to consider some other geometric problems
involved Neumann eigenvalues of (\ref{eigen-N1}). Therefore, in
order to avoid putting too much attention on discussion for the
regularity of the boundary $\partial\Omega$ (which is also not
important for the topic investigated in our paper here), without
specification, we wish to assume that $\partial\Omega$ is smooth in
this paper. This setting leads to the situation that some
conclusions of this paper may still hold under a weaker regularity
assumption for the boundary $\partial\Omega$, readers who are
interested in this situation could try to seek the weakest
regularity. } boundary $\partial\Omega$. Denote by $\Delta$,
$\nabla$ the Laplace and the gradient operators on $M^n$ associated
with the metric $g$, respectively. On $\Omega$, one can consider the
Dirichlet eigenvalue problem of the Laplacian as follows
\begin{eqnarray} \label{eigen-D1}
\left\{
\begin{array}{ll}
\Delta u+\lambda u=0\qquad & \mathrm{in}~\Omega\subset M^{n}, \\[0.5mm]
u=0 \qquad & \mathrm{on}~{\partial\Omega},
\end{array}
\right.
\end{eqnarray}
which is also known as the \emph{fixed membrane problem} of the
Laplacian. In fact, for the eigenvalue problem (\ref{eigen-D1}),
when $M^{n}$ is chosen to be $\mathbb{R}^3$, this system can be used
to describe the vibration of a membrane with boundary fixed, and
this is the reason why it is called fixed membrane problem. Because
of this physical background, eigenvalues of a prescribed eigenvalue
problem of some self-adjoint differentiable elliptic operator are
called \emph{frequencies}. It is well-known that the operator
$-\Delta$ in (\ref{eigen-D1}) only has a discrete spectrum and all
the elements (i.e., eigenvalues) can be listed non-decreasingly as
follows
 \begin{eqnarray} \label{multi-1}
0<\lambda_{1}(\Omega)<\lambda_{2}(\Omega)\leq\lambda_{3}(\Omega)\leq\cdots\uparrow\infty.
 \end{eqnarray}
For each eigenvalue $\lambda_{i}(\Omega)$, $i=1,2,\cdots$, all the
nontrivial functions satisfying (\ref{eigen-D1}) form a vector
space, which  has finite dimension and is called eigenspace of
$\lambda_{i}(\Omega)$. Moreover, all the elements in this eigenspace
are called eigenfunctions belonging to $\lambda_{i}(\Omega)$. The
dimension of this eigenspace is called multiplicity of the
eigenvalue $\lambda_{i}(\Omega)$. Each eigenvalue
$\lambda_{i}(\Omega)$ in the sequence (\ref{multi-1}) is repeated
according to its multiplicity. By variational principle, the $k$-th
Dirichlet eigenvalue $\lambda_{k}(\Omega)$ is characterized as
follows
 \begin{eqnarray*}
\lambda_{k}(\Omega)=\inf\left\{\frac{\int_{\Omega}|\nabla
f|^{2}dv}{\int_{\Omega}f^{2}dv}\Bigg{|}f\in
W^{1,2}_{0}(\Omega),f\neq0,\int_{\Omega}ff_{i}dv=0\right\},
 \end{eqnarray*}
where $dv$ denotes the Riemannian volume element of $M^n$, and
$f_{i}$, $i=1,2,\cdots,k-1$, denotes an eigenfunction of
$\lambda_{i}(\Omega)$. Here, as usual, $W^{1,2}_{0}(\Omega)$ stands
for a Sobolev space, which is the completion of the set of smooth
functions (with compact support) $C^{\infty}_{0}(\Omega)$ under the
following Sobolev norm
 \begin{eqnarray} \label{Sobo}
\|f\|_{1,2}:=\left(\int_{\Omega}f^{2}dv+\int_{\Omega}|\nabla
f|^{2}dv\right)^{1/2}.
 \end{eqnarray}
See, e.g., \cite{IC} for the above fundamental facts of the
eigenvalue problem (\ref{eigen-D1}). Besides, for convenience and
without confusion, in the sequel, except specification we will write
$\lambda_{i}(\Omega)$ as $\lambda_{i}$ directly. This convention
would be also used when we meet with other possible eigenvalue
problems.

Similar to (\ref{1-1}), for bounded domains
$\Omega\subset\mathbb{R}^n$ with smooth boundary $\partial\Omega$,
$n\geq2$, it should be interesting and important to ask the
following extremum problem:
 \begin{eqnarray} \label{extre-1}
 \min\left\{\lambda_{k}(\Omega)\Big{|}|\Omega|_{n}=const.\right\}
\end{eqnarray}
for each $k=1,2,3,\cdots$. In fact, (\ref{extre-1}) is a natural and
classical isoperimetric problem in the study of Spectral Geometry.
To the best of our knowledge, for $k=1,2$, there exist affirmative
answers to the problem (\ref{extre-1}) as follows:
\begin{itemize}
\item  (Faber-Krahn inequality, \cite{GF,EK1})
$\lambda_{1}(\Omega)\geq\lambda_{1}(B_{\Omega})$, and the equality
holds if and only if $\Omega$ is a ball in $\mathbb{R}^n$ (which is
congruent with $B_{\Omega}$,
$|B_{\Omega}|_{n}=|\Omega|_{n}=const.$). That is to say, among all
bounded domains in $\mathbb{R}^n$ having the same volume, Euclidean
balls minimize the first Dirichlet eigenvalue of the Laplacian.

\item (Hong-Krahn-Szeg\H{o} inequality, \cite{IH,EK2})
$\lambda_{2}(\Omega)\geq\lambda_{1}(\widetilde{B}_{\Omega})$, where
$\widetilde{B}_{\Omega}$ is a ball in $\mathbb{R}^n$ such that
$2|\widetilde{B}_{\Omega}|_{n}=const.=|\Omega|_{n}$. That is to say,
the minimum of the second Dirichlet eigenvalue of the Laplacian on
bounded domains $\Omega$ (whose volume equals some prescribed
positive constant) should be equal to the first Dirichlet eigenvalue
of the Laplacian on a ball $\widetilde{B}_{\Omega}$ with
$|\widetilde{B}_{\Omega}|_{n}=|\Omega|_{n}/2$.
\end{itemize}
Hong-Krahn-Szeg\H{o} inequality implies that under the constraint
that the volume of bounded domains is fixed, the second Dirichlet
eigenvalue (of the Laplacian) is minimized by two balls of the same
volume. However, if one additionally requires that $\Omega$ is
connected, then under the constraint of volume fixed
($|\Omega|_{n}=const.$), this minimizer of $\lambda_{2}(\Omega)$
cannot be attained but can be approximated by the domain
$\Omega_{\epsilon}$, obtained by joining the union of the two
congruent balls (whose volumes equal $|\Omega|_{n}/2$) by a thin
pipe of width $\epsilon$ (sufficiently small) -- see \cite{AH1} for
the precise description of this interesting example and see, e.g.,
\cite{BZ,BDM} for the strict proof of this approximation (as
$\epsilon\rightarrow0$).  In two dimensional case, it has long been
conjectured that the ball minimizes $\lambda_{3}(\Omega)$, but there
did not have much progress in this direction. For higher order
Dirichlet eigenvalues, not much is known. However, there is an
interesting result we wish to mention, that is, Berger \cite{AB}
proved that for planar bounded domain $\Omega\subset\mathbb{R}^2$,
the $i$-th ($i>4$) Dirichlet eigenvalue $\lambda_{i}(\Omega)$ is not
minimized by any union of disks.

For a bounded domain $\Omega$ (with smooth boundary) on a given
complete Riemannian $n$-manifold $M^n$, one can also consider the
Neumann eigenvalue problem of the Laplacian as follows
\begin{eqnarray} \label{eigen-N1}
\left\{
\begin{array}{ll}
\Delta u+\mu u=0\qquad & \mathrm{in}~\Omega\subset M^{n}, \\[0.5mm]
\frac{\partial u}{\partial\vec{\nu}}=0 \qquad &
\mathrm{on}~{\partial\Omega},
\end{array}
\right.
\end{eqnarray}
which is also known as the \emph{free membrane problem} of the
Laplacian. Here, $\vec{\nu}$ stands for the outward unit normal
vector of the boundary $\partial\Omega$. In fact, for the eigenvalue
problem (\ref{eigen-N1}), when $M^{n}$ is chosen to be
$\mathbb{R}^3$, this system can be used to describe the vibration of
a membrane with free boundary, and this is the reason why it is
called free membrane problem. It is well-known that the operator
$-\Delta$ in (\ref{eigen-N1}) only has a discrete spectrum and all
the eigenvalues can be listed non-decreasingly as follows
 \begin{eqnarray} \label{multi-2}
0=\mu_{0}(\Omega)<\mu_{1}(\Omega)\leq\mu_{2}(\Omega)\leq\cdots\uparrow\infty.
 \end{eqnarray}
The eigenvalue $\mu_{0}(\Omega)=0$ has nonzero constant functions as
its eigenfunctions. Each eigenvalue $\mu_{i}(\Omega)$ in the
sequence (\ref{multi-2}) is repeated according to its multiplicity
(which is finite and actually equals the dimension of
$\mu_{i}(\Omega)$'s eigenspace). By variational principle, the
$k$-th nonzero Neumann eigenvalue $\mu_{k}(\Omega)$ is characterized
as follows
\begin{eqnarray*}
\mu_{k}(\Omega)=\inf\left\{\frac{\int_{\Omega}|\nabla
f|^{2}dv}{\int_{\Omega}f^{2}dv}\Bigg{|}f\in
W^{1,2}(\Omega),f\neq0,\int_{\Omega}ff_{i}dv=0\right\},
 \end{eqnarray*}
where $f_{i}$, $i=0,1,\cdots,k-1$, denotes an eigenfunction of
$\mu_{i}(\Omega)$. Here, as usual, $W^{1,2}(\Omega)$ denotes a
Sobolev space which is the completion of the set of smooth functions
$C^{\infty}(\Omega)$ under the Sobolev norm $\|\cdot\|_{1,2}$
defined by (\ref{Sobo}).

Similar to (\ref{extre-1}), for bounded domains
$\Omega\subset\mathbb{R}^n$ with smooth boundary $\partial\Omega$,
$n\geq2$, the following extremum problem
 \begin{eqnarray} \label{extre-2}
 \max\left\{\mu_{k}(\Omega)\Big{|}|\Omega|_{n}=const.\right\}
\end{eqnarray}
can be asked for each $k=1,2,3,\cdots$. To the best of our
knowledge, for $k=1,2$, there exist affirmative answers to the
problem (\ref{extre-2}) as follows:
\begin{itemize}
\item (Szeg\H{o}-Weinberger
inequality, \cite{GS,HFW}) $\mu_{1}(\Omega)\leq\mu_{1}(B_{\Omega})$,
and the equality holds if and only if $\Omega$ is a ball in
$\mathbb{R}^n$ (which is congruent with $B_{\Omega}$,
$|B_{\Omega}|_{n}=|\Omega|_{n}=const.$). That is to say, among all
bounded domains in $\mathbb{R}^n$ having the same volume, Euclidean
balls maximize the first nonzero Neumann eigenvalue of the
Laplacian.

\item (Bucur-Henrot \cite{BH}) Let $\Omega\subset\mathbb{R}^n$ be a bounded open set such that the Sobolev
space $W^{1,2}(\Omega)$ is compactly embedded\footnote{~In fact, the
regularity that $\partial\Omega$ is Lipschitz continuous is
sufficient such that $W^{1,2}(\Omega)$ is compactly embedded in
$L^{2}(\Omega)$. Therefore, the smoothness assumption for the
boundary $\partial\Omega$ is much enough to investigate the maximum
of $\mu_{2}(\Omega)$ under the constraint of fixed volume.} in
$L^{2}(\Omega)$. Then
 \begin{eqnarray} \label{phy-1}
 |\Omega|_{n}^{2/n}\mu_{2}(\Omega)\leq2^{2/n}|B|_{n}^{2/n}\mu_{1}(B),
 \end{eqnarray}
  where $B$ is any ball in $\mathbb{R}^n$. If equality in (\ref{phy-1}) occurs, then $\Omega$ coincides a.e. with the union of two disjoint, equal
  balls. Clearly, the quantity $2^{2/n}|B|^{2/n}\mu_{1}(B)$ is scale
  invariant. Using (\ref{phy-1}) directly,  one has
  $\mu_{2}(\Omega)\leq2^{2/n}\mu_{1}(B_{\Omega})$, with a ball
  $B_{\Omega}$ satisfying
  $|B_{\Omega}|_{n}=|\Omega|_{n}=const.$, which gives an affirmative answer
  to the problem (\ref{extre-2}) for $k=2$.
\end{itemize}
For higher order ($k\geq3$) Neumann eigenvalues, not much is known.
However, recent years, some works have shown numerical approaches
which propose candidates for the optimizers for Dirichlet/Neumann
eigenvalues of the Laplacian and related spectral problems, and
which also suggest conjectures about their qualitative properties --
see, e.g., \cite{AF,BB,OB1} for details.

As mentioned above, in some situation, the eigenvalue problems
(\ref{eigen-D1}) and (\ref{eigen-N1})  have physical backgrounds,
and hence eigenvalues in discrete spectrum are called frequencies.
So, sometimes, spectral isoperimetric inequalities introduced above
are also called \emph{physical isoperimetric inequalities}. There is
also one more thing we wish to say here, that is, spectral
isoperimetric inequalities mentioned above hold may not only in
Euclidean spaces but also some curved spaces -- for instance, at
least one also has the Faber-Krahn inequality in hyperbolic spaces
and spheres. In fact, a more general version of Faber-Krahn
inequality says that (see, e.g., \cite[Chapter IV]{IC}):
\begin{itemize}
\item Let $\mathbb{M}^{n}(\kappa)$ be the complete, simply connected, $n$-dimensional ($n\geq2$) space form of constant sectional curvature
$\kappa$, and let $\mathbb{D}$ denote a geodesic disk in
$\mathbb{M}^{n}(\kappa)$. For a complete Riemannian $n$-manifold
$M^{n}$, $n\geq2$, and each open set $\Omega$, consisting of a
finite disjoint union of regular\footnote{~Here, following the
convention in \cite{IC}, ``\emph{regular}" means that the domain
considered has compact closure and smooth boundary, while the word
``\emph{normal}" also in this statement means that the domain
considered has compact closure and piecewise smooth boundary.}
domains in $M^{n}$, and satisfying
 \begin{eqnarray} \label{volI}
 |\Omega|_{n}=|\mathbb{D}|_{n}.
 \end{eqnarray}
  (If $\kappa>0$, then only consider those $\Omega$ for which
  $|\Omega|_{n}<|\mathbb{M}^{n}(\kappa)|_{n}$.) If, for all such $\Omega$ in
  $M^{n}$, equality (\ref{volI})  implies the geometric isoperimetric inequality
    \begin{eqnarray} \label{areaI}
  |\partial\Omega|_{n-1}\geq|\partial\mathbb{D}|_{n-1},
    \end{eqnarray}
    with equality in (\ref{areaI}) if and only if $\Omega$ is
    isometric to $\mathbb{D}$, then we also have, for
every normal domain $\Omega$ in $M^n$, that equality (\ref{volI})
implies the inequality
 \begin{eqnarray} \label{specI}
 \lambda_{1}(\Omega)\geq\lambda_{1}(\mathbb{D}),
 \end{eqnarray}
   with equality in (\ref{specI}) if and only if $\Omega$ is
    isometric to $\mathbb{D}$.
\end{itemize}
This fact can be simply summarized as ``\emph{under the constraint
of volume fixed, the geometric isoperimetric inequality
(\ref{areaI}) would imply the physical isoperimetric inequality
(\ref{specI})}". It is known that in space forms, (\ref{areaI})
holds once $|\Omega|_{n}=|\mathbb{D}|_{n}$. Hence, in space forms,
one has the physical isoperimetric inequality (\ref{specI}) under
the volume constraint (\ref{volI}). From this example, one might
have a recognition that geometric isoperimetric inequalities have a
close relation with physical isoperimetric inequalities (of
differential operators). A natural question is ``\emph{except space
forms, whether one could find other spaces on which the geometric
isoperimetric inequality (\ref{areaI}) holds under the volume
constraint (\ref{volI})?}". One might refer to \cite[Chapter IV]{IC}
for some interesting progresses on this question.

In the sequel, we will show a way to extend the Faber-Krahn
inequality, the Hong-Krahn-Szeg\H{o} inequality and the
Szeg\H{o}-Weinberger inequality of the Laplacian to the case of the
Witten-Laplacian.

For a given complete Riemannian $n$-manifold ($n\geq2$) with the
metric $g$, let $\Omega\subset M^{n}$ be a bounded
 domain (with boundary $\partial\Omega$) in $M^n$, and $\phi\in C^{\infty}(M^n)$ be a
smooth\footnote{~In fact, one might see that $\phi\in C^{2}(\Omega)$
is suitable to derive our main conclusions in this paper. However,
in order to avoid putting too much attention on discussion for the
regularity of the boundary $\partial\Omega$, and following the
assumption on conformal factor $e^{-\phi}$ for the notion of
\emph{smooth metric measure spaces} in many literatures (including
of course those cited in this paper), without specification, we wish
to assume that $\phi$ is smooth on the domain $\Omega$. }
real-valued function defined on $\Omega$. In this setting, one can
define the following elliptic operator
\begin{eqnarray*}
\Delta_{\phi}:=\Delta-\langle\nabla\phi,\nabla\cdot\rangle
\end{eqnarray*}
on $\Omega$, which is called the \emph{Witten-Laplacian} (also
called the \emph{drifting Laplacian} or the \emph{weighted
Laplacian}) w.r.t. the metric $g$. Consider the Dirichlet eigenvalue
problem of the Witten-Laplacian as follows
\begin{eqnarray} \label{eigen-D11}
\left\{
\begin{array}{ll}
\Delta_{\phi}u+\lambda u=0\qquad & \mathrm{in}~\Omega\subset M^{n}, \\[0.5mm]
u=0 \qquad & \mathrm{on}~{\partial\Omega},
\end{array}
\right.
\end{eqnarray}
and it is not hard to check that the operator $\Delta_{\phi}$ in
(\ref{eigen-D11}) is \textbf{self-adjoint} w.r.t. the following
inner product
\begin{eqnarray}  \label{inn-p}
(h_{1},h_{2})_{\phi}:=\int_{\Omega}h_{1}h_{2}d\eta=\int_{\Omega}h_{1}h_{2}e^{-\phi}dv,
\end{eqnarray}
with $h_{1},h_{2}\in W^{1,2}_{0,\phi}(\Omega)$,  where
$d\eta:=e^{-\phi}dv$ is the weighted measure, and
$W^{1,2}_{0,\phi}(\Omega)$ stands for a Sobolev space, which is the
completion of the set of smooth functions (with compact support)
$C^{\infty}_{0}(\Omega)$ under the following Sobolev norm
\begin{eqnarray}  \label{Sobo-1}
\|f\|^{\phi}_{1,2}:=\left(\int_{\Omega}f^{2}e^{-\phi}dv+\int_{\Omega}|\nabla
f|^{2}e^{-\phi}dv\right)^{1/2}=\left(\int_{\Omega}f^{2}d\eta+\int_{\Omega}|\nabla
f|^{2}d\eta\right)^{1/2}.
\end{eqnarray}
Then using similar arguments to those of the classical fixed
membrane problem of the Laplacian (i.e., the discussions about the
existence of discrete spectrum, Rayleigh's theorem, Max-min theorem,
etc. Those discussions are standard, and for details, please see for
instance \cite{IC}), it is not hard to know:
\begin{itemize}
\item The self-adjoint elliptic operator $-\Delta_{\phi}$ in
(\ref{eigen-D11}) \emph{only} has discrete spectrum, and all the
 eigenvalues in this discrete spectrum can be listed
non-decreasingly as follows
\begin{eqnarray} \label{sequence-3}
0<\lambda_{1,\phi}(\Omega)<\lambda_{2,\phi}(\Omega)\leq\lambda_{3,\phi}(\Omega)\leq\cdots\uparrow+\infty.
\end{eqnarray}
Each eigenvalue $\lambda_{i,\phi}$, $i=1,2,\cdots$, in the sequence
(\ref{sequence-3}) was repeated according to its multiplicity (which
is finite and equals to the dimension of the eigenspace of
$\lambda_{i,\phi}$). By applying the standard variational
principles, one can obtain that the $k$-th Dirichlet eigenvalue
$\lambda_{k,\phi}(\Omega)$ can be characterized as follows
 \begin{eqnarray}  \label{chr-1}
 \lambda_{k,\phi}(\Omega)=\inf\left\{\frac{\int_{\Omega}|\nabla f|^{2}e^{-\phi}dv}{\int_{\Omega}f^{2}e^{-\phi}dv}
 \Bigg{|}f\in W^{1,2}_{0,\phi}(\Omega),f\neq0,\int_{\Omega}ff_{i}e^{-\phi}dv=0\right\},
 \end{eqnarray}
where $f_{i}$, $i=1,2,\cdots,k-1$, denotes an eigenfunction of
$\lambda_{i,\phi}(\Omega)$. Moreover, the first Dirichlet eigenvalue
$\lambda_{1,\phi}(\Omega)$ of the eigenvalue problem
(\ref{eigen-D11}) satisfies
\begin{eqnarray}  \label{chr-2}
 \lambda_{1,\phi}(\Omega)=\inf\left\{\frac{\int_{\Omega}|\nabla f|^{2}d\eta}{\int_{\Omega}f^{2}d\eta}
 \Bigg{|}f\in W^{1,2}_{0,\phi}(\Omega),f\neq0\right\}.
 \end{eqnarray}
\end{itemize}
It is interesting and important to study spectral geometric problems
related to the Witten-Laplacian -- we refer to
\cite[Introduction]{CM1} for a detailed explanation. We already have
some interesting works about spectral estimates and geometric
functional inequalities related to the Witten-Laplacian -- see,
e.g., \cite{DMWW,LMWZ,JM1,JM2,MTZ,YWMD}.

On $\Omega$, one can also define a notion \emph{weighted volume} (or
$\phi$-volume) as follows:
\begin{eqnarray*}
|\Omega|_{n,\phi}:=\int_{\Omega}d\eta=\int_{\Omega}e^{-\phi}dv.
\end{eqnarray*}
Using the constraint of fixed weighted volume, we can obtain several
spectral isoperimetric inequalities for the first and the second
Dirichlet eigenvalues of the Witten-Laplacian. However, in order to
state our conclusions clearly, we need to impose an assumption on
the function $\phi$ as follows:
\begin{itemize}
\item (\textbf{Property 1}) Furthermore, $\phi$ is a function
of the Riemannian distance parameter $t:=d(o,\cdot)$ for some point
$o\in M^{n}$.
\end{itemize}
Clearly, if a given open Riemannian $n$-manifold $(M^{n},g)$ was
endowed with the weighted density $e^{-\phi}dv$, where $\phi$
satisfies \textbf{Property 1}, then $\phi$ would be a
\emph{\textbf{radial}} function defined on $M^{n}$ w.r.t. the radial
distance $t$, $t\in[0,\infty)$. Especially, when the given open
$n$-manifold is chosen to be $\mathbb{R}^{n}$ or $\mathbb{H}^{n}$
(i.e., the $n$-dimensional hyperbolic space of sectional curvature
$-1$), we additionally require that $o$ is the origin  of
$\mathbb{R}^{n}$ or $\mathbb{H}^{n}$.

First, we have the following Faber-Krahn type inequality for the
Witten-Laplcian in the Euclidean space.

\begin{theorem} \label{theo-1}
Assume that the function $\phi$ satisfies \textbf{Property 1} (with
$M^{n}$ chosen to be $\mathbb{R}^n$) and is concave. Let $\Omega$ be
a bounded domain with smooth boundary in
 $\mathbb{R}^n$, and let $B_{R}(o)$ be a ball of radius $R$ and centered at the
 origin $o$
 of
 $\mathbb{R}^{n}$ such that $|\Omega|_{n,\phi}=|B_{R}(o)|_{n,\phi}$,
 i.e. $\int_{\Omega}d\eta=\int_{B_{R}(o)}d\eta$. Then
 \begin{eqnarray*}
\lambda_{1,\phi}(\Omega)\geq\lambda_{1,\phi}(B_{R}(o)),
 \end{eqnarray*}
and the equality holds if and only if (up to measure zero) $\Omega$
is the ball $B_{R}(o)$, which lies entirely in the region
$B_{\mathcal{R}(h)}$ defined by (\ref{UIR-D}).
\end{theorem}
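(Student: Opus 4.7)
The plan is to mimic the classical proof of the Faber-Krahn inequality via Schwarz symmetrization, but performed with respect to the weighted measure $d\eta=e^{-\phi}dv$. First I would let $u>0$ be a (positive) first Dirichlet eigenfunction of the Witten-Laplacian on $\Omega$, so that by (\ref{chr-2})
$$
\lambda_{1,\phi}(\Omega)=\frac{\int_{\Omega}|\nabla u|^{2}e^{-\phi}dv}{\int_{\Omega}u^{2}e^{-\phi}dv}.
$$
Then I would introduce the weighted symmetric decreasing rearrangement $u^{*}$ on $B_{R}(o)$, required to be radial and non-increasing and to satisfy $|\{u^{*}>t\}|_{\phi}=|\{u>t\}|_{\phi}$ for every $t>0$. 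Because $u$ vanishes on $\partial\Omega$ and $|\Omega|_{\phi}=|B_{R}(o)|_{\phi}$, the rearrangement $u^{*}$ lies in $\widetilde{W}^{1,2}_{0}(B_{R}(o))$ and hence is admissible in the Rayleigh quotient (\ref{chr-2}) for the ball.

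Once the rearrangement is in place, equimeasurability with respect to $d\eta$ immediately gives $\int_{B_{R}(o)}(u^{*})^{2}d\eta=\int_{\Omega}u^{2}d\eta$, so the theorem is reduced to the weighted Pólya-Szegő inequality
$$
\int_{B_{R}(o)}|\nabla u^{*}|^{2}d\eta\;\le\;\int_{\Omega}|\nabla u|^{2}d\eta.
$$
I would establish this in the usual way by applying the co-area formula together with Cauchy-Schwarz on each level set $\{u=t\}$:
$$
|\partial\{u>t\}|_{\phi}^{2}\;\le\;\Bigl(\int_{\{u=t\}}|\nabla u|e^{-\phi}d\sigma\Bigr)\Bigl(-\frac{d}{dt}|\{u>t\}|_{\phi}\Bigr),
$$
which becomes an equality for $u^{*}$ since its level sets are concentric spheres on which $|\nabla u^{*}|$ is constant. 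Integrating in $t$ and comparing then exchanges a gain in the Dirichlet energy for a loss in the weighted perimeter of the level sets.

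The key missing ingredient — and the main obstacle — is the \emph{weighted isoperimetric inequality}: for every smooth set $E\subset\mathbb{R}^{n}$ with $|E|_{\phi}$ equal to the $\phi$-volume of a centered ball $B_{r}(o)$ one needs
$$
|\partial E|_{\phi}\;\ge\;|\partial B_{r}(o)|_{\phi},
$$
applied to $E=\{u>t\}$ and $B_{r}(o)=\{u^{*}>t\}$. This is precisely where the radiality of $\phi$ (\textbf{Property 1}) and its concavity should enter: these hypotheses are used to calibrate the weighted perimeter functional by the radial profile $e^{-\phi(t)}t^{n-1}$ and to show that centered balls, so long as they are contained in the distinguished region $B_{\mathcal{R}(h)}$ introduced in (\ref{UIR-D}), are the unique minimizers of the weighted perimeter under a weighted volume constraint. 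Concavity of $\phi$ is what makes this calibration monotone in the right direction and simultaneously forces the rigidity statement: the co-area/Cauchy-Schwarz step is saturated only when every level set $\{u>t\}$ is itself a centered ball lying inside $B_{\mathcal{R}(h)}$, which forces $\Omega=B_{R}(o)$ (up to measure zero).

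Combining the weighted Pólya-Szegő inequality with the variational characterization (\ref{chr-2}) for the ball, namely $\lambda_{1,\phi}(B_{R}(o))\le\int|\nabla u^{*}|^{2}d\eta/\int(u^{*})^{2}d\eta$, finishes the proof. The only nontrivial analytic input beyond the now-standard weighted rearrangement machinery is the weighted isoperimetric inequality, so I would allocate most of the work to proving that lemma under the concavity hypothesis and to identifying the obstruction that produces the extra region $B_{\mathcal{R}(h)}$ in the equality statement.
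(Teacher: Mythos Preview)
Your approach is essentially identical to the paper's: weighted Schwarz rearrangement of a positive first eigenfunction, equimeasurability for the denominator, and a co-area/Cauchy--Schwarz argument for the numerator hinging on the weighted isoperimetric inequality for the log-convex density $e^{-\phi}$. The one substantive difference is that the paper does \emph{not} prove the weighted isoperimetric inequality---it invokes Chambers' resolution of the log-convex density conjecture (\textbf{Fact~A} and \textbf{Fact~B} in Remark~\ref{remark1-2}) as a black box, so you should cite \cite{GRC} rather than attempt to establish it yourself; that result is deep and is precisely where the concavity of $\phi$ (i.e.\ log-convexity of $e^{-\phi}$) and the region $B_{\mathcal{R}(h)}$ enter.
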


\begin{remark} \label{remark1-2}
\rm{ (1) Unlike the Neumann case described in Theorems \ref{theo-7}
and \ref{theo-8} below, for the Dirichlet case we do not need to
require that the point $o$ locates in the convex hull of the domain
$\Omega$ in Theorem \ref{theo-1}. The same situation also happens in
Theorem \ref{theo-2}.
 \\
(2) From the previous introduction on the Faber-Krahn inequality of
the Laplacian, one knows that under the volume constraint
(\ref{volI}), the geometric isoperimetric inequality (\ref{areaI})
makes an important role in the derivation process. What about the
Witten-Laplacian case? Does some weighted geometric isoperimetric
inequality play an important role also? The answer is affirmative.
We would like to recall a recent breakthrough of Chambers \cite{GRC}
to the Log-Convex Density Conjecture. Given a positive function $h$
in $\mathbb{R}^n$, $n\geq2$, one can define the weighted perimeter
and weighted volume of a set $A\subset\mathbb{R}^n$ of locally
finite perimeter as
 \begin{eqnarray*}
 \mathrm{Per}(A)=\int_{\partial A}hd\mathcal{H}^{n-1}, \qquad
 \mathrm{Vol}(A)=\int_{A}hd\mathcal{H}^{n},
 \end{eqnarray*}
where following the usage of notations in \cite{GRC},
$\mathcal{H}^m$ indicates the $m$-dimensional Hausdorff measure, and
$\partial A$ denotes the essential boundary of $A$. Such positive
function $h$ is called a \emph{density} on $\mathbb{R}^n$. If one
fixes a positive weighted volume $m>0$, does there exist a set
$A\subset\mathbb{R}^n$ such that $\mathrm{Vol}(A)=m$ and
 \begin{eqnarray*}
 \mathrm{Per}(A)=\inf\limits_{Q\subset\mathbb{R}^{n},\mathrm{Vol}(Q)=m}\mathrm{Per}(Q)?
 \end{eqnarray*}
Rosales, Ca\~{n}ete, Bayle and Morgan considered this problem and
gave a partial answer that in $\mathbb{R}^n$ with the density
$e^{c|x|^2}$, $c>0$, round balls about the origin uniquely minimize
perimeter for given volume (see \cite[Theorem 5.2]{RCBM}). Moreover,
they showed that for any radial, smooth density $h=e^{f(|x|)}$,
balls around the origin are stable\footnote{~Here ``\emph{stable}"
means that $\mathrm{Per}''(0)\geq0$ under smooth, volume-conserving
variations.} if and only $f$ is convex (\cite[Theorem 3.10]{RCBM}).
This fact motivates the following conjecture (3.12 in their
article), first stated by Kenneth Brakke:
\begin{itemize}
\item (Log-Convex Density Conjecture) \emph{In $\mathbb{R}^n$ with a smooth, radial, log-convex\footnote{~Clearly, for a density $h$ here, the log-convex assumption means $(\log h)''
\geq0$.} density, balls around the origin provide isoperimetric
regions of any given volume.}
\end{itemize}
Chambers \cite[Theorem 1.1]{GRC} gave an answer to the above
conjecture as follows:
 \begin{itemize}
 \item (\textbf{Fact A}) \emph{Given a density $h(x)=e^{f(|x|)}$ on $\mathbb{R}^n$ with $f$ smooth, convex and even, balls around the origin are isoperimetric regions with
respect to weighted perimeter and volume.}
 \end{itemize}
Moreover, Chambers \cite[Theorem 1.2]{GRC} characterized the
uniqueness of  isoperimetric regions as follows:
 \begin{itemize}
 \item (\textbf{Fact B})  \emph{Up to sets of measure $0$, the only
isoperimetric regions are balls centered at the origin, and balls
that lie entirely in
\begin{eqnarray} \label{UIR-D}
B_{\mathcal{R}(h)}=\{x\big{|}|x|\leq\mathcal{R}(h)\},
\end{eqnarray}
where $\mathcal{R}(h)=\sup\{|x|\big{|}h(x)=h(0)\}$.}
 \end{itemize}
\textbf{Fact A} and \textbf{Fact B} would make an important role in
the proof of Theorem \ref{theo-1} -- see Subsection \ref{s2-1} for
details. \\
 (3) Since Chambers' weighted geometric
isoperimetric inequality in $\mathbb{R}^n$ (i.e. \textbf{Fact A})
makes an important role in the proof of Theorem \ref{theo-1}, which
implies that similar to the potential precondition of \cite[Theorem
1.1]{GRC}, we also need to require that the boundary
$\partial\Omega$ has finite area (or following the convention in
\cite{GRC}, ``\emph{perimeter}") here. However, we think this
setting is so natural when considering the isoperimetric problems,
we wish not to
 list it out individually in every statement of our main conclusions
 in this paper. But, of course, $\partial\Omega$ should have this
 natural setting throughout the paper, which we do not mention again anymore.
 }
\end{remark}

We can prove the following:

\begin{theorem} \label{theo-2}
Let $\mathbb{S}^{n}_{+}$ be an $n$-dimensional hemisphere of radius
$1$, and let $\Omega\subset\mathbb{S}^{n}_{+}$ be a bounded domain
whose boundary $\partial\Omega$ has positive constant mean
curvature. Assume that the function $\phi$ satisfies
\textbf{Property 1} (with $M^{n}$ chosen to be $\mathbb{S}^{n}_{+}$)
and moreover $\phi=-\log\cos t$, where the point $o$ mentioned in
\textbf{Property 1} should additionally be required to be the base
point of $\mathbb{S}^{n}_{+}$. Then
 \begin{eqnarray*}
\lambda_{1,\phi}(\Omega)\geq\lambda_{1,\phi}(B_{R}(o)),
 \end{eqnarray*}
  where $B_{R}(o)$ denotes a geodesic ball of radius $R$ and centered at the
 base point $o$
 of
 $\mathbb{S}^{n}_{+}$ such that $|\Omega|_{n,\phi}=|B_{R}(o)|_{n,\phi}$.
 The equality holds if and only if $\Omega$ is isometric to the geodesic
 ball $B_{R}(o)$.
\end{theorem}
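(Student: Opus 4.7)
The plan is to mirror the weighted Faber-Krahn argument of Theorem \ref{theo-1}, replacing Chambers' Euclidean isoperimetric input (\textbf{Facts A} and \textbf{B}) by an appropriate weighted isoperimetric inequality on $\mathbb{S}^{n}_{+}$ with density $e^{-\phi}=\cos t$. The constant mean curvature hypothesis is used at the outset to make this replacement clean: since $\partial\Omega$ is a closed, embedded hypersurface of positive constant (unweighted) mean curvature in the hemisphere, Alexandrov's theorem in space forms (in the form of Montiel-Ros) forces $\partial\Omega$ to be a round geodesic sphere, and hence $\Omega=B_{r}(p)$ is itself a geodesic ball for some $p\in\mathbb{S}^{n}_{+}$ and $r\in(0,\pi/2)$. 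The theorem is thereby reduced to the comparison $\lambda_{1,\phi}(B_{r}(p))\geq\lambda_{1,\phi}(B_{R}(o))$ under the weighted-volume constraint $|B_{r}(p)|_{\phi}=|B_{R}(o)|_{\phi}$, with equality iff $p=o$.

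Having made this reduction, I would take a positive first Dirichlet eigenfunction $u$ of $-\Delta_{\phi}$ on $\Omega=B_{r}(p)$ and form its weighted symmetric decreasing rearrangement $u^{\ast}:B_{R}(o)\to\mathbb{R}$ with respect to $d\eta=\cos t\,dv$: for each $s\geq 0$, set $\Omega(s):=\{u>s\}$, $\mu(s):=|\Omega(s)|_{\phi}$, choose $r(s)\in[0,R]$ with $|B_{r(s)}(o)|_{\phi}=\mu(s)$, and define $u^{\ast}$ as the radially decreasing function characterized by $\{u^{\ast}>s\}=B_{r(s)}(o)$. The layer-cake representation gives equimeasurability,
\begin{eqnarray*}
\int_{B_{R}(o)}(u^{\ast})^{2}\,d\eta=\int_{\Omega}u^{2}\,d\eta,
\end{eqnarray*}
and the weighted coarea formula combined with the Cauchy-Schwarz inequality on each smooth level of $u$ reduces the P\'olya-Szeg\H{o} comparison $\int_{B_{R}(o)}|\nabla u^{\ast}|^{2}d\eta\leq\int_{\Omega}|\nabla u|^{2}d\eta$ to the weighted isoperimetric estimate
\begin{eqnarray*}
\int_{\partial\Omega(s)}e^{-\phi}\,d\mathcal{H}^{n-1}\geq\int_{\partial B_{r(s)}(o)}e^{-\phi}\,d\mathcal{H}^{n-1}\qquad\text{for a.e.\ regular }s.
\end{eqnarray*}

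The main obstacle is this weighted isoperimetric inequality on $(\mathbb{S}^{n}_{+},\cos t\,dv)$, the hemispherical counterpart of \textbf{Fact A}. A direct calculation using $\phi'(t)=\tan t$ and $\phi''(t)=\sec^{2}t$ shows that the $(n+1)$-Bakry-\'Emery Ricci tensor of this weighted hemisphere is identically $n\,g$, so the structure is curvature-dimension comparable to the unweighted sphere $\mathbb{S}^{n+1}$; a natural route is to combine this observation with a Heintze-Karcher-type integral identity adapted to the warped-product structure of $\mathbb{S}^{n}_{+}$ to obtain both the desired comparison and the rigidity that balls centered at $o$ are the unique minimizers.

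Plugging both rearrangement inequalities into the Rayleigh-quotient characterization (\ref{chr-2}) then yields
\begin{eqnarray*}
\lambda_{1,\phi}(B_{R}(o))\leq\frac{\int_{B_{R}(o)}|\nabla u^{\ast}|^{2}\,d\eta}{\int_{B_{R}(o)}(u^{\ast})^{2}\,d\eta}\leq\frac{\int_{\Omega}|\nabla u|^{2}\,d\eta}{\int_{\Omega}u^{2}\,d\eta}=\lambda_{1,\phi}(\Omega),
\end{eqnarray*}
and the rigidity in the isoperimetric step propagates through the levels so that equality forces $\Omega$ to be isometric to $B_{R}(o)$.
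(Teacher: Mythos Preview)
Your overall rearrangement skeleton (coarea formula, Cauchy--Schwarz on level sets, equimeasurability of weighted $L^{2}$ norms, then the Rayleigh-quotient comparison) matches the paper's exactly. The substantive divergence is in the weighted isoperimetric input. The paper does not invoke Alexandrov's theorem at all; instead it cites the Minkowski-type inequality of Brendle--Hung--Wang on the hemisphere (Lemma~\ref{lemma2-3}): for $D\subset\mathbb{S}^{n}_{+}$ with boundary mean curvature $H>0$,
\[
\int_{\partial D}\frac{\cos t}{H}\,dA\;\geq\; n\int_{D}\cos t\,dv,
\]
with equality iff $D$ is isometric to a geodesic ball. The constant-mean-curvature hypothesis is invoked precisely so that the factor $1/H$ can be pulled out and this becomes the perimeter comparison $\int_{G(s)}\cos t\geq\int_{G^{\ast}(s)}\cos t$ that the paper uses on each level surface. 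Your Alexandrov reduction is a correct observation, but it does not touch the level sets $\Omega(s)$ of the eigenfunction, which are generically neither geodesic spheres nor CMC; so reducing $\Omega$ itself to a ball $B_{r}(p)$ does not simplify the core step.

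That core step---the weighted isoperimetric inequality on $(\mathbb{S}^{n}_{+},\cos t\,dv)$ asserting that centered geodesic balls minimize weighted perimeter for given weighted volume---is exactly where your proposal is incomplete. You compute the $(n{+}1)$-Bakry--\'Emery Ricci correctly and point toward a Heintze--Karcher/curvature-dimension argument, but you do not carry it out, and the statement is not available off the shelf in that form. If that route could be made rigorous it would actually be stronger than what the theorem asserts, since it would render the CMC hypothesis on $\partial\Omega$ superfluous; but as written it is the gap in your argument. The paper's substitute for this missing step is Lemma~\ref{lemma2-3}, and that is the concrete tool you should look at.
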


\begin{remark}
\rm{ (1) When investigating the above Faber-Krahn type isoperimetric
inequality, there is no essential difference between
$\mathbb{S}^{n}_{+}$ and
 a hemisphere with radius not equal to $1$. \\
 (2) In order to let readers who might not know the concept ``\emph{the base point}" clearly, we wish to give an explanation here. It is better to
 start the explanation with spherically symmetric manifolds, which is also called generalized space forms (suggested in the work of Katz-Kondo \cite{KK}). We refer
 readers to \cite{FMI,M1,MDW} for a detailed description about the accurate definition, the basic properties and some interesting
 applications of spherically symmetric manifolds. The corresponding
 author has used spherically symmetric manifolds as the model space
 to derive some interesting comparison theorems (for volume, eigenvalues of different types, heat kernel, and some other geometric quantities) -- see, e.g.,
 \cite{FMI,JM4,JM3,YWMD}. In fact, one has:
 \begin{itemize}
 \item (\cite[Definition 2.1]{FMI}) For a given complete $n$-manifold $M^{n}$, a domain $\mathcal{D}=\exp_{p}([0,l)\times S_{p}^{n-1})\subset
 M^{n}\setminus Cut(p)$, with $l<inj(p)$, is said
to be spherically symmetric with respect to a point
$p\in\mathcal{D}$, if and only if the matrix $\mathbb{A}(t,\xi)$
satisfies $\mathbb{A}(t,\xi)=f(t)I$, for a function $f\in
C^{2}([0,l))$ with $f(0)=0$, $f'(0)=1$ and $f|_{(0,l)}>0$.
 \end{itemize}
 Here $S_{p}^{n-1}$ denotes the unit sphere of the tangent space $T_{p}M^{n}$, $Cut(p)$ stands for the cut-locus of the point $p$, $inj(p)$
 denotes the injectivity radius at $p$, $\xi\in S_{p}^{n-1}$, and $\mathbb{A}(t,\xi):\xi^{\perp}\rightarrow\xi^{\perp}$ is the path of linear transformations well-defined
  in \cite[Section 2]{FMI}. A standard model for spherically
  symmetric manifolds is given by the quotient of the warped product
  $[0,l)\times_{f}\mathbb{S}^{n-1}$ with the metric
   \begin{eqnarray*}
 ds^{2}=dt^{2}+f^{2}(t)|d\xi|^2,\qquad \forall \xi\in
 S^{n-1}_{p},~0<t<l,
   \end{eqnarray*}
    where usually $|d\xi|^2$ denotes the round metric of the unit
    $(n-1)$-sphere $\mathbb{S}^{n-1}$. In this model, all pairs
    $(0,\xi)$ are identified with the single point $p$, which is
    called \emph{the base point} of the spherically symmetric domain
    $\mathcal{D}=[0,l)\times_{f}\mathbb{S}^{n-1}$. Clearly, as already revealed  in (2.12) of \cite{FMI}, a space form with constant sectional curvature $\kappa$ is
also a spherically symmetric manifold and in this particular
situation the warping function $f$ satisfies
  \begin{eqnarray*}
f(t)=\left\{
\begin{array}{lll}
\frac{\sin(\sqrt{\kappa}t)}{\sqrt{\kappa}},\qquad & l=\frac{\pi}{\sqrt{\kappa}}~~\quad\kappa>0,  \\[0.5mm]
t, \qquad & l=+\infty \quad\kappa=0,  \\[0.5mm]
\frac{\sinh(\sqrt{-\kappa}t)}{\sqrt{-\kappa}},\qquad &
l=+\infty\quad\kappa<0.
\end{array}
\right.
  \end{eqnarray*}
 (3) Since $o$ is required to be the base point of
 $\mathbb{S}^{n}_{+}$, then for the domain
 $\Omega\subset\mathbb{S}^{n}_{+}$ in Theorem \ref{theo-2}, the range of the Riemannian distance parameter
 $t=d(o,\cdot)$ should be $(0,\pi/2)$, which implies that the choice of
 the function $\phi=-\log\cos t$ makes sense. Besides, in fact,
 $\mathbb{S}^{n}_{+}$ can be modeled as $[0,\pi/2]\times_{\sin
 t}\mathbb{S}^{n-1}$ with the metric $dt^{2}+(\sin
 t)^{2}|d\xi|^{2}$, and its base point $o$ should be the vertex of
 $\mathbb{S}^{n}_{+}$.
}
\end{remark}

We can also get the following:

\begin{theorem} \label{theo-3}
 Assume that
the function $\phi$ satisfies \textbf{Property 1} (with $M^{n}$
chosen to be $\mathbb{H}^{n}$) and is strictly concave,
 where the point $o$
mentioned in \textbf{Property 1} should additionally be required to
be the origin of $\mathbb{H}^{n}$. Let $\Omega\subset\mathbb{H}^{n}$
be a bounded domain with boundary. Then
 \begin{eqnarray*}
\lambda_{1,\phi}(\Omega)\geq\lambda_{1,\phi}(B_{R}(o)),
 \end{eqnarray*}
  where $B_{R}(o)$ denotes a geodesic ball of radius $R$ and centered at the
 origin $o$
 of
 $\mathbb{H}^{n}$ such that $|\Omega|_{n,\phi}=|B_{R}(o)|_{n,\phi}$.
 The equality holds if and only if $\Omega$ is isometric to the geodesic
 ball $B_{R}(o)$.
\end{theorem}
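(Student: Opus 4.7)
The plan is to adapt the weighted Schwarz symmetrization strategy used for Theorem \ref{theo-1} to the hyperbolic setting. Three essential ingredients drive the argument: a weighted geometric isoperimetric inequality in $\mathbb{H}^n$ for the density $e^{-\phi}$, the weighted co-area formula, and the resulting P\'olya--Szeg\H{o}-type inequality for the Sobolev space $\widetilde{W}^{1,2}_{0}$.

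\textbf{Step 1 (weighted isoperimetric inequality in $\mathbb{H}^n$).} I would first establish that under the hypothesis that $\phi$ is a radial strictly concave function on $\mathbb{H}^n$ with pole at the origin $o$, the density $h=e^{-\phi}$ is strictly radial log-convex, and an $\mathbb{H}^n$-analogue of \textbf{Fact A}--\textbf{Fact B} holds: among bounded sets $D\subset\mathbb{H}^n$ with $|D|_{\phi}$ prescribed, the weighted perimeter $\int_{\partial D}e^{-\phi}d\mathcal{H}^{n-1}$ is minimized, and uniquely so up to sets of measure zero, by geodesic balls centered at $o$. The \emph{strict} concavity of $\phi$ (as opposed to only concave in Theorem \ref{theo-1}) eliminates the analog of the plateau region $B_{\mathcal{R}(h)}$ in (\ref{UIR-D}), which explains why the hyperbolic statement does not need a side condition on the location of $B_R(o)$.

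\textbf{Step 2 (weighted rearrangement).} Let $u\geq0$ be a first Dirichlet eigenfunction of $\Delta_\phi$ on $\Omega$, so $\int_\Omega|\nabla u|^2 d\eta/\int_\Omega u^2 d\eta=\lambda_{1,\phi}(\Omega)$. For each $t>0$ set $\Omega_t=\{u>t\}$ and let $r(t)$ be the unique radius with $|B_{r(t)}(o)|_\phi=|\Omega_t|_\phi$ (existence follows from continuity and monotonicity of $r\mapsto|B_r(o)|_\phi$). Define the weighted radial decreasing rearrangement
\[
u^*(x)=\sup\{t\geq 0:d(o,x)<r(t)\},\qquad x\in B_R(o),
\]
which belongs to $\widetilde{W}^{1,2}_{0}(B_R(o))$ and by construction is equi-measurable with $u$ w.r.t.\ $d\eta$, so $\int_\Omega u^2 d\eta=\int_{B_R(o)}(u^*)^2 d\eta$.

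\textbf{Step 3 (P\'olya--Szeg\H{o}).} The weighted co-area formula gives, for a.e.\ $t$, $-\tfrac{d}{dt}|\Omega_t|_\phi=\int_{\{u=t\}}|\nabla u|^{-1}d\mu_\phi$ with $d\mu_\phi=e^{-\phi}d\mathcal{H}^{n-1}$, and similarly for $u^*$. Applying Cauchy--Schwarz
\[
\Bigl(\int_{\{u=t\}}d\mu_\phi\Bigr)^{\!2}\leq\Bigl(\int_{\{u=t\}}|\nabla u|\,d\mu_\phi\Bigr)\Bigl(\int_{\{u=t\}}|\nabla u|^{-1}d\mu_\phi\Bigr),
\]
together with the weighted isoperimetric inequality from Step 1 (noting $|\Omega_t|_\phi=|B_{r(t)}(o)|_\phi$), one obtains the pointwise bound $\int_{\{u=t\}}|\nabla u|\,d\mu_\phi\geq\int_{\{u^*=t\}}|\nabla u^*|\,d\mu_\phi$, and integration in $t$ via the co-area formula yields
\[
\int_\Omega|\nabla u|^2\,d\eta\;\geq\;\int_{B_R(o)}|\nabla u^*|^2\,d\eta.
\]

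\textbf{Step 4 (conclusion and rigidity).} Combining Step 2 and Step 3 with the Rayleigh characterization (\ref{chr-2}) applied on $B_R(o)$,
\[
\lambda_{1,\phi}(B_R(o))\;\leq\;\frac{\int_{B_R(o)}|\nabla u^*|^2 d\eta}{\int_{B_R(o)}(u^*)^2 d\eta}\;\leq\;\frac{\int_\Omega|\nabla u|^2 d\eta}{\int_\Omega u^2 d\eta}\;=\;\lambda_{1,\phi}(\Omega).
\]
If equality holds, then the weighted isoperimetric inequality must be saturated at almost every level $t$; by the uniqueness part of Step 1 (made sharp by strict concavity of $\phi$), each superlevel set $\Omega_t$ is a geodesic ball centered at $o$, and letting $t\downarrow 0$ forces $\Omega$ to be isometric to $B_R(o)$.

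\textbf{Main obstacle.} The only genuinely non-routine ingredient is the weighted isoperimetric inequality in $\mathbb{H}^n$ of Step 1. Chambers' theorem provides the Euclidean version directly for Theorem \ref{theo-1}; its hyperbolic analog for radial log-convex densities is the point where real work is needed, either by citing an extension of Chambers' result to model spaces (available in the manifolds-with-density literature) or by a direct argument exploiting both the positive mean curvature of geodesic spheres in $\mathbb{H}^n$ and the strict log-convexity of $e^{-\phi}(t)\sinh^{n-1}(t)$ at $t=d(o,\cdot)$. Once Step 1 is in hand, the remaining steps are the standard Faber--Krahn scheme transplanted to the weighted Sobolev space $\widetilde{W}^{1,2}_{0}$.
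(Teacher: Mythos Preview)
Your proposal is correct and follows essentially the same route as the paper: Schwarz symmetrization with respect to the weighted measure, equimeasurability of $L^2$-norms, the co-area formula plus Cauchy--Schwarz, and then the variational characterization (\ref{chr-2}). The paper's proof is in fact just a one-paragraph pointer to the argument of Theorem~\ref{theo-1} with \textbf{Fact~A}/\textbf{Fact~B} replaced by \textbf{Fact~C} (Silini's hyperbolic log-convex density theorem \cite{LS}), which is precisely the ``extension of Chambers' result to model spaces'' you flagged as the main obstacle; so your Step~1 is not something to be proved but to be cited.
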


\begin{remark}
\rm{ (1) The hyperbolic space $\mathbb{H}^{n}$ can be modeled as
$[0,\infty)\times_{\sinh t}\mathbb{S}^{n-1}$ with the metric
 \begin{eqnarray*}
dt^{2}+(\sinh t)^{2}|d\xi|^{2}.
 \end{eqnarray*}
Since hyperbolic spaces are two-point homogenous, the base point of
$\mathbb{H}^n$ is not unique and any point of $\mathbb{H}^n$ can be
chosen as the base point, which is different with the case of
hemisphere $\mathbb{S}^{n}_{+}$. However, for $\mathbb{H}^n$ once
its globally defined coordinate system was set up, the origin $o$
would be determined uniquely w.r.t. this system. As shown above,  in
order to get the main conclusion in Theorem \ref{theo-3}, we need to
assume that $\phi$ is radial w.r.t. some fixed point and is also
concave, which leads to the situation that in the statement of
Theorem \ref{theo-3}, it is better to choose the point $o$ to be the
origin of $\mathbb{H}^n$ (might not the base point), and
correspondingly $\phi$ is concave w.r.t. the radial Riemannian
distance parameter $t=d(o,\cdot)$.
 \\
 (2) As mentioned before, one knows two facts: (a) under the constraint of
 fixed volume, the Faber-Krahn inequality for the first Dirichlet
 eigenvalue of the Laplacian also holds in hyperbolic spaces; (b)
 under the constraint of fixed weighted volume,
\textbf{Fact A} (i.e., a weighted geometric isoperimetric inequality
in  $\mathbb{R}^n$) makes an important role in the proof of the
Faber-Krahn type inequality for the Witten-Laplcian in
$\mathbb{R}^n$ (i.e. Theorem \ref{theo-1}). So, it is natural to
ask:
 \begin{itemize}
 \item \emph{Could one expect to get a hyperbolic version of \textbf{Fact A} which makes a contribution in the proof of Theorem \ref{theo-3}?}
 \end{itemize}
The answer is affirmative. In fact, Li-Xu \cite[Theorem 1.1]{LX}
obtained a partial result to the hyperbolic version of \textbf{Fact
A} for specified density through suitably applying Chambers' result
\cite{GRC} by projecting the hyperbolic space onto $\mathbb{R}^n$
and employing a comparison argument. Very recently, L. Silini
\cite{LS} solved the above question completely. For an arbitrary
base point $o\in \mathbb{H}^{n}$, and a density $h$ given by
$h:=e^{f(d(o,\cdot))}$, where $h:\mathbb{R}\rightarrow\mathbb{R}$ is
a smooth, (strictly) convex, even function, and, similar as before,
$d(o,\cdot)$ denotes the Riemannian distance to the point $o$ on
$\mathbb{H}^{n}$, one can define the weighted perimeter and weighted
volume of a set with finite perimeter $E\subset \mathbb{H}^{n}$ as
follows
\begin{eqnarray*}
 \mathrm{P}_{h}(E)=\int_{\partial^{\ast}E}hd\mathcal{H}^{n-1}, \qquad
 \mathrm{V}_{h}(E)=\int_{E}hd\mathcal{H}^{n},
 \end{eqnarray*}
where following the usage of notations in \cite{LS},
$\partial^{\ast}E$ denotes the reduced boundary of $E$, and
$\mathcal{H}^m$ indicates the $m$-dimensional Hausdorff measure.
Silini \cite[Theorem 1.1]{LS} proved the following:
\begin{itemize}
 \item (\textbf{Fact C}) \emph{For any strictly radially log-convex density $h$, geodesic balls centered
 at $o\in \mathbb{H}^{n}$ uniquely minimize the weighted perimeter for any given weighted volume with respect
to $P_{h}$ and $V_{h}$.}
\end{itemize}
\textbf{Fact C} would make an important role in the proof of Theorem
\ref{theo-3} -- see Section \ref{s3} for details. Using a comparison
argument between $H^{n}_{\mathbb{C}}=U(n,1)/U(n)$ (i.e. the
$n$-dimensional complex hyperbolic space of constant curvature $-1$)
and $\mathbb{H}^{2n}$, together with \textbf{Fact C}, Silini
\cite{LS} can get further:
 \begin{itemize}
 \item In $H^{n}_{\mathbb{C}}$,
geodesic balls are uniquely isoperimetric in the class of
Hopf-symmetric sets for all volumes.
 \end{itemize}
 This conclusion gives a partial answer to an open conjecture
 proposed by Gromov-Ros in \cite{GKPS} as follows:
 \begin{itemize}
\item (\textbf{Conjecture}) \emph{Geodesic balls are isoperimetric for all
volumes in the complex hyperbolic space $H^{n}_{\mathbb{C}}$.}
 \end{itemize}
Silini's above result on the isoperimetric problem for the class of
Hopf-symmetric sets in  $H^{n}_{\mathbb{C}}$ might inspire readers to try to extend the spectral isoperimetric inequality in
Theorem \ref{theo-3} to a more general space, which we think it is possible. However, due to the structure of this paper, here we just focus
on investigating spectral isoperimetric inequalities for the Witten-Laplacian on bounded domains in space forms.\\
 (3) As explained in \cite[Remark 1.7]{LS}, since technical difficulties
 arise from the presence of regions with constant weight, for simplicity it
was decided to  to assume the weight to be strictly log-convex
rather than simply log-convex in extending the proof of Brakke's
conjecture from the Euclidean space to the hyperbolic space. This is
the reason why in Theorem \ref{theo-3} we assume that the radial
function $\phi$ is strictly concave (i.e., $-(\log \phi)''>0$).
Besides, if the domain $\Omega$ has a constant weight (i.e., a
constant density), then the Witten-Laplacian degenerates into the
classical Laplacian, and correspondingly, in $\mathbb{H}^n$ one
naturally has the Faber-Krahn inequality for the first Dirichlet
eigenvalue. In this situation, it is no need to write down Theorem
\ref{theo-3} any more. Based on this truth, in Theorem \ref{theo-3}
it is acceptable to assume that the radial function $\phi$ is
strictly concave.
 }
\end{remark}

Inspired by the technique used in \cite{BBMP}, under other
assumptions on $\phi$ and the constraint of weighted volume fixed,
we can also get the following Faber-Krahn type inequality for the
Witten-Laplcian in the Euclidean space, which can be seen as a
complement to Theorem \ref{theo-1}.

\begin{theorem}  \label{theo-4}
Assume that the function $\phi$ satisfies \textbf{Property 1} (with
$M^{n}$ chosen to be $\mathbb{R}^n$), $\phi$ is monotone
non-increasing, and for $z\geq0$, the function
 \begin{eqnarray*}
\left(e^{-\phi(z^{\frac{1}{n}})}-e^{-\phi(0)}\right)z^{1-\frac{1}{n}}
 \end{eqnarray*}
is convex. Let $\Omega$ be a bounded domain with Lipschitz boundary
in $\mathbb{R}^n$, and let $B_{R}(o)$ be a ball of radius $R$ and
centered at the
 origin $o$
 of
 $\mathbb{R}^{n}$ such that $|\Omega|_{n,\phi}=|B_{R}(o)|_{n,\phi}$. Then
 \begin{eqnarray*}
\lambda_{1,\phi}(\Omega)\geq\lambda_{1,\phi}(B_{R}(o)).
 \end{eqnarray*}
\end{theorem}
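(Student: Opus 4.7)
The plan is to adapt the classical Schwarz symmetrization proof of the Faber--Krahn inequality to the weighted setting, following the technique of \cite{BBMP}. Let $u>0$ be a first Dirichlet eigenfunction of the Witten-Laplacian on $\Omega$, and denote its weighted distribution by $\tilde{\mu}(t):=|\{u>t\}|_\phi$. I would construct the weighted Schwarz rearrangement $u^\star$ on $B_R(o)$ as the unique radially non-increasing function satisfying $|\{u^\star>t\}|_\phi=\tilde{\mu}(t)$ for every $t\geq 0$. By weighted equimeasurability $\int_{B_R(o)}(u^\star)^2 e^{-\phi}\,dv=\int_\Omega u^2 e^{-\phi}\,dv$, so inserting $u^\star$ into the Rayleigh quotient (\ref{chr-2}) for $\lambda_{1,\phi}(B_R(o))$ reduces the theorem to proving the weighted P\'olya--Szeg\H{o} inequality
\begin{eqnarray*}
\int_{B_R(o)}|\nabla u^\star|^2 e^{-\phi}\,dv\leq\int_\Omega|\nabla u|^2 e^{-\phi}\,dv.
\end{eqnarray*}

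To establish this, I would combine the coarea formula with Cauchy--Schwarz. From $-\tilde{\mu}'(t)=\int_{\{u=t\}}|\nabla u|^{-1}e^{-\phi}\,d\mathcal{H}^{n-1}$ and $\int_\Omega|\nabla u|^2 e^{-\phi}\,dv=\int_0^\infty\!\int_{\{u=t\}}|\nabla u|e^{-\phi}\,d\mathcal{H}^{n-1}\,dt$, one derives the pointwise bound
$$\int_{\{u=t\}}|\nabla u|e^{-\phi}\,d\mathcal{H}^{n-1}\geq\frac{P_\phi(\{u>t\})^2}{-\tilde{\mu}'(t)},\qquad P_\phi(A):=\int_{\partial A}e^{-\phi}\,d\mathcal{H}^{n-1},$$
with equality for the radial profile $u^\star$ since $|\nabla u^\star|$ is constant on level spheres. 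Thus the P\'olya--Szeg\H{o} inequality reduces to the weighted isoperimetric statement
\begin{eqnarray*}
P_\phi(\{u>t\})\geq P_\phi(B_{r(t)}(o)),\qquad|B_{r(t)}(o)|_\phi=\tilde{\mu}(t),
\end{eqnarray*}
namely that balls centered at $o$ minimize the weighted perimeter among sets of prescribed weighted volume.

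Proving this weighted isoperimetric inequality is the key step, and this is where the hypotheses on $\phi$ enter decisively. I would use the decomposition
$$P_\phi(E)=e^{-\phi(0)}P(E)+\int_{\partial E}\bigl(e^{-\phi(|x|)}-e^{-\phi(0)}\bigr)\,d\mathcal{H}^{n-1},$$
estimate the first summand from below by the classical Euclidean isoperimetric inequality, and observe that for the centered ball $B_r(o)$ the second summand equals $n\omega_n F(|B_r|/\omega_n)$, where $F(z):=(e^{-\phi(z^{1/n})}-e^{-\phi(0)})z^{1-1/n}$ is precisely the function whose convexity is assumed. The non-increasing monotonicity of $\phi$ makes the shell integrand $e^{-\phi(|x|)}-e^{-\phi(0)}$ non-negative, and the convexity of $F$ then enables a Jensen-type comparison of the shell integral over $\partial E$ with its counterpart on the centered ball of matching weighted volume.

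The main obstacle is executing this last comparison rigorously. The convexity hypothesis is tailored exactly to drive the \cite{BBMP}-style argument, but turning it into $P_\phi(E)\geq P_\phi(B_{r^\ast}(o))$ requires handling the discrepancy between $|E|$ and $|B_{r^\ast}(o)|$ (since only the weighted volumes are matched) and coordinating the Euclidean isoperimetric deficit with the shell-integral deficit so that the non-increasing behaviour of $\phi$ prevents the two from cancelling. Once the weighted isoperimetric inequality is secured, feeding it into the coarea/Cauchy--Schwarz bound together with the variational characterization (\ref{chr-2}) closes the proof.
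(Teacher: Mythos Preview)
Your approach is essentially the same as the paper's: the paper's proof of this theorem consists of one sentence saying to repeat the coarea/Cauchy--Schwarz/symmetrization argument of Theorem~\ref{theo-1} verbatim, but with \textbf{Fact~A} and \textbf{Fact~B} replaced by the weighted isoperimetric inequality of \cite{BBMP} (applied with $a(t)=e^{-\phi(t)}$), which is invoked as a black box. Your proposal rederives exactly this structure and then goes further, sketching how the \cite{BBMP} inequality itself is proved via the decomposition $P_\phi(E)=e^{-\phi(0)}P(E)+\int_{\partial E}(e^{-\phi(|x|)}-e^{-\phi(0)})\,d\mathcal{H}^{n-1}$ and a convexity argument; the ``main obstacle'' you flag is precisely what \cite{BBMP} handles, so simply citing that result (as the paper does) closes the gap.
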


\begin{remark}
\rm{Since $\phi$ satisfies \textbf{Property 1} and moreover when
$M^n$ is chosen to be $\mathbb{R}^n$, we additionally require that
$o$ is the origin  of $\mathbb{R}^{n}$, so $o$ corresponds to $z=0$,
and then $\phi(0)$ is actually the value of the function $\phi$ at
the origin $o$.}
\end{remark}

For the second Dirichlet eigenvalue of the Witten-Laplacian, we can
obtain the following Hong-Krahn-Szeg\H{o} type inequalities.

\begin{theorem}  \label{theo-5}
Assume that the function $\phi$ satisfies \textbf{Property 1} (with
$M^{n}$ chosen to be $\mathbb{R}^n$) and is concave. Let $\Omega$ be
a bounded domain with smooth boundary in
 $\mathbb{R}^n$, and let $B_{\widetilde{R}}(o)$ be a ball of radius $\widetilde{R}$ and centered at the
 origin $o$
 of
 $\mathbb{R}^{n}$ such that $|\Omega|_{n,\phi}/2=|B_{\widetilde{R}}(o)|_{n,\phi}$,
 i.e. $\frac{1}{2}\int_{\Omega}d\eta=\int_{B_{\widetilde{R}}(o)}d\eta$. Then
 \begin{eqnarray*}
\lambda_{2,\phi}(\Omega)\geq\lambda_{1,\phi}(B_{\widetilde{R}}(o)).
 \end{eqnarray*}
 That is to say, under the assumptions for $\phi$ described above, the
minimum of the second Dirichlet eigenvalue of the Witten-Laplacian
on bounded domains $\Omega$ in $\mathbb{R}^n$, whose weighted volume
equals some prescribed positive constant, should be equal to the
first Dirichlet eigenvalue of the Witten-Laplacian on a ball
$B_{\widetilde{R}}(o)$ (of radius $\widetilde{R}$ and centered at
the
 origin $o\in\mathbb{R}^n$) such that
$|\Omega|_{n,\phi}/2=|B_{\widetilde{R}}(o)|_{n,\phi}$.
\end{theorem}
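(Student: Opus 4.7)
The plan is to adapt the classical proof of the Hong-Krahn-Szeg\H{o} inequality, replacing the Faber-Krahn inequality for the Laplacian with its Witten-Laplacian counterpart given by Theorem \ref{theo-1} (whose hypotheses on $\phi$ coincide with those assumed here). First I would pick an eigenfunction $u_{2}\in\widetilde{W}^{1,2}_{0}(\Omega)$ of $-\Delta_{\phi}$ corresponding to $\lambda_{2,\phi}(\Omega)$. Because $\lambda_{1,\phi}(\Omega)$ is simple with a strictly positive eigenfunction $u_{1}$ (by the strict inequality in (\ref{sequence-3})) and because $u_{2}$ is orthogonal to $u_{1}$ with respect to the weighted inner product (\ref{inn-p}), $u_{2}$ must change sign in $\Omega$. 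I would then partition
\begin{eqnarray*}
\Omega^{+}:=\{x\in\Omega\,|\,u_{2}(x)>0\},\qquad\Omega^{-}:=\{x\in\Omega\,|\,u_{2}(x)<0\},
\end{eqnarray*}
both of which are nonempty open subsets of $\Omega$.

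The next step is to verify that $\lambda_{1,\phi}(\Omega^{\pm})=\lambda_{2,\phi}(\Omega)$. Testing the eigenvalue equation $-\Delta_{\phi}u_{2}=\lambda_{2,\phi}(\Omega)u_{2}$ against the truncations $u_{2}^{\pm}:=\max\{\pm u_{2},0\}\in\widetilde{W}^{1,2}_{0}(\Omega)$ with respect to the weighted measure $d\eta=e^{-\phi}dv$ and integrating by parts yields
\begin{eqnarray*}
\int_{\Omega^{\pm}}|\nabla u_{2}|^{2}\,d\eta=\lambda_{2,\phi}(\Omega)\int_{\Omega^{\pm}}u_{2}^{2}\,d\eta,
\end{eqnarray*}
so $u_{2}^{\pm}$ realizes the Rayleigh quotient $\lambda_{2,\phi}(\Omega)$ in the variational characterization (\ref{chr-1}) of the first Dirichlet eigenvalue on $\Omega^{\pm}$. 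Because $u_{2}^{\pm}$ is strictly positive on each connected component of $\Omega^{\pm}$, it must be a first Dirichlet eigenfunction of $-\Delta_{\phi}$ on that component, forcing $\lambda_{1,\phi}(\Omega^{\pm})=\lambda_{2,\phi}(\Omega)$.

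Applying Theorem \ref{theo-1} separately to $\Omega^{+}$ and $\Omega^{-}$ I then obtain $\lambda_{1,\phi}(\Omega^{\pm})\geq\lambda_{1,\phi}(B^{\pm})$, where $B^{\pm}$ denotes the ball centered at the origin $o$ with weighted volume $|B^{\pm}|_{\phi}=|\Omega^{\pm}|_{\phi}$. Since $|\Omega^{+}|_{\phi}+|\Omega^{-}|_{\phi}=|\Omega|_{\phi}=2|B_{\widetilde{R}}(o)|_{\phi}$, at least one of these weighted volumes does not exceed $|B_{\widetilde{R}}(o)|_{\phi}$; without loss of generality, $|\Omega^{-}|_{\phi}\leq|B_{\widetilde{R}}(o)|_{\phi}$. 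Because the positive weight $e^{-\phi}$ makes the weighted volume of $o$-centered balls strictly increasing in the radius, this forces $B^{-}\subseteq B_{\widetilde{R}}(o)$. The characterization (\ref{chr-1}) is monotone non-increasing under domain inclusion (any admissible function on the smaller set extends by zero to an admissible function on the larger set), hence
\begin{eqnarray*}
\lambda_{2,\phi}(\Omega)=\lambda_{1,\phi}(\Omega^{-})\geq\lambda_{1,\phi}(B^{-})\geq\lambda_{1,\phi}(B_{\widetilde{R}}(o)),
\end{eqnarray*}
which is the claimed inequality.

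The main technical point I expect to have to address carefully is the regularity issue in invoking Theorem \ref{theo-1} on the nodal domains $\Omega^{\pm}$, whose boundaries are only as regular as the nodal set of $u_{2}$ rather than globally smooth. However, the proof of Theorem \ref{theo-1} reduces, via weighted Schwarz-type rearrangement of a $\widetilde{W}^{1,2}_{0}$-function, to Chambers' weighted isoperimetric inequality (\textbf{Fact A}), which is stated for sets of locally finite perimeter; since the nodal set of a non-trivial Dirichlet eigenfunction has locally finite $(n-1)$-Hausdorff measure, $\Omega^{\pm}$ has finite weighted perimeter and the Faber-Krahn estimate applies to $\Omega^{\pm}$ without change. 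Apart from this routine regularity verification, every remaining ingredient (sign-change and positivity of first eigenfunctions, weighted variational monotonicity under inclusion) is a direct transcription of the classical Hong-Krahn-Szeg\H{o} argument to the weighted setting.
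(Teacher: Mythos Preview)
Your proposal is correct and follows essentially the same route as the paper: split $\Omega$ into the two nodal domains of a second eigenfunction, identify $\lambda_{2,\phi}(\Omega)$ with the first Dirichlet eigenvalue on each, apply the Faber--Krahn inequality of Theorem~\ref{theo-1} to each piece, and finish with domain monotonicity. The only cosmetic differences are that the paper cites a nodal-domain lemma (Lemma~\ref{lemma3-1}) and a monotonicity lemma (Lemma~\ref{lemma3-3}) rather than deriving these inline, and it does not discuss the boundary regularity of the nodal domains that you (rightly) flag.
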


\begin{theorem}  \label{theo-6}
 Assume that the function $\phi$ satisfies \textbf{Property
1} (with $M^{n}$ chosen to be $\mathbb{H}^{n}$) and is strictly
concave,
 where the point $o$
mentioned in \textbf{Property 1} should additionally be required to
the origin of $\mathbb{H}^{n}$. Let $\Omega\subset\mathbb{H}^{n}$ be
a bounded domain with boundary. Then
 \begin{eqnarray*}
\lambda_{2,\phi}(\Omega)\geq\lambda_{1,\phi}(B_{\widetilde{R}}(o)),
 \end{eqnarray*}
  where $B_{\widetilde{R}}(o)$ denotes a geodesic ball of radius $\widetilde{R}$ and centered at the
 origin $o$
 of
 $\mathbb{H}^{n}$ such that
 $|\Omega|_{n,\phi}/2=|B_{\widetilde{R}}(o)|_{n,\phi}$. That is to say, under the assumptions for $\phi$ described above, the
minimum of the second Dirichlet eigenvalue of the Witten-Laplacian
on bounded domains $\Omega$ in $\mathbb{H}^n$, whose weighted volume
equals some prescribed positive constant, should be equal to the
first Dirichlet eigenvalue of the Witten-Laplacian on a geodesic
ball $B_{\widetilde{R}}(o)$ (of radius $\widetilde{R}$ and centered
at the
 origin $o\in\mathbb{H}^n$) such that
$|\Omega|_{n,\phi}/2=|B_{\widetilde{R}}(o)|_{n,\phi}$.
\end{theorem}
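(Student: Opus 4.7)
The plan is to adapt the classical Hong-Krahn-Szeg\H{o} argument to the weighted setting, using Theorem~\ref{theo-3} (the Faber-Krahn inequality for the Witten-Laplacian on $\mathbb{H}^n$) as the replacement for the classical Faber-Krahn inequality. Concretely, I would split a second Dirichlet eigenfunction across its nodal set into two first-eigenvalue problems on disjoint subdomains, apply Theorem~\ref{theo-3} to each piece, and then exploit the monotonicity of $\lambda_{1,\phi}(B_{R}(o))$ in $R$ together with the weighted-volume constraint.

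In detail, let $u$ be an eigenfunction associated with $\lambda_{2,\phi}(\Omega)$. Since $-\Delta_\phi$ is self-adjoint with respect to the inner product (\ref{inn-p}) and has the discrete spectrum (\ref{sequence-3}), Courant's nodal domain theorem (whose classical proof relies only on the min-max characterization (\ref{chr-1}) together with orthogonality to a first eigenfunction) carries over and shows that $u$ has exactly two nodal domains $\Omega_+=\{u>0\}$ and $\Omega_-=\{u<0\}$. Because $u|_{\Omega_\pm}$ does not change sign and vanishes on $\partial\Omega_\pm$, it is a first Dirichlet eigenfunction of $-\Delta_\phi$ on $\Omega_\pm$, so
\begin{equation*}
\lambda_{1,\phi}(\Omega_+)=\lambda_{1,\phi}(\Omega_-)=\lambda_{2,\phi}(\Omega).
\end{equation*}
Let $R_\pm>0$ be the radii determined by $|B_{R_\pm}(o)|_\phi=|\Omega_\pm|_\phi$; Theorem~\ref{theo-3} then gives $\lambda_{1,\phi}(\Omega_\pm)\geq\lambda_{1,\phi}(B_{R_\pm}(o))$. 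From $|\Omega_+|_\phi+|\Omega_-|_\phi=|\Omega|_\phi=2|B_{\widetilde{R}}(o)|_\phi$, at least one nodal domain has weighted volume at most $|\Omega|_\phi/2$; assuming without loss of generality $|\Omega_+|_\phi\leq|\Omega|_\phi/2$, this yields $R_+\leq\widetilde{R}$ and hence $B_{R_+}(o)\subset B_{\widetilde{R}}(o)$. Extending test functions by zero in (\ref{chr-1}) gives the domain monotonicity $\lambda_{1,\phi}(B_{R_+}(o))\geq\lambda_{1,\phi}(B_{\widetilde{R}}(o))$, and concatenating the inequalities produces the conclusion.

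The main obstacle I foresee is the legitimacy of applying Theorem~\ref{theo-3} to the nodal subdomains $\Omega_\pm$, which are only guaranteed to be open, bounded sets whose boundary (a portion of the nodal set of $u$) may contain a singular component. Standard elliptic nodal-set regularity, for instance work of Cheng and of Hardt--Simon, ensures that $\partial\Omega_\pm$ is a smooth $(n-1)$-submanifold away from a closed set of Hausdorff dimension at most $n-2$, which is sufficient for the weighted symmetrization/isoperimetric argument underlying Theorem~\ref{theo-3} to apply; alternatively one can approximate $\Omega_\pm$ from inside by smooth subdomains and pass to the limit in the Rayleigh quotient. Once this regularity issue is settled, the remainder of the proof is a direct weighted-volume bookkeeping.
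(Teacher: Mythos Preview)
Your argument is correct and follows essentially the same route as the paper: split a second eigenfunction along its nodal set into two first-eigenvalue problems, apply the Faber-Krahn inequality (Theorem~\ref{theo-3}) to each nodal domain, and conclude via domain monotonicity of $\lambda_{1,\phi}$ on centered balls. The paper packages the nodal-domain step and the monotonicity step as citations (Lemmas~\ref{lemma3-1} and~\ref{lemma3-3}) rather than arguing them directly, and does not discuss the regularity of the nodal set, but otherwise the proofs coincide.
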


For a bounded domain $\Omega$ (with boundary $\partial\Omega$) on a
given $n$-dimensional ($n\geq2$) complete Riemannian manifold $M^n$,
we can also consider the following Neumann eigenvalue problem of the
Witten-Laplacian
\begin{eqnarray} \label{eigen-N11}
\left\{
\begin{array}{ll}
\Delta_{\phi} u+\mu u=0\qquad & \mathrm{in}~\Omega\subset M^{n}, \\[0.5mm]
\frac{\partial u}{\partial\vec{\nu}}=0 \qquad &
\mathrm{on}~{\partial\Omega},
\end{array}
\right.
\end{eqnarray}
and it is easy to check that the operator $\Delta_{\phi}$ in
(\ref{eigen-N11}) is \textbf{self-adjoint} w.r.t. the inner product
(\ref{inn-p}) with $h_{1},h_{2}\in W_{\phi}^{1,2}(\Omega)$. Here
$W_{\phi}^{1,2}(\Omega)$ stands for a Sobolev space, which is the
completion of the set of smooth functions $C^{\infty}(\Omega)$ under
the Sobolev norm $\|\cdot\|^{\phi}_{1,2}$ defined by (\ref{Sobo-1}).
Then using similar arguments to those of the classical free membrane
problem of the Laplacian (see, e.g., \cite{IC}), it is not hard to
know:
 \begin{itemize}
\item  The operator $-\Delta_{\phi}$ in
(\ref{eigen-N11}) \emph{only} has discrete spectrum, and all the
 eigenvalues in this discrete spectrum can be listed
non-decreasingly as follows
\begin{eqnarray} \label{sequence-4}
0=\mu_{0,\phi}(\Omega)<\mu_{1,\phi}(\Omega)\leq\mu_{2,\phi}(\Omega)\leq\mu_{3,\phi}(\Omega)\leq\cdots\uparrow+\infty.
\end{eqnarray}
Each eigenvalue $\mu_{i,\phi}$, $i=0,1,2,\cdots$, in the sequence
(\ref{sequence-4}) is repeated according to its multiplicity (i.e.,
the dimension of the eigenspace of $\mu_{i,\phi}$). Specially, the
zero eigenvalue $\mu_{0,\phi}$ has multiplicity $1$ and has nonzero
constant function as its eigenfunction. By applying the standard
variational principles, one can obtain that the $k$-th Neumann
eigenvalue $\mu_{k,\phi}(\Omega)$ can be characterized as follows
 \begin{eqnarray}  \label{chr-3}
 \mu_{k,\phi}(\Omega)=\inf\left\{\frac{\int_{\Omega}|\nabla f|^{2}e^{-\phi}dv}{\int_{\Omega}f^{2}e^{-\phi}dv}
 \Bigg{|}f\in W_{\phi}^{1,2}(\Omega),f\neq0,\int_{\Omega}ff_{i}e^{-\phi}dv=0\right\},
 \end{eqnarray}
where $f_{i}$, $i=1,2,\cdots,k-1$, denotes an eigenfunction of
$\mu_{i,\phi}(\Omega)$. Moreover, the first nonzero Neumann
eigenvalue $\mu_{1,\phi}(\Omega)$ of the eigenvalue problem
(\ref{eigen-N11}) satisfies
\begin{eqnarray}  \label{chr-4}
 \mu_{1,\phi}(\Omega)=\inf\left\{\frac{\int_{\Omega}|\nabla f|^{2}d\eta}{\int_{\Omega}f^{2}d\eta}
 \Bigg{|}f\in W_{\phi}^{1,2}(\Omega),f\neq0,\int_{\Omega}fd\eta=0\right\}.
 \end{eqnarray}
 \end{itemize}
In fact, the above facts have been explained more clearly in
\cite[Section 1]{CM1}. Here we wish to keep writing down the above
content for two reasons: the one is for the completion of the brief
introduction to the eigenvalue problem (\ref{eigen-N11}) here; the
other one is that the characterization (\ref{chr-4}) would be used
to derive spectral isoperimetric inequalities for the first nonzero
Neumann eigenvalue $\mu_{1,\phi}(\cdot)$ below.

 We can prove the following
Szeg\H{o}-Weinberger type inequalities for the Witten-Laplacian.

\begin{theorem} \label{theo-7}
Let $\Omega$ be a bounded domain with smooth boundary in
 $\mathbb{R}^n$. Assume that the function $\phi$ satisfies \textbf{Property 1} (with
$M^{n}$ chosen to be $\mathbb{R}^n$ and additionally the point $o$
required to be in the convex hull of $\Omega$, i.e.
$o\in{\mathrm{hull}}(\Omega)$), and $\phi$ is also a non-increasing
convex function defined on $[0,\infty)$. Let $B_{R}(o)$ be a ball of
radius $R$ and centered at the
 origin $o$
 of
 $\mathbb{R}^{n}$ such that $|\Omega|_{n,\phi}=|B_{R}(o)|_{n,\phi}$,
 i.e. $\int_{\Omega}d\eta=\int_{B_{R}(o)}d\eta$.  Then
 \begin{eqnarray*}
\mu_{1,\phi}(\Omega)\leq\mu_{1,\phi}(B_{R}(o)),
\end{eqnarray*}
with equality holding if and only if $\Omega$ is the ball
$B_{R}(o)$.
\end{theorem}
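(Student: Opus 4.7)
The plan follows the classical Szegő–Weinberger scheme adapted to the radial weight $e^{-\phi(|x|)}$. First I would identify the first nonzero Neumann eigenfunction on the model ball $B_R(o)$. Because $\phi$ is radial, separation of variables in (\ref{eigen-N11}) shows that $\mu_{1,\phi}(B_R(o))$ has multiplicity (at least) $n$ and admits eigenfunctions of the form $\Psi_i(x) = G(|x|)\,x_i/|x|$, where $G:[0,R]\to\mathbb{R}$ solves the ODE
\begin{equation*}
G''(r)+\Bigl(\tfrac{n-1}{r}-\phi'(r)\Bigr)G'(r)+\Bigl(\mu_{1,\phi}(B_R(o))-\tfrac{n-1}{r^2}\Bigr)G(r)=0,
\end{equation*}
with $G(0)=0$ and $G'(R)=0$. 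Extend $G$ continuously to $[0,\infty)$ by setting $G(r)=G(R)$ for $r\ge R$, so that $G$ is non-decreasing and Lipschitz.

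Next I would use the extended $G$ to build trial functions $\Phi_i(x)=G(|x|)\,x_i/|x|$ on $\Omega$. To feed them into the variational characterization (\ref{chr-4}), I must arrange $\int_\Omega \Phi_i\,d\eta=0$ for each $i$. Although $o$ is pinned by the form of $\phi$, I would exploit the freedom of translating the center, i.e.\ consider the vector field $F:\mathrm{hull}(\Omega)\to\mathbb{R}^n$ with components $F_i(y)=\int_\Omega G(|x-y|)\,\frac{x_i-y_i}{|x-y|}\,e^{-\phi(|x-y|)}\,dv$, and apply a Brouwer fixed point/degree argument (showing that $F$ points inward on $\partial\,\mathrm{hull}(\Omega)$) to produce a zero $y^\ast\in\mathrm{hull}(\Omega)$; this $y^\ast$ is taken to be $o$. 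This is exactly the role of the hypothesis $o\in\mathrm{hull}(\Omega)$.

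With the orthogonality secured, summing the Rayleigh inequality over $i=1,\dots,n$ yields
\begin{equation*}
\mu_{1,\phi}(\Omega)\;\le\;\frac{\int_\Omega\bigl[G'(r)^2+(n-1)G(r)^2/r^2\bigr]e^{-\phi(r)}\,dv}{\int_\Omega G(r)^2 e^{-\phi(r)}\,dv}\;=:\;\frac{\int_\Omega B(r)\,d\eta}{\int_\Omega A(r)\,d\eta},
\end{equation*}
with $r=|x|$. The core analytic step is the monotonicity lemma: $B(r)-\mu_{1,\phi}(B_R(o))\,A(r)$ is non-increasing on $[0,\infty)$. On $(R,\infty)$ this is immediate since $G'\equiv 0$ and $(n-1)G(R)^2/r^2$ is decreasing; on $(0,R)$ one differentiates, substitutes the ODE for $G''$, and uses $\phi'\le 0$ together with the convexity of $\phi$ to absorb the sign. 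This is where I expect the main technical difficulty, and where the precise hypotheses on $\phi$ (non-increasing \emph{and} convex on $[0,\infty)$) get used.

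Finally, I would conclude by a bathtub/rearrangement argument. Since $|\Omega|_\phi=|B_R(o)|_\phi$, the weighted measures of $\Omega\setminus B_R(o)$ and $B_R(o)\setminus\Omega$ coincide; on the former $r\ge R$ while on the latter $r\le R$. Writing
\begin{equation*}
\int_\Omega\!\bigl(B-\mu_{1,\phi}(B_R(o))A\bigr)d\eta - \int_{B_R(o)}\!\bigl(B-\mu_{1,\phi}(B_R(o))A\bigr)d\eta
\end{equation*}
as a difference over $\Omega\setminus B_R(o)$ minus $B_R(o)\setminus\Omega$, the monotonicity lemma forces this quantity to be $\le 0$, and since the integral over $B_R(o)$ equals zero (the $\Psi_i$ are eigenfunctions there), we obtain $\int_\Omega B\,d\eta \le \mu_{1,\phi}(B_R(o))\int_\Omega A\,d\eta$, proving the inequality. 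The equality discussion proceeds by tracking equality in the bathtub step (which forces $|\Omega\triangle B_R(o)|=0$ unless $B-\mu_{1,\phi}(B_R(o))A$ is constant on the relevant range, ruled out by the strict decrease of $(n-1)G(R)^2/r^2$ on $r>R$) and in the Rayleigh step, yielding $\Omega=B_R(o)$.
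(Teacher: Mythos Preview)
Your plan is essentially the paper's own argument: Weinberger-type trial functions $G(|x|)x_i/|x|$ built from the radial part of the ball eigenfunction, constant extension of $G$ beyond $R$, a Brouwer step to secure orthogonality, monotonicity of the radial integrands, and a bathtub comparison against $B_R(o)$. The only organisational difference is that the paper treats numerator and denominator separately (showing $(h')^2+(n-1)h^2/t^2$ is non-increasing and $h^2$ is non-decreasing, then applying the bathtub lemma to each), whereas you bundle them into the single non-increasing quantity $B-\mu_{1,\phi}(B_R(o))\,A$; both routes work and yours is a harmless variant.

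One point to correct in your plan: the monotonicity of $B(r)$ (or of $B-\mu A$) on $(0,R)$ uses only $\phi'\le 0$, not convexity. After substituting the ODE for $G''$ you obtain
\[
\frac{d}{dr}\Bigl[(G')^2+\tfrac{(n-1)G^2}{r^2}\Bigr]
= 2\phi'(G')^2 - 2\mu_{1,\phi}(B_R(o))\,G G' - \frac{2(n-1)}{r^3}(rG'-G)^2,
\]
which is $\le 0$ once $\phi'\le 0$ and $G,G'\ge 0$. The convexity hypothesis $\phi''\ge 0$ is needed earlier, in the step you pass over lightly: establishing that the first nonzero Neumann eigenvalue on $B_R(o)$ really is carried by the $l=1$ spherical harmonics (i.e.\ that $\mu_{1,1,\phi}<\mu_{0,2,\phi}$ in Sturm--Liouville notation). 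The paper devotes its Appendix (Theorem~\ref{theo4-1}) to this, and the key inequality $(-p'/p)'-(n-1)/t^{2}\ge 0$ there is exactly where $\phi''\ge 0$ enters. So when you execute the plan, expect the ``main technical difficulty'' not in the monotonicity lemma but in justifying the form of the ball eigenfunction.
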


\begin{theorem}  \label{theo-8}
Let $\Omega$ be a bounded domain with smooth boundary in
 $\mathbb{H}^n$. Assume that the function $\phi$ satisfies \textbf{Property 1} (with
$M^{n}$ chosen to be $\mathbb{H}^n$ and additionally
$o\in{\mathrm{hull}}(\Omega)$), and $\phi$ is also a non-increasing
convex function defined on $[0,\infty)$. Let $B_{R}(o)$ be a
geodesic ball of radius $R$ and centered at the
 origin $o$
 of
 $\mathbb{H}^{n}$ such that $|\Omega|_{n,\phi}=|B_{R}(o)|_{n,\phi}$.  Then
 \begin{eqnarray*}
\mu_{1,\phi}(\Omega)\leq\mu_{1,\phi}(B_{R}(o)),
\end{eqnarray*}
with equality holding if and only if $\Omega$ is isometric to the
geodesic ball $B_{R}(o)$.
\end{theorem}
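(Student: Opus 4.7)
The plan is to run a weighted Szeg\H{o}-Weinberger argument on $\mathbb{H}^n$, parallel to the proof of Theorem \ref{theo-7}. Because $\phi$ is radial about $o$, separation of variables in geodesic polar coordinates $(t,\xi)\in[0,R]\times\mathbb{S}^{n-1}$ on $B_R(o)$ shows that the eigenspace associated with $\mu_{1,\phi}(B_R(o))$ is $n$-dimensional and spanned by product functions $G(t)\Theta_i(\xi)$ ($i=1,\ldots,n$), where $\Theta_i$ are the restrictions to $\mathbb{S}^{n-1}$ of the ambient coordinate functions and $G$ solves the weighted radial Sturm-Liouville problem
\begin{equation*}
\bigl(\sinh^{n-1}(t)\,e^{-\phi(t)}\,G'(t)\bigr)'+\Bigl(\mu_{1,\phi}(B_R(o))-\tfrac{n-1}{\sinh^{2}(t)}\Bigr)\sinh^{n-1}(t)\,e^{-\phi(t)}\,G(t)=0
\end{equation*}
on $(0,R)$ with $G(0)=0$, $G'(R)=0$, and $G>0$ on $(0,R]$.

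Next I extend $G$ to $[0,\infty)$ by $G(t)\equiv G(R)$ for $t\ge R$, and take as trial functions on $\Omega$
\begin{equation*}
\psi_{i}(x):=G(d(o,x))\,\Theta_{i}(\xi(x)),\qquad i=1,\ldots,n.
\end{equation*}
The hypothesis $o\in\mathrm{hull}(\Omega)$ enters at this stage: by a Brouwer fixed-point argument analogous to the one used in the Euclidean Theorem \ref{theo-7}, one can arrange that $\int_{\Omega}\psi_{i}\,d\eta=0$ for every $i$, making each $\psi_i$ admissible in the Rayleigh characterization (\ref{chr-4}). Applying (\ref{chr-4}) to each $\psi_i$, summing on $i$, and using the spherical identities $\sum_{i}\Theta_i^{2}\equiv1$ and $\sum_{i}|\nabla_{\mathbb{S}^{n-1}}\Theta_i|^{2}\equiv n-1$ reduces matters to the single radial inequality
\begin{equation*}
\mu_{1,\phi}(\Omega)\le\frac{\int_{\Omega}B(t)\,d\eta}{\int_{\Omega}A(t)\,d\eta},\qquad A(t):=G(t)^{2},\quad B(t):=G'(t)^{2}+\tfrac{n-1}{\sinh^{2}(t)}\,G(t)^{2},
\end{equation*}
while on the ball the same quotient equals $\mu_{1,\phi}(B_R(o))=:k$.

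The main obstacle is then the monotonicity lemma that $t\mapsto B(t)-kA(t)$ is non-increasing on $[0,\infty)$. For $t\ge R$ this reduces to the decrease of $(n-1)/\sinh^{2}(t)$; on $[0,R]$ one differentiates $B-kA$, substitutes the radial ODE, and uses that $\phi$ is convex and non-increasing (so $\phi'\le0$ and $\phi''\ge0$) together with the hyperbolic identity $\coth'(t)=-1/\sinh^{2}(t)\le0$ to control the sign. Granted this lemma, the hypothesis $|\Omega|_\phi=|B_R(o)|_\phi$ yields $|\Omega\setminus B_R(o)|_\phi=|B_R(o)\setminus\Omega|_\phi$, and a bathtub comparison
\begin{equation*}
\int_{\Omega\setminus B_R(o)}(B-kA)\,d\eta\ \le\ \bigl(B(R)-kA(R)\bigr)\,|\Omega\setminus B_R(o)|_\phi\ \le\ \int_{B_R(o)\setminus\Omega}(B-kA)\,d\eta
\end{equation*}
gives $\int_{\Omega}(B-kA)\,d\eta\le0$, which is the claimed inequality. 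Tracking the equality cases through the bathtub step forces $\Omega$ to agree with $B_R(o)$ up to a set of weighted measure zero, and hence, up to isometries of $\mathbb{H}^n$ fixing $o$, identifies $\Omega$ with $B_R(o)$.
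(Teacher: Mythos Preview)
Your proposal is correct and follows essentially the same Weinberger-type argument as the paper: the same radial eigenfunction $G$ on $B_R(o)$, the same constant extension past $R$, the Brouwer fixed-point centering, and the same bathtub comparison (Lemma~\ref{lemma4-4} in the paper). The only cosmetic difference is that the paper treats the numerator and denominator separately---showing $B(t)=(h')^{2}+(n-1)h^{2}/\sinh^{2}t$ is non-increasing and $A(t)=h^{2}$ is non-decreasing, then invoking Lemma~\ref{lemma4-4} twice---whereas you bundle these into the single monotonicity of $B-kA$; since $k>0$, the two formulations are equivalent.

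One small remark on where the hypotheses actually enter. In the monotonicity computation on $[0,R]$ (after substituting the radial ODE for $G''$ and using $\cosh t\ge 1$) only $\phi'\le0$ and $G,G'\ge0$ are needed; the convexity $\phi''\ge0$ does not appear there. Where convexity is genuinely used is in your first paragraph, namely in identifying the first nonzero eigenspace of $\Delta_\phi$ on $B_R(o)$ with the $l=1$ angular mode. This is Theorem~\ref{theo4-1} in the paper, and its proof (Section~\ref{s5}) compares $\mu_{1,1,\phi}$ with $\mu_{0,2,\phi}$ via a Sturm-type argument that requires $(-p'/p)'-(n-1)S_\kappa^{-2}\ge0$, i.e.\ $\phi''\ge0$. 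So your reference to $\phi''\ge0$ and $\coth'\le0$ is correct in spirit but belongs to the eigenspace identification rather than to the monotonicity lemma.
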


\begin{remark}
\rm{ (1) In fact, in our very recent work \cite[Theorems 1.1 and
1.5]{CM1}, we can prove an isoperimetric inequality for the sums of
the reciprocals of the first $(n-1)$ nonzero Neumann eigenvalues of
the Witten-Laplacian on bounded domains in $\mathbb{R}^n$ or
$\mathbb{H}^n$, which together with the monotonicity of the sequence
(\ref{sequence-4}) of Neumann eigenvalues yields directly our
Theorem \ref{theo-7} and Theorem \ref{theo-8} here. This fact has
been already pointed out in \cite[Corollaries 1.2 and 1.6]{CM1},
and readers can check there for details. \\
(2) Based on two reasons, we insist on writing down Theorem
\ref{theo-7} and Theorem \ref{theo-8} here. The one is for the
completion of the whole structure of this paper, and the other one
is that our approach here for proving Theorem \ref{theo-7} and
Theorem
\ref{theo-8} is somehow different from the one used in \cite{CM1}.\\
(3) Different with the Dirichlet case, we need to require that
$o\in{\mathrm{hull}}(\Omega)$ in Theorem \ref{theo-7} and Theorem
\ref{theo-8}. This is because we have to use the Brouwer fixed point
theorem to make sure the existence of an orthonormal frame field
such that the origin of the coordinate system (corresponding to the
orthonormal frame field) locates in the convex hull of $\Omega$, and
then all the computations involved trail functions constructed are
valid. See the proofs of Theorem \ref{theo-7} and Theorem
\ref{theo-8} in Section \ref{s3} for details. }
\end{remark}

The paper is organized as follows. The proofs of the Faber-Krahn
type inequalities, the Hong-Krahn-Szeg\H{o} type inequalities and
the Szeg\H{o}-Weinberger type inequalities for the Witten-Laplcian
will be given in Sections \ref{s2},  \ref{s3} and  \ref{s4}
respectively. Besides, in Section \ref{s5}, we will give the
detailed information about the first nonzero Neumann eigenvalue and
its eigenfunctions of the Witten-Laplacian on prescribed (geodesic)
balls in space forms.

\section{The Faber-Krahn type inequalities for the
Witten-Laplacian} \label{s2}
\renewcommand{\thesection}{\arabic{section}}
\renewcommand{\theequation}{\thesection.\arabic{equation}}
\setcounter{equation}{0}

\subsection{The Euclidean case} \label{s2-1}

Assume that  $f$ is an eigenfunction corresponding to the first
Dirichlet eigenvalue $\lambda_{1,\phi}(\Omega)$. Since $f$ does not
change sign on $\Omega$, without loss of generality, we can assume
$f>0$ on $\Omega$ (see Lemma \ref{lemma3-1} below for the
explanation). Consider the sets $\Omega_{s}:=\{x\in\Omega|f(x)>s\}$,
and let $\Omega^{\ast}_{s}$ be balls in $\mathbb{R}^n$ with center
at the origin $o$ and satisfying
$|\Omega_{s}|_{n,\phi}=|\Omega_{s}^{\ast}|_{n,\phi}$. Let $B_{R}(o)$
be a ball of radius $R$ and centered at $o$
 of
 $\mathbb{R}^{n}$ such that $|\Omega|_{n,\phi}=|B_{R}(o)|_{n,\phi}$,
 i.e. $\int_{\Omega}d\eta=\int_{B_{R}(o)}d\eta$. Define a function
 $f^{\ast}$ on $B_{R}(o)$ having the following properties:
 \begin{itemize}
 \item $f^{\ast}$ is a radial decreasing function;
 \item $f^{\ast}$ takes the value $s$ on the boundary sphere
 $\partial\Omega_{s}^{\ast}$ of the ball $\Omega^{\ast}_{s}$ (for a fixed $s$).
 \end{itemize}
 It is not hard to see that $\Omega_{0}=\Omega$ and correspondingly $\Omega_{0}^{\ast}=B_{R}(o)$.
 The existence of the balls $\Omega_{s}^{\ast}$ can be assured by
 using the Schwarz symmetrization. Readers can check e.g. \cite{BBMP, AH1} for details on how to use symmetrization to
 get balls $\Omega_{s}^{\ast}$ under the constraint of having the same
 weighted volume.

Now, we make an agreement on the notations used right below. Denote
by $\widehat{dv}$ the $(n-1)$-dimensional Hausdorff measure of the
boundary associated to the Riemannian volume element\footnote{~In
fact, for domains $\Omega_{s}$ and $\Omega=\Omega_{0}$, they should
have the same volume element $dv$. However, in order to emphasize
that the domain $\Omega_{s}$ depends on $s$, we wish to additionally
write the volume element of $\Omega_{s}$ as $dv_{s}$ (except
$s=0$).} $dv$, and this convention will be used throughout the
paper. Similarly, $\widehat{d\eta}=e^{-\phi}\widehat{dv}$ would be
the weighted volume element of the boundary. Besides, for
convenience, set $G(s):=\partial\Omega_{s}$,
$S_{t(s)}:=(G(s))^{\ast}=G^{\ast}(s)=\partial\Omega_{s}^{\ast}$
which denotes the sphere with center at the origin and radius
$t(s)$. The following formula is known as the co-area formula (see,
e.g., \cite{FEB1, IC}):
 \begin{itemize}
\item For any continuous function $h$ defined on $\Omega$, one has
 \begin{eqnarray} \label{2-1}
  \int_\Omega{h}dv=\int_0^{\sup f}\int_{G(s)}h|\nabla
f|^{-1}\widehat{dv_s} ds,
\end{eqnarray}
where following the above agreement $\widehat{dv_s}$ denotes the
volume element of the hypersurface $G(s)=f^{-1}(s)$.
\end{itemize}
Clearly, taking $h=|\nabla f|^2$ and then applying the co-area
formula, one has
 \begin{eqnarray*}
\int_\Omega{|\nabla f|^2}dv=\int_0^{\sup f}\int_{G(s)}|\nabla
f|\widehat{dv_s}ds.
 \end{eqnarray*}
Denote by the Schwarz symmetric rearrangement mapping $t:[0,\sup
f]\rightarrow[0,R]$, with $R$ the radius of $B_{R}(o)$, and $\psi$
 the inverse transformation of $t$, where $t$ additionally satisfies $t(0)=R$, $t(\sup
f)=0$.

\begin{lemma} \label{lemma2-1}
If $\Omega$ is a bounded region in $\mathbb{R}^n$, and $\phi$
satisfies \textbf{Property 1} (with $M^{n}$ chosen to be
$\mathbb{R}^n$), then
\begin{eqnarray} \label{2-2}
\int_{\Omega}f^{2}d\eta=\int_{B_{R}(o)}(f^{\ast})^2d\eta,
\end{eqnarray}
 where $B_{R}(o)\subset\mathbb{R}^n$ is the ball defined as in Theorem \ref{theo-1}.
\end{lemma}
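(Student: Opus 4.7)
The plan is to prove the equality via the layer-cake (Cavalieri) representation applied with respect to the weighted measure $d\eta = e^{-\phi}dv$, combined with the defining property of the weighted Schwarz symmetrization, namely that the level sets $\Omega_s$ and $\Omega_s^{\ast}$ have the same weighted volume for every $s\in[0,\sup f]$.

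First, I would note that by construction $f^{\ast}$ is the radially symmetric decreasing rearrangement of $f$ with respect to the measure $d\eta$, so that $\{x\in B_R(o):f^{\ast}(x)>s\}=\Omega_s^{\ast}$ for each $s\in[0,\sup f]=[0,\sup f^{\ast}]$, and hence
\begin{eqnarray*}
|\{f>s\}|_{\phi}=|\Omega_s|_{\phi}=|\Omega_s^{\ast}|_{\phi}=|\{f^{\ast}>s\}|_{\phi}.
\end{eqnarray*}
This equidistribution of $f$ and $f^{\ast}$ with respect to $d\eta$ is the single geometric input of the lemma; everything else will be a formal manipulation of integrals.

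Next I would apply the layer-cake formula in the weighted setting. For the nonnegative function $f^2$, Fubini's theorem gives
\begin{eqnarray*}
\int_{\Omega}f^{2}d\eta=\int_{\Omega}\!\!\int_{0}^{f(x)^{2}}\!\!d\tau\, d\eta(x)=\int_{0}^{\infty}|\{f^{2}>\tau\}|_{\phi}\, d\tau,
\end{eqnarray*}
and the substitution $\tau=s^{2}$ turns this into $2\int_{0}^{\sup f}s\,|\Omega_{s}|_{\phi}\,ds$. Performing exactly the same computation on $B_{R}(o)$ with $f^{\ast}$ in place of $f$ gives $\int_{B_{R}(o)}(f^{\ast})^{2}d\eta=2\int_{0}^{\sup f^{\ast}}s\,|\Omega_{s}^{\ast}|_{\phi}\,ds$. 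Since the two distribution functions agree and $\sup f=\sup f^{\ast}$, the two expressions are equal, which is precisely (\ref{2-2}).

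I do not expect any serious obstacle: the lemma is essentially the statement that rearrangement with respect to the weighted measure preserves weighted $L^{2}$-norms, and once one has set up $f^{\ast}$ so that its superlevel sets are the balls $\Omega_s^{\ast}$ chosen to satisfy $|\Omega_s|_\phi=|\Omega_s^{\ast}|_\phi$, the proof is a one-line application of Cavalieri's principle. The only minor point requiring care is justifying the use of Fubini, which is immediate because $f\geq 0$ is measurable and $d\eta$ is a finite (positive) measure on the bounded domain $\Omega$, so the integrand is nonnegative and all integrals in sight are well-defined (possibly infinite, but finite here because $f\in\widetilde{W}^{1,2}_{0}(\Omega)$ and $\Omega$ is bounded with $\phi$ smooth, hence $e^{-\phi}$ bounded on $\bar{\Omega}$).
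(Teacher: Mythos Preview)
Your proof is correct, but it follows a different path from the paper's.  The paper computes $\int_{B_{R}(o)}(f^{\ast})^{2}d\eta$ directly in polar coordinates using the radial representation $f^{\ast}=\psi(t)$, then performs the change of variable $t=t(s)$ so that $\psi(t(s))=s$, and finally invokes the co-area formula together with the identity $t'(s)\int_{S_{t(s)}}e^{-\phi}\widehat{dv}=\frac{d}{ds}|\Omega^{\ast}_{s}|_{\phi}=\frac{d}{ds}|\Omega_{s}|_{\phi}=-\int_{G(s)}|\nabla f|^{-1}e^{-\phi}\widehat{dv_{s}}$ to recover $\int_{\Omega}f^{2}d\eta$.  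Your route via the layer-cake formula is shorter and avoids the co-area formula entirely (and hence any implicit regularity assumption on the level sets of $f$); it isolates cleanly the single geometric input, namely $|\Omega_{s}|_{\phi}=|\Omega^{\ast}_{s}|_{\phi}$.  What the paper's more explicit computation buys is that the auxiliary maps $t(s)$ and $\psi$, together with the co-area identities displayed along the way, are exactly the objects reused in the subsequent proof of Theorem~\ref{theo-1} when comparing the Dirichlet energies, so the lemma doubles as a warm-up for that machinery.
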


\begin{proof}
By a direct calculation, one can obtain
\begin{eqnarray*}
\int_{B_{R}(o)}(f^{\ast})^2d\eta&=&\int_0^{R}\int_{\partial B_{t}(o)}(f^{\ast})^2 e^{-\phi(t)}\widehat{dv_{t}}dt \nonumber\\
&=&\int_0^{R}\psi^2(t)\int_{\partial B_{t}(o)}e^{-\phi(t)}\widehat{dv_{t}}dt \nonumber\\
&=&-\int_0^{\sup f}\psi^2(t(s))t'(s)\left(\int_{\partial B_{t(s)}(o)}e^{-\phi(t(s))}\widehat{dv_{t}}\right)ds \nonumber\\
&=&-\int_0^{\sup f}s^{2}\left(-\int_{G(s)}|\nabla f|^{-1}e^{-\phi|_{G(s)}}\widehat{dv_{s}}\right)ds \nonumber\\
&=&\int_{\Omega}f^2d\eta,
\end{eqnarray*}
 which implies (\ref{2-2}) directly.
\end{proof}

Now, together with \textbf{Fact A} and \textbf{Fact B}, we can get:

\begin{proof} [Proof of Theorem \ref{theo-1}]  Applying the
co-area formula, we have
\begin{eqnarray} \label{2-3}
\int_\Omega{|\nabla f|^2}e^{-\phi}dv=\int_0^{\sup
f}\int_{G(s)}|\nabla f|e^{-\phi}\widehat{dv_s}ds.
\end{eqnarray}
We can obtain by using the Cauchy-Schwarz inequality that
\begin{eqnarray}  \label{2-4}
\int_{G(s)}|\nabla f|e^{-\phi|_{G(s)}}\widehat{dv_{s}}\geq
\frac{\left(\int_{G(s)}e^{-\phi|_{G(s)}}\widehat{dv_{s}}\right)^2}{\int_{G(s)}|\nabla
f|^{-1}e^{-\phi|_{G(s)}}\widehat{dv_{s}}}.
\end{eqnarray}
By \textbf{Fact A} and \textbf{Fact B}, we have
$\int_{G(s)}e^{-\phi|_{G(s)}}\widehat{dv_{s}}\geq
\int_{G^{\ast}(s)}e^{-\phi(t(s))}\widehat{dv_{s}}$, with equality
holding if and only if $G(s)\setminus E(s)=G^{\ast}(s)$, where the
set $E(s)$ denotes a set of measure zero. Substituting this fact
into (\ref{2-4}) yields
\begin{eqnarray}  \label{2-5}
\int_{G(s)}|\nabla f|e^{-\phi|_{G(s)}}\widehat{dv_{s}}\geq
\frac{\left(\int_{G^{\ast}(s)}e^{-\phi(t(s))}\widehat{dv_{s}}\right)^2}{\int_{G(s)}|\nabla
f|^{-1}e^{-\phi|_{G(s)}}\widehat{dv_{s}}}.
\end{eqnarray}
On the other hand, one has
\begin{eqnarray*} \label{2-6}
\int_{G^{\ast}(s)}|\nabla f^{\ast}|e^{-\phi(t(s))}\widehat{dv_{s}}=
\frac{\left(\int_{G^{\ast}(s)}e^{-\phi(t(s))}\widehat{dv_{s}}\right)^2}{\int_{G^{\ast}(s)}|\nabla
f^{\ast}|^{-1}e^{-\phi(t(s))}\widehat{dv_{s}}},
\end{eqnarray*}
 since $|\nabla f^{\ast}|$ and $e^{-\phi(s)}$ are constant on the sphere $G^{\ast}(s)$. We notice that
 \begin{eqnarray*}
|\Omega_r|_{n,\phi}&=&\int_{\Omega_r}e^{-\phi}dv\\
&=&\int_r^{\sup f}\int_{G(s)}|\nabla
f|^{-1}e^{-\phi|_{G(s)}}\widehat{dv_{s}}ds,
 \end{eqnarray*}
and so it follows that
 \begin{eqnarray*} \label{2-7}
 \left(|\Omega_r|_{n,\phi}\right)'(s)=-\int_{G(s)}|\nabla
 f|^{-1}e^{-\phi|_{G(s)}}\widehat{dv_{s}},
 \end{eqnarray*}
which implies
 \begin{eqnarray} \label{2-8}
-\int_{G(s)}|\nabla
 f|^{-1}e^{-\phi|_{G(s)}}\widehat{dv_{s}}=\frac{d}{ds}|\Omega_s|_{n,\phi}=\frac{d}{ds}|\Omega^{\ast}_s|_{n,\phi}.
 \end{eqnarray}
Since
 \begin{eqnarray*}
|\Omega_s^{\ast}|_{n,\phi}=\int_0^{t(s)}\int_{\partial
B_{z}(o)}e^{-\phi(z)}\widehat{dv_{z}}dz,
 \end{eqnarray*}
one has
\begin{eqnarray} \label{2-9}
 \frac{d}{ds}|\Omega_s^{\ast}|_{n,\phi}=
 t'(s)\int_{S_{t(s)}}e^{-\phi(t(s))}\widehat{dv_{s}}.
\end{eqnarray}

We wish to point out the following fact:

\begin{lemma} \label{lemma-add}
For the function $t(s)$ in (\ref{2-9}), one has $t'(s)\neq0$.
\end{lemma}
\begin{proof}
Denote by $\mathcal{T}$ the set consisting of points, where the
function $f$ attains its critical values. By Sard's theorem (i.e.
the set of critical points of a smooth function has measure zero),
we can conclude that $\mathcal{T}$ has measure zero. Therefore, one
knows
\begin{eqnarray*}
\int_{\mathcal{T}}|\nabla f|^{-1}e^{-\phi|_{G(s)}}\widehat{dv_s}=0,
\end{eqnarray*}
and then
\begin{eqnarray} \label{add-222}
\int_{G(s)}|\nabla
f|^{-1}e^{-\phi|_{G(s)}}\widehat{dv_s}&=&\int_{G(s)\setminus\mathcal{T}}|\nabla
f|^{-1}e^{-\phi|_{G(s)}}\widehat{dv_s}
+\int_{\mathcal{T}}|\nabla f|^{-1}e^{-\phi|_{G(s)}}\widehat{dv_s}\nonumber\\
&=&\int_{G(s)\setminus\mathcal{T}}|\nabla
f|^{-1}e^{-\phi|_{G(s)}}\widehat{dv_s}.
\end{eqnarray}
This implies that there is no essential difference when doing
integrations over $G(s)\setminus\mathcal{T}$ or over $G(s)$. Based
on this reason, in the sequel, for convenience and simplicity, we
wish to integrate over $G(s)$ directly.

Therefore, combining (\ref{add-222}) with (\ref{2-8})-(\ref{2-9}),
one has
\begin{eqnarray*}
\int_{G(s)\setminus\mathcal{T}}|\nabla
f|^{-1}e^{-\phi|_{G(s)}}\widehat{dv_s}=t'(s)\int_{S_{t(s)}}e^{-\phi(t(s))}\widehat{dv_s},
\end{eqnarray*}
which implies $t'(s)\neq0$ since the LHS of the above equality
cannot be zero.
\end{proof}

Now let us go back to our discussion. Putting
(\ref{2-8})-(\ref{2-9}) into (\ref{2-5}) results in\footnote{~One
would see that similar conclusions can be obtained in the hemisphere
case and also the hyperbolic case.}
 \begin{eqnarray*}
\int_{G(s)}|\nabla
f|e^{-\phi|_{G(s)}}\widehat{dv_{s}}&\geq&\frac{\left(\int_{G^{\ast}(s)}e^{-\phi(t(s))}\widehat{dv_{s}}\right)^2}{\int_{G(s)}|\nabla
f|^{-1}e^{-\phi|_{G(s)}}\widehat{dv_{s}}}\nonumber\\
&=&\frac{\int_{S_{t(s)}}e^{-\phi(t(s))}\widehat{dv_{s}}}{-t'(s)}.
 \end{eqnarray*}
The above expression makes sense since $t'(s)\neq0$ by Lemma
\ref{lemma-add}. Therefore, by substituting the above inequality
into (\ref{2-3}), one can obtain
 \begin{eqnarray} \label{2-10}
\int_\Omega{|\nabla f|^2}d\eta&=&\int_0^{\sup
f}\int_{G(s)}|\nabla f|e^{-\phi|_{G(s)}}\widehat{dv_s}ds\nonumber\\
&\geq& -\int_0^{\sup
f}\frac{\int_{S_{t(s)}}e^{-\phi(t(s))}\widehat{dv_{s}}}{t'(s)}ds\nonumber\\
&=& -\int_0^{\sup
f}\left(\psi'(t(s))\right)^{2}t'(s)\int_{S_{t(s)}}e^{-\phi(t(s))}\widehat{dv_{s}}ds\nonumber\\
&=&
\int_{0}^{R}\left(\psi'(t)\right)^{2}\int_{S_{t(s)}}e^{-\phi(t(s))}\widehat{dv_{s}}dt\nonumber\\
&=& \int_{B_{R}(o)}|\nabla f^{\ast}|^{2}d\eta.
 \end{eqnarray}
The equality case in (\ref{2-10}) implies that
$\int_{G(0)}e^{-\phi|_{G(0)}} \widehat{dv} =
\int_{G^{\ast}(0)}e^{-\phi(t)} \widehat{dv}$ holds. So, one has
$G(0)\setminus E(0) = G^{\ast}(0)$, that is, $\Omega\setminus E(0) =
B_{R}(o)$. Moreover, this domain should lie entirely in the region
$B_{\mathcal{R}(h)}$ defined by (\ref{UIR-D}). Furthermore, by Lemma
\ref{lemma2-1}, we have
\begin{eqnarray*}
\lambda_{1,\phi}(\Omega)=\frac{\int_{\Omega}|\nabla
f|^2d\eta}{\int_{\Omega}f^2d\eta} \geq\frac{\int_{B_{R}(o)}|\nabla
f^{\ast}|^2
d\eta}{\int_{B_{R}(o)}(f^{\ast})^{2}d\eta}\geq\lambda_{1,\phi}(B_{R}(o)),
\end{eqnarray*}
which completes the proof of Theorem \ref{theo-1}.
\end{proof}

\begin{proof} [Proof of Theorem \ref{theo-4}]
Use an almost the same argument as that in the above proof of
Theorem \ref{theo-1} except replacing the usage of \textbf{Fact A}
and \textbf{Fact B} by the following fact:
\begin{itemize}
\item (\cite{BBMP}) Assume that the function $a:[0,+\infty)\rightarrow[0,+\infty)$ satisfies preconditions $a(t)$ is non-decreasing for $t\geq0$,
  $(a(z^{\frac{1}{n}})-a(0))z^{1-\frac{1}{n}}$ is convex, $z\geq0$,
  and moreover, assume that $\Omega\subset\mathbb{R}^n$ is a bounded open set with Lipschitz boundary $\partial\Omega$. Then
\begin{eqnarray*}
\int_{\partial\Omega}a(|x|)dx\geq\int_{\partial\Omega^*}a(|x|)dx,
\end{eqnarray*}
where $\partial\Omega^{\ast}$ is a sphere with center at the origin
and enclosing the weighted volume equal to that of $\Omega$.
\end{itemize}
Then the conclusion in Theorem \ref{theo-4} would follow naturally
by choosing $a(t)=e^{-\phi(t)}$.
\end{proof}

\subsection{The hemisphere case}

As we know, the Schwarz symmetrization can also be carried out on
hemispheres and hyperbolic spaces. For convenience, we will continue
to use notions and also the notations introduced at the beginning of
Subsection \ref{s2-1} to investigate the Faber-Krahn type
inequalities for the Witten-Laplcian in the hemisphere case and the
hyperbolic case.

\begin{lemma} \label{lemma2-2}
Assume that the function $\phi$ satisfies \textbf{Property 1} (with
$M^{n}$ chosen to be $\mathbb{S}^{n}_{+}$), where the point $o$
mentioned in \textbf{Property 1} should additionally be required to
be the base point of $\mathbb{S}^{n}_{+}$. Then we have
\begin{eqnarray*}
\int_{\Omega}f^{2}d\eta=\int_{B_{R}(o)}(f^{\ast})^2d\eta,
\end{eqnarray*}
 where $B_{R}(o)\subset\mathbb{S}^{n}_{+}$ is the geodesic ball defined as in Theorem \ref{theo-2}.
\end{lemma}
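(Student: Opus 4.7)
The plan is to mirror the argument of Lemma 2.1 verbatim, with the only modification being the use of geodesic polar coordinates on $\mathbb{S}^{n}_{+}$ in place of Euclidean polar coordinates. Since $o$ is the base point, $\mathbb{S}^{n}_{+}$ is realized as $[0,\pi/2]\times_{\sin t}\mathbb{S}^{n-1}$, and because $\phi$ is radial, the conformal factor $e^{-\phi(t)}$ is constant on each geodesic sphere $\partial B_{t}(o)$. This constancy is the only structural property of the ambient space that the proof of Lemma 2.1 really used, beyond the co-area formula, so the argument should go through with no essential change.

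First I would expand the right-hand side in geodesic polar coordinates centered at $o$:
\begin{eqnarray*}
\int_{B_{R}(o)}(f^{\ast})^{2}d\eta = \int_{0}^{R}\psi^{2}(t)\int_{\partial B_{t}(o)} e^{-\phi(t)}\widehat{dv_{t}}\,dt,
\end{eqnarray*}
using that $f^{\ast}$ is radial with $f^{\ast}|_{\partial B_{t}(o)}=\psi(t)$. Next, changing variables via the Schwarz rearrangement $t=t(s)$ (with $t(0)=R$, $t(\sup f)=0$, so $\psi(t(s))=s$), this becomes
\begin{eqnarray*}
-\int_{0}^{\sup f} s^{2}\, t'(s)\left(\int_{\partial B_{t(s)}(o)} e^{-\phi(t(s))}\widehat{dv_{t}}\right) ds.
\end{eqnarray*}

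The crucial bridge is the identity obtained by differentiating both sides of $|\Omega_{s}|_{\phi}=|\Omega_{s}^{\ast}|_{\phi}$ in $s$. On the left, using the co-area formula (\ref{2-1}) applied to $h=e^{-\phi}$, one gets $-\int_{G(s)}|\nabla f|^{-1}e^{-\phi|_{G(s)}}\widehat{dv_{s}}$; on the right, expressing the weighted volume of the geodesic ball in polar coordinates and differentiating, one obtains $t'(s)\int_{\partial B_{t(s)}(o)} e^{-\phi(t(s))}\widehat{dv_{t}}$. Substituting this identity into the displayed integral and applying (\ref{2-1}) once more (with $h=f^{2}e^{-\phi}$, using $f=s$ on $G(s)$) converts it to $\int_{\Omega}f^{2}d\eta$, as desired.

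The only point that is genuinely different from the Euclidean case is the need to check that the geodesic balls produced by the symmetrization actually remain inside $\mathbb{S}^{n}_{+}$, so that the polar-coordinate expansion is valid throughout. This is what I expect to be the main obstacle, but it is easily handled: the weighted volume of $\mathbb{S}^{n}_{+}$ with respect to $\phi=-\log\cos t$ is a monotone increasing function of radius on $[0,\pi/2)$, so the unique $R$ satisfying $|\Omega|_{\phi}=|B_{R}(o)|_{\phi}$ lies in $[0,\pi/2)$ automatically, and the super-level balls $\Omega_{s}^{\ast}=B_{t(s)}(o)$ satisfy $t(s)\le R<\pi/2$. Once this is noted, the computation above is entirely analogous to the proof of Lemma 2.1 and yields the stated equality.
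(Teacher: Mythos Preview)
Your proposal is correct and matches the paper's approach exactly: the paper's own proof simply states that the computation is formally the same as that for Lemma~2.1 and omits the details. Your write-up is in fact more detailed than what the paper provides, and the extra observation that $t(s)\le R<\pi/2$ (so the symmetrized balls remain in the hemisphere) is a sensible point to make explicit, though note that the monotonicity of $r\mapsto |B_r(o)|_\phi$ holds for any radial $\phi$, not just $\phi=-\log\cos t$.
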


\begin{proof}
Formally, the computation for the assertion in Lemma \ref{lemma2-2}
is almost the same as that for (\ref{2-2}), and so we omit the
details here.
\end{proof}

We also need the following fact:

\begin{lemma} [\cite{BHW}]  \label{lemma2-3}
Let $\Omega\subset\mathbb{S}^{n}_{+}$ be a compact $n$-dimensional
domain with smooth boundary $\partial\Omega$. Let $H$ be the
normalized mean curvature of $\partial\Omega$. Let $V(x)=\cos
dist_{\mathbb{S}^n}(x,o)$. If $H$ is positive everywhere,
then\footnote{~In (\ref{2-11}), the Hausdorff measures of the domain
$\Omega$ and its boundary $\partial\Omega$ are given by $d\Omega$,
$dA$ respectively. This usage of notations does not match the
convention made at the beginning of Subsection \ref{s2-1}, and the
reason is that we wish to list here the original statement of the
conclusion in Lemma \ref{lemma2-3} proven firstly in the reference
\cite{BHW}.}
\begin{eqnarray} \label{2-11}
\int_{\partial\Omega}\frac{V}{H}dA\geq n\int_\Omega V d\Omega.
\end{eqnarray}
The equality in (\ref{2-11}) holds if and only if $\Omega$ is
isometric to a geodesic ball.
\end{lemma}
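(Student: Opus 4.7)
The inequality is a weighted Heintze--Karcher inequality in which the potential $V(x)=\cos d_{\mathbb{S}^n}(x,o)$ plays the role of a (sub)static weight: on $\mathbb{S}^n$ one has $\nabla^2 V=-Vg$, $\Delta V=-nV$, and $\mathrm{Ric}_{\mathbb{S}^n}=(n-1)g$, which together yield the substatic identity $V\,\mathrm{Ric}-\nabla^2 V+(\Delta V)g=0$. My plan is to follow a Ros--Brendle scheme adapted to this weight: construct an auxiliary potential on $\Omega$ solving a weighted Poisson problem, apply a weighted Reilly-type formula in which the substatic identity kills the bulk curvature term, and then close the argument with Cauchy--Schwarz on the boundary.

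First I would solve the weighted Dirichlet problem: find $f\in C^\infty(\overline\Omega)$ with
$$\mathrm{div}(V\nabla f)=nV \text{ in }\Omega,\qquad f=0 \text{ on }\partial\Omega.$$
Since $V>0$ on $\overline\Omega\subset\mathbb{S}^n_+$, this is uniformly elliptic with smooth coefficients, so standard theory gives existence and smoothness. Next, the divergence theorem together with $f|_{\partial\Omega}=0$ produces the target integral as a boundary flux,
$$n\int_\Omega V\,d\Omega=\int_{\partial\Omega}V\,\partial_\nu f\,dA.$$
Splitting $V\,\partial_\nu f=\sqrt{VH}\,\partial_\nu f\cdot\sqrt{V/H}$ and applying the Cauchy--Schwarz inequality on $\partial\Omega$ gives
$$\left(n\int_\Omega V\,d\Omega\right)^{\!2}\leq\left(\int_{\partial\Omega}V\,H\,(\partial_\nu f)^2\,dA\right)\!\left(\int_{\partial\Omega}\frac{V}{H}\,dA\right),$$
so the inequality will follow once I bound the first factor on the right by $n\int_\Omega V\,d\Omega$.

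For that bound I would invoke a Qiu--Xia type weighted Reilly formula for the substatic weight $V$. Because the substatic identity holds with equality on $\mathbb{S}^n$, the bulk curvature contribution drops out entirely, while tangential-gradient contributions vanish by the boundary condition $f|_{\partial\Omega}=0$. What survives is
$$(n-1)\int_{\partial\Omega}V\,H\,(\partial_\nu f)^2\,dA\leq\int_\Omega V\bigl((\Delta f)^2-|\nabla^2 f|^2\bigr)\,dv.$$
Combining this with the pointwise trace inequality $|\nabla^2 f|^2\ge (\Delta f)^2/n$ and with the bulk identity $\int_\Omega V\,\Delta f = n\int_\Omega V$ (obtained by expanding $\mathrm{div}(V\nabla f)=nV$ and integrating), one finds $\int_{\partial\Omega}V\,H\,(\partial_\nu f)^2\,dA\leq n\int_\Omega V\,d\Omega$. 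Substituting into the Cauchy--Schwarz estimate yields the desired inequality $\int_{\partial\Omega}V/H\,dA\ge n\int_\Omega V\,d\Omega$.

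For rigidity, equality in the boundary Cauchy--Schwarz forces $H\,\partial_\nu f$ to be constant on $\partial\Omega$, and equality in the Newton/Reilly step forces $\nabla^2 f=(\Delta f/n)g$ on $\Omega$, so $\nabla f$ is a conformal vector field; coupling this with $\nabla^2 V=-Vg$ then pins $\Omega$ to a level set of $V$, that is, a geodesic ball centered at $o$. The main obstacle is the weighted Reilly step: getting the boundary curvature factor to come out as exactly $(n-1)VH(\partial_\nu f)^2$ requires the substatic identity $V\,\mathrm{Ric}-\nabla^2 V+(\Delta V)g=0$ to precisely cancel the bulk curvature error produced by integration by parts, and verifying this cancellation together with the vanishing of tangential boundary terms is the technical heart of the proof.
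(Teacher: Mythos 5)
The paper does not prove this lemma; it is stated verbatim as a citation of Brendle--Hung--Wang \cite{BHW}, and that reference in turn relies on Brendle's Heintze--Karcher inequality in substatic warped products, which is established there by a flow/monotonicity argument, not by a Reilly-type identity. So your weighted-Reilly/Cauchy--Schwarz scheme (in the spirit of Ros and its weighted extensions by Qiu--Xia) is a genuinely different route from the one the cited source actually uses. The route can be made to work, but your sketch has a concrete gap that does not survive correction.

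The breakdown is in the final step, not the Reilly step. Your claimed ``bulk identity'' $\int_\Omega V\,\Delta f\,dv = n\int_\Omega V\,dv$ is false: from $\mathrm{div}(V\nabla f)=nV$ one gets $\int_\Omega V\Delta f = n\int_\Omega V - \int_\Omega\langle\nabla V,\nabla f\rangle$, and the cross term equals $n\int_\Omega fV\,dv\neq 0$ (integrate by parts, use $\Delta V=-nV$ and $f|_{\partial\Omega}=0$). Worse, even if that identity held, the chain $\int_{\partial\Omega}VH_{\mathrm{tr}}f_\nu^2 \leq \frac{n-1}{n}\int_\Omega V(\Delta f)^2$ would need $\int_\Omega V(\Delta f)^2 \leq n^2\int_\Omega V$, whereas Cauchy--Schwarz (Jensen) gives $\int_\Omega V(\Delta f)^2 \geq (\int_\Omega V\Delta f)^2/\int_\Omega V$, i.e.\ the opposite inequality. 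The reason the unweighted Ros argument works is that there $\Delta f\equiv 1$ is \emph{pointwise} constant, so $|\nabla^2 f|^2\geq(\Delta f)^2/n$ can be integrated without loss. With your PDE $\mathrm{div}(V\nabla f)=nV$, only the drift quantity $\Delta_V f:=\Delta f+\langle\nabla\log V,\nabla f\rangle$ is constant, while the Newton trace inequality controls $(\Delta f)^2$, and this mismatch cannot be bridged as written. (The Qiu--Xia adaptation keeps the unweighted PDE $\Delta f\equiv 1$, $f|_{\partial\Omega}=0$; then the divergence step yields $\int_{\partial\Omega}V\,\partial_\nu f = \int_\Omega V - \int_\Omega f\Delta V \geq \int_\Omega V$ using $\Delta V=-nV\leq 0$ and $f\leq 0$, which has exactly the right sign, and the Reilly step closes cleanly.)

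A smaller point: in the weighted Reilly identity for $\int_\Omega V\left[(\Delta f)^2-|\nabla^2 f|^2\right]$ with $f|_{\partial\Omega}=0$, the bulk tensor that appears is $\nabla^2 V-(\Delta V)g+V\,\mathrm{Ric}$, which on $\mathbb{S}^n$ with $V=\cos t$ equals $2(n-1)Vg$, not $0$. The static identity you invoke, $V\,\mathrm{Ric}-\nabla^2 V+(\Delta V)g=0$, has the opposite sign on the Hessian and Laplacian terms and does \emph{not} cancel the bulk. This particular error happens to be harmless --- the surviving bulk term is nonnegative and may be dropped, so your inequality $\int_{\partial\Omega}VH_{\mathrm{tr}}f_\nu^2\leq\int_\Omega V[(\Delta f)^2-|\nabla^2 f|^2]$ is still valid --- but it shows the cancellation mechanism you describe is not the one actually operating, and it is worth straightening out before attempting the rigidity analysis.
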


Now, we have:

\begin{proof} [Proof of Theorem \ref{theo-2}] Applying the
co-area formula, we have
\begin{eqnarray} \label{2-12}
\int_\Omega{|\nabla f|^2}\cos tdv=\int_0^{\sup f}\int_{G(s)}|\nabla
f|\cos(t|_{G(s)})\widehat{dv_s}ds.
\end{eqnarray}
We can obtain by using the Cauchy-Schwarz inequality that
\begin{eqnarray}  \label{2-13}
\int_{G(s)}|\nabla f|\cos(t|_{G(s)})\widehat{dv_{s}}\geq
\frac{\left(\int_{G(s)}\cos(t|_{G(s)})\widehat{dv_{s}}\right)^2}{\int_{G(s)}|\nabla
f|^{-1}\cos(t|_{G(s)})\widehat{dv_{s}}}.
\end{eqnarray}
By Lemma \ref{lemma2-3} and the assumption that $H$ is a positive
constant, one has $\int_{G(s)}\cos(t|_{G(s)})\widehat{dv_{s}}\geq
\int_{G^{\ast}(s)}\cos t(s) \widehat{dv_{s}}$, and then (\ref{2-13})
becomes
\begin{eqnarray}  \label{2-14}
\int_{G(s)}|\nabla f|\cos(t|_{G(s)})\widehat{dv_{s}}\geq
\frac{\left(\int_{G^{\ast}(s)}\cos t(s)
\widehat{dv_{s}}\right)^2}{\int_{G(s)}|\nabla
f|^{-1}\cos(t|_{G(s)})\widehat{dv_{s}}}.
\end{eqnarray}
On the other hand, one has
\begin{eqnarray*} \label{2-15}
\int_{G^{\ast}(s)}|\nabla f^{\ast}|\cos t(s)\widehat{dv_{s}}=
\frac{\left(\int_{G^{\ast}(s)}\cos
t(s)\widehat{dv_{s}}\right)^2}{\int_{G^{\ast}(s)}|\nabla
f^{\ast}|^{-1}\cos t(s)\widehat{dv_{s}}},
\end{eqnarray*}
 since $|\nabla f^{\ast}|$ and $\cos t(s)$ are constant on the sphere
 $G^{\ast}(s)$. Notice that
 \begin{eqnarray*}
|\Omega_r|_{n,\phi}=\int_{\Omega_r}\cos t(r) dv=\int_r^{\sup
f}\int_{G(s)}|\nabla f|^{-1}\cos(t|_{G(s)})\widehat{dv_{s}}ds,
 \end{eqnarray*}
and so it follows that
 \begin{eqnarray*} \label{2-16}
 \left(|\Omega_r|_{n,\phi}\right)'(s)=-\int_{G(s)}|\nabla
 f|^{-1}\cos(t|_{G(s)})\widehat{dv_{s}},
 \end{eqnarray*}
which implies
 \begin{eqnarray} \label{2-17}
-\int_{G(s)}|\nabla
 f|^{-1}\cos(t|_{G(s)})\widehat{dv_{s}}=\frac{d}{ds}|\Omega_s|_{n,\phi}=\frac{d}{ds}|\Omega^{\ast}_s|_{n,\phi}.
 \end{eqnarray}
 Since
 \begin{eqnarray*}
|\Omega_s^{\ast}|_{n,\phi}=\int_0^{t(s)}\int_{\partial B_{z}(o)}\cos
z\widehat{dv_{z}}dz,
 \end{eqnarray*}
one has
\begin{eqnarray} \label{2-18}
 \frac{d}{ds}|\Omega_s^{\ast}|_{n,\phi}=
 t'(s)\int_{S_{t(s)}}\cos t(s)\widehat{dv_{s}}.
\end{eqnarray}
Putting (\ref{2-17})-(\ref{2-18}) into (\ref{2-14}) results in
 \begin{eqnarray*}
\int_{G(s)}|\nabla
f|\cos(t|_{G(s)})\widehat{dv_{s}}&\geq&\frac{\left(\int_{G^{\ast}(s)}\cos
t(s) \widehat{dv_{s}}\right)^2}{\int_{G(s)}|\nabla
f|^{-1}\cos(t|_{G(s)})\widehat{dv_{s}}}\nonumber\\
&=&\frac{\int_{S_{t(s)}}\cos t(s)\widehat{dv_{s}}}{-t'(s)}.
 \end{eqnarray*}
Therefore, by substituting the above inequality into (\ref{2-12}),
one has
\begin{eqnarray*}
\int_\Omega{|\nabla f|^2}d\eta&=&\int_0^{\sup
f}\int_{G(s)}|\nabla f|\cos(t|_{G(s)})\widehat{dv_s}ds\nonumber\\
&\geq& -\int_0^{\sup
f}\frac{\int_{S_{t(s)}}\cos t(s)\widehat{dv_{s}}}{t'(s)}ds\nonumber\\
&=& -\int_0^{\sup
f}\left(\psi'(t(s))\right)^{2}t'(s)\int_{S_{t(s)}}\cos t(s)\widehat{dv_{s}}ds\nonumber\\
&=&
\int_{0}^{R}\left(\psi'(t)\right)^{2}\int_{S_{t(s)}}\cos t(s)\widehat{dv_{s}}dt\nonumber\\
&=& \int_{B_{R}(o)}|\nabla f^{\ast}|^{2}d\eta.
 \end{eqnarray*}
 Together with Lemma \ref{lemma2-2}, it follows that
  \begin{eqnarray} \label{2-19}
 \lambda_{1,\phi}(\Omega)=\frac{\int_{\Omega}|\nabla f|^2d\eta}{\int_{\Omega}f^2d\eta}
\geq\frac{\int_{B_{R}(o)}|\nabla f^{\ast}|^2
d\eta}{\int_{B_{R}(o)}(f^{\ast})^2d\eta}\geq\lambda_{1,\phi}(B_{R}(o)).
  \end{eqnarray}
Especially, if the equality in (\ref{2-19}) was achieved, then the
equality in (\ref{2-13}) and (\ref{2-14}) can be attained
simultaneously, and the rigidity assertion in Theorem \ref{theo-2}
follows by using Lemma \ref{lemma2-3} directly. This completes the
proof of Theorem \ref{theo-2}.
\end{proof}

\subsection{The hyperbolic case}

\begin{proof} [Proof of Theorem \ref{theo-3}]
It is not hard to see that similar to Lemma \ref{lemma2-1}, in the
hyperbolic case one also has the $L^{2}$ integral (w.r.t. the
weighted density $d\eta$) unchanged after the Schwarz symmetrization
under the constraint of fixed weighted volume. Besides, if one looks
at the proofs of Theorems \ref{theo-1} and \ref{theo-2}, one would
find that in the two different cases (i.e. the case of Euclidean
spaces and the case of hemispheres), the co-area formula and most
subsequent calculations look similarly in form. The key difference
for those two cases is the usage of weighted isoperimetric
inequalities (i.e. the way of dealing with (\ref{2-5}) and
(\ref{2-13})) \emph{properly}. Based on these facts, then  using an
almost the same argument as that in the proof of Theorem
\ref{theo-1}, together with the help of \textbf{Fact C} (i.e., the
geometric isoperimetric inequality in $\mathbb{H}^n$ under the
constraint of fixed weighted volume), we can get the spectral
isoperimetric inequality and the rigidity in Theorem \ref{theo-3}.
\end{proof}

\section{The Hong-Krahn-Szeg\H{o} type inequalities for the
Witten-Laplcian} \label{s3}
\renewcommand{\thesection}{\arabic{section}}
\renewcommand{\theequation}{\thesection.\arabic{equation}}
\setcounter{equation}{0}

For the Dirichlet eigenvalue problem (\ref{eigen-D11}), we know from
Section \ref{s1} that its admissible space is the Sobolev space
$W^{1,2}_{0,\phi}(\Omega)$. Using the inner product (\ref{inn-p}),
one can define the $L^{2}$ space $\widehat{L}^{2}(\Omega)$ w.r.t.
the weighted density as follows: we say that
$u\in\widehat{L}^{2}(\Omega)$ if
 \begin{eqnarray*}
 \int_{\Omega}u^{2}e^{-\phi}dv<\infty.
 \end{eqnarray*}
Before giving the proof of the Hong-Krahn-Szeg\H{o} type
inequalities for the second Dirichlet eigenvalue of the
Witten-Laplcian, we need the following facts.

\begin{lemma} (Nodal domain theorem for the Witten-Laplacian, \cite{CMW})
\label{lemma3-1} For the Dirichlet eigenvalue problem
(\ref{eigen-D11}), its eigenvalues consist of a non-decreasing
sequence (\ref{sequence-3}). Denote by $f_{i}$ an eigenfunction of
the $i$-th eigenvalue $\lambda_{i,\phi}$, $i=1,2,3,\cdots$, and
$\{f_{1},f_{2},f_{3},\cdots\}$ forms a complete orthogonal basis of
${\widehat{L}^2(\Omega)}$. Then for each $k=1,2,3,\cdots$, the
number of nodal domains of $f_{k}$ is less than or equal to $k$.
\end{lemma}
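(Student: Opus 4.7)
The statement bundles two assertions: (i) the spectral picture (the non-decreasing sequence of Dirichlet eigenvalues together with the completeness of eigenfunctions in $\widetilde{L}^{2}(\Omega)$), and (ii) the Courant-type bound on the number of nodal components of $f_{k}$. The plan is to reduce (i) to standard functional-analytic facts applied in the weighted setting, and to prove (ii) by a direct adaptation of Courant's classical argument, replacing the usual inner product by the weighted one $\widetilde{(\cdot,\cdot)}$ defined in (\ref{inn-p}).

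For (i), I would first note that $-\Delta_{\phi}$ is symmetric with respect to $\widetilde{(\cdot,\cdot)}$ on $C^{\infty}_{0}(\Omega)$, because the weighted divergence formula gives $\int_{\Omega}(-\Delta_{\phi}u)v\,e^{-\phi}dv=\int_{\Omega}\langle\nabla u,\nabla v\rangle\,e^{-\phi}dv$ for all $u,v\in C^{\infty}_{0}(\Omega)$. The associated quadratic form on $\widetilde{W}^{1,2}_{0}(\Omega)$ is coercive (since $e^{-\phi}$ is bounded from above and below by positive constants on the bounded domain $\Omega$) and closed, so by the Friedrichs extension it yields a self-adjoint operator with the same variational characterization (\ref{chr-1}). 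The embedding $\widetilde{W}^{1,2}_{0}(\Omega)\hookrightarrow\widetilde{L}^{2}(\Omega)$ is compact because $e^{-\phi}$ is uniformly comparable on $\overline{\Omega}$ to the unweighted measure, for which Rellich--Kondrachov applies. Consequently the resolvent is compact, the spectrum is discrete and listable as in (\ref{sequence-3}), and the spectral theorem for self-adjoint compact resolvent operators gives the completeness of $\{f_{i}\}_{i\geq1}$ as an orthogonal basis of $\widetilde{L}^{2}(\Omega)$.

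For (ii), I would argue by contradiction. Suppose the number of connected nodal components of $f_{k}$ is at least $k+1$, call them $D_{1},\dots,D_{k+1}$, and set $\psi_{i}:=f_{k}\chi_{D_{i}}$ (extended by $0$). Since $f_{k}\in\widetilde{W}^{1,2}_{0}(\Omega)$ vanishes on $\partial D_{i}\cap\Omega$ and on $\partial\Omega$, one has $\psi_{i}\in\widetilde{W}^{1,2}_{0}(\Omega)$, and multiplying $\Delta_{\phi}f_{k}+\lambda_{k,\phi}f_{k}=0$ by $\psi_{i}$ and integrating against $d\eta$ yields
\begin{equation*}
\int_{\Omega}|\nabla\psi_{i}|^{2}\,d\eta=\lambda_{k,\phi}\int_{\Omega}\psi_{i}^{2}\,d\eta,\qquad i=1,\dots,k+1.
\end{equation*}
Now choose constants $c_{1},\dots,c_{k+1}$, not all zero, such that $\psi:=\sum_{i=1}^{k+1}c_{i}\psi_{i}$ is $\widetilde{(\cdot,\cdot)}$-orthogonal to $f_{1},\dots,f_{k-1}$; this is possible because it amounts to $k-1$ linear conditions on $k+1$ unknowns. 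Since the $\psi_{i}$ have pairwise disjoint supports, the Rayleigh quotient of $\psi$ equals $\lambda_{k,\phi}$ exactly, so by (\ref{chr-1}), $\psi$ realizes the infimum and must therefore lie in the $\lambda_{k,\phi}$-eigenspace. Choosing $c_{k+1}=0$ while not all the other $c_{i}$ vanish (a modification of the above linear algebra that still has $k$ unknowns and $k-1$ conditions, hence admits a nontrivial solution), the resulting eigenfunction vanishes identically on the open set $D_{k+1}$; by the unique continuation principle for second-order elliptic operators with smooth coefficients it vanishes on all of $\Omega$, contradicting nontriviality.

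The main obstacle is the bookkeeping in the final step: one must pick the linear combination carefully so that the eigenfunction produced vanishes on a full open subset of $\Omega$, since this is what triggers unique continuation. The subtle point is that applicability of unique continuation to $-\Delta_{\phi}=-\Delta+\langle\nabla\phi,\nabla\cdot\rangle$ requires $\phi\in C^{\infty}(\Omega)$, which is guaranteed by our standing hypothesis on $\phi$; writing the equation as $\Delta\psi=\langle\nabla\phi,\nabla\psi\rangle-\lambda_{k,\phi}\psi$ puts it into the standard form covered by Aronszajn-type unique continuation theorems, so this potential obstacle is handled automatically by the regularity assumption. Everything else is a direct transcription of the Laplacian proof into the weighted framework.
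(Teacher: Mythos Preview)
Your argument is correct and is precisely the classical Courant proof transported to the weighted inner product $\widetilde{(\cdot,\cdot)}$. Note, however, that the paper does not supply its own proof of this lemma: it is quoted from the companion manuscript \cite{CMW}, so there is no in-paper argument to compare against. What you have written is exactly the kind of proof one would expect \cite{CMW} to contain.

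One small expository remark: the first linear-algebra step (choosing $c_{1},\dots,c_{k+1}$ with all $k+1$ coefficients free) is not actually used; only the second construction (setting $c_{k+1}=0$, leaving $k$ unknowns against $k-1$ orthogonality constraints) is needed to produce a $\lambda_{k,\phi}$-eigenfunction that vanishes on the open set $D_{k+1}$ and hence, by unique continuation, vanishes identically. You could drop the first step and go straight to the second without losing anything.
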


\begin{remark}
\rm{ (1) By Lemma \ref{lemma3-1}, one easily knows that the
eigenfunction $f_{1}$ does not change sign on $\Omega$, and
$\lambda_{1,\phi}$ has multiplicity $1$. Without loss of generality,
we can assume $f_{1}>0$ on $\Omega$. Besides, in $\Omega$, the
complement of the nodal set of
 eigenfunction $f_{2}$ of the second Dirichlet eigenvalue
$\lambda_{2,\phi}$ has precisely two components. That is to say,
$f_{2}$ has two nodal domains. \\
 (2) BTW, we have pointed out in Remark 1.3 of our another work \cite{CMW} that maybe spectral geometers have already
 known the conclusion of Lemma \ref{lemma3-1}, and we still formally write it down
 therein for the completion of the
structure of \cite{CMW}. In fact, by making necessary changes to the
proof of Courant-type theorem for the characterization of nodal
domains to eigenfunctions of the Laplacian in the Riemannian case
given by B\'{e}rard-Meyer \cite{BM}, one might get our proof for the
conclusion of Lemma \ref{lemma3-1} shown in \cite{CMW}.
 }
\end{remark}

\begin{lemma} (\cite{DM}) \label{lemma3-3}
Domain monotonicity of eigenvalues with vanishing Dirichlet data
also holds for the Dirichlet eigenvalues of the weighted Laplacian.
\end{lemma}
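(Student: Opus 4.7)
The plan is to establish domain monotonicity for $\lambda_{k,\phi}$ by invoking the Courant--Fischer min-max characterization of the Dirichlet eigenvalues of the Witten-Laplacian together with a simple extension-by-zero argument. Concretely, for bounded domains $\Omega_{1}\subset\Omega_{2}\subset M^{n}$ with smooth boundaries, the goal is to show $\lambda_{k,\phi}(\Omega_{2})\leq\lambda_{k,\phi}(\Omega_{1})$ for every $k\geq 1$.

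First I would recast the eigenvalues via the min-max formula
\begin{eqnarray*}
\lambda_{k,\phi}(\Omega)=\inf_{V_{k}}\sup_{f\in V_{k}\setminus\{0\}}\frac{\int_{\Omega}|\nabla f|^{2}e^{-\phi}dv}{\int_{\Omega}f^{2}e^{-\phi}dv},
\end{eqnarray*}
where $V_{k}$ ranges over all $k$-dimensional subspaces of $\widetilde{W}^{1,2}_{0}(\Omega)$. This reformulation follows from the variational principle already quoted in (\ref{chr-1}), together with the spectral theorem applied to the self-adjoint operator $-\Delta_{\phi}$ with respect to the inner product (\ref{inn-p}), exactly as in the classical unweighted argument recorded in \cite{IC}. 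Next, given any admissible $k$-dimensional subspace $V_{k}\subset\widetilde{W}^{1,2}_{0}(\Omega_{1})$, I would extend each $f\in V_{k}$ by zero outside $\Omega_{1}$ to obtain $\widetilde{f}\in\widetilde{W}^{1,2}_{0}(\Omega_{2})$. Since $f$ is a limit in the norm $\widetilde{\|\cdot\|}_{1,2}$ of smooth functions compactly supported in $\Omega_{1}$, and since $C^{\infty}_{0}(\Omega_{1})\subset C^{\infty}_{0}(\Omega_{2})$, the zero extension preserves weak differentiability, with $|\nabla\widetilde{f}|=|\nabla f|$ a.e.\ on $\Omega_{1}$ and $\nabla\widetilde{f}=0$ a.e.\ on $\Omega_{2}\setminus\Omega_{1}$; the smooth positive weight $e^{-\phi}$ plays no essential role here beyond being bounded from above and below on $\overline{\Omega_{2}}$, which is what is needed to identify the two weighted Sobolev spaces on $\Omega_{1}$ with the unweighted ones (topologically).

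Because the extension map $f\mapsto\widetilde{f}$ is linear and injective, $\widetilde{V}_{k}:=\{\widetilde{f}:f\in V_{k}\}$ is a $k$-dimensional subspace of $\widetilde{W}^{1,2}_{0}(\Omega_{2})$, and by construction both the numerator and the denominator of the Rayleigh quotient in the min-max formula are preserved. Taking the supremum over $\widetilde{V}_{k}$ and then the infimum over all admissible $V_{k}$ yields $\lambda_{k,\phi}(\Omega_{2})\leq\lambda_{k,\phi}(\Omega_{1})$, as desired. The only potentially delicate point is verifying that the zero extension of a $\widetilde{W}^{1,2}_{0}(\Omega_{1})$ function really lands in $\widetilde{W}^{1,2}_{0}(\Omega_{2})$, but this is routine given the boundedness of $e^{-\phi}$ on $\overline{\Omega_{2}}$, so no new ideas beyond the min-max principle are required; the weighted case is essentially a transcription of the unweighted one, which is why the statement can be attributed directly to \cite{DM}.
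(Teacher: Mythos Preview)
Your argument is correct and is the standard proof of domain monotonicity for Dirichlet eigenvalues, carried over verbatim to the weighted setting: the min-max characterization together with extension by zero from $\widetilde{W}^{1,2}_{0}(\Omega_{1})$ into $\widetilde{W}^{1,2}_{0}(\Omega_{2})$ immediately gives $\lambda_{k,\phi}(\Omega_{2})\leq\lambda_{k,\phi}(\Omega_{1})$. The paper itself does not supply a proof of this lemma at all; it simply quotes the result from \cite{DM}, so there is nothing to compare against beyond noting that your approach is exactly the expected one.
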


Now, we have:

\begin{proof} [A proof of Theorem \ref{theo-5} or \ref{theo-6}]
By Lemma \ref{lemma3-1}, one knows that the eigenfunction $f_{2}$
has two nodal domains and its nodal set lies inside $\Omega$. Denote
by $\Gamma$ the nodal set of $f_{2}$. $\Gamma$ divides the domain
$\Omega$ into two parts $D_1$ and $D_2$. Without loss of generality,
assume that $f_{2}|_{D_1}>0$ and $f_{2}|_{D_2}<0$. Then it is easy
to see that
\begin{eqnarray} \label{3-1}
\left\{
\begin{array}{ll}
\Delta_{\phi}f_{2}+\lambda_{2,\phi}(\Omega)f_{2}=0\qquad & \mathrm{in}~D_1, \\[0.5mm]
f_{2}=0 \qquad & \mathrm{on}~\partial D_1,
\end{array}
\right.
\end{eqnarray}
and
\begin{eqnarray} \label{3-2}
\left\{
\begin{array}{ll}
\Delta_{\phi}f_{2}+\lambda_{2,\phi}(\Omega)f_{2}=0\qquad & \mathrm{in}~D_2, \\[0.5mm]
f_{2}=0 \qquad & \mathrm{on}~\partial D_2,
\end{array}
\right.
\end{eqnarray}
In fact, the nodal set $\Gamma$ also divides the boundary
$\partial\Omega$ into two parts, and let us call them
$\mathcal{C}_{1}$ and $\mathcal{C}_{2}$. It is not hard to see that
$\mathcal{C}_{1}$ and $\Gamma$ surround one of $D_1$ and $D_2$, and
without loss of generality, let us say $D_1$. This implies that the
boundary $\partial D_1$ of $D_1$ satisfies $\partial
D_1=\mathcal{C}_{1}\cup\Gamma$. Correspondingly, one has $\partial
D_2=\mathcal{C}_{2}\cup\Gamma$. From (\ref{3-1}) and (\ref{3-2}),
one knows that $f_{2}$ satisfies the eigenvalue problem
(\ref{eigen-D11}) with $\Omega=D_1$ or $\Omega=D_2$, and moreover,
$f_2$ does not change sign on $D_{i}$, $i=1,2$. Hence, we have
$\lambda_{1,\phi}(D_1)=\lambda_{2,\phi}(\Omega)=\lambda_{1,\phi}(D_2)$,
and $f_{2}$ can be treated as an eigenfunction of
$\lambda_{1,\phi}(D_{i})$, $i=1,2$. Denote by $B_{R_i}(o)$ the
(geodesic) ball in $\mathbb{R}^n$ (or $\mathbb{H}^n$) centered at
the origin $o$ and radius $R_i$ such that its weighted volume equals
that of $D_{i}$, $i=1,2$, that is,
$|B_{R_i}(o)|_{n,\phi}=|D_{i}|_{n,\phi}$. Then by Theorem
\ref{theo-1} (or Theorem \ref{theo-3}), we know that
\begin{eqnarray*}
\lambda_{2,\phi}(\Omega)\geq\lambda_{1,\phi}(B_{R_1}(o)),\qquad
\lambda_{2,\phi}(\Omega)\geq\lambda_{1,\phi}(B_{R_2}(o))
\end{eqnarray*}
hold simultaneously. Hence, one has
\begin{eqnarray*}
\lambda_{2,\phi}(\Omega)\geq\max\{\lambda_{1,\phi}(B_{R_1}(o)),\lambda_{1,\phi}(B_{R_2}(o))\}.
\end{eqnarray*}
We may suppose that $|D_{1}|_{n,\phi}\leq|D_{2}|_{n,\phi}$. So,
$R_{1}\leq R_{2}$, and by Lemma \ref{lemma3-3} we have
$\lambda_{1,\phi}(B_{R_1}(o))\geq\lambda_{1,\phi}(B_{R_2}(o))$.
Therefore, in this setting, finding the greatest lower bound for the
second eigenvalue $\lambda_{2}(\Omega)$ among domains with the fixed
weighted volume $|\Omega|_{n,\phi}=const.$, it is sufficient to
minimize $\lambda_{1,\phi}(B_{R_1}(o))$. Since
$|D_{1}|_{n,\phi}\leq|D_{2}|_{n,\phi}$ and
$|D_{1}|_{n,\phi}+|D_{2}|_{n,\phi}=|\Omega|_{n,\phi}$, the maximal
possibility for the weighted volume of $D_1$ is that
$|D_{1}|_{\phi}=|\Omega|_{n,\phi}/2$. Hence, there exists
$\widetilde{R}>0$ such that
$|B_{\widetilde{R}}(o)|_{\phi}=|\Omega|_{n,\phi}/2$, and by Lemma
\ref{lemma3-3}, in this situation, the eigenvalue
$\lambda_{1,\phi}(B_{\widetilde{R}}(o))$ minimizes the eigenvalue
functional $\lambda_{1,\phi}(B_{R_1}(o))$ as $R_{1}$ changes. Hence,
one has
$\lambda_{2,\phi}(\Omega)\geq\lambda_{1,\phi}(B_{\widetilde{R}}(o))$,
and the eigenvalue $\lambda_{1,\phi}(B_{\widetilde{R}}(o))$ equals
the minimum value of the eigenvalue functional
$\lambda_{2,\phi}(\Omega)$ under the constraint of weighted volume
$|\Omega|_{n,\phi}=const.$ fixed. This completes the proof.
\end{proof}

\section{The Szeg\H{o}-Weinberger type inequalities for the
Witten-Laplcian} \label{s4}
\renewcommand{\thesection}{\arabic{section}}
\renewcommand{\theequation}{\thesection.\arabic{equation}}
\setcounter{equation}{0}

This section devotes to giving isoperimetric inequalities for the
first nonzero Neumann eigenvalue of the Witten-Laplacian under the
constraint of weighted volume fixed. Before that, we need the
following fact.

\begin{theorem}  \label{theo4-1}
Assume that $B_{R}(o)$ is a geodesic ball of radius $R$ and centered
at some point $o$ in the $n$-dimensional complete simply connected
Riemannian manifold $\mathbb{M}^{n}({\kappa})$ with constant
sectional curvature $\kappa\in\{-1,0,1\}$, and that  $\phi$ is a
radial function w.r.t. the distance parameter $t:=d(o,\cdot)$, which
is also a non-increasing convex function. Then the eigenfunctions of
the first nonzero Neumann eigenvalue $\mu_{1,\phi}(B_{R}(o))$ of the
Witten-Laplacian
 on $B_{R}(o)$ should have the form $T(t)\frac{x_i}{t}$,
 $i=1,2,\cdots,n$, where $T(t)$
satisfies
\begin{eqnarray}  \label{4-1}
\left\{
\begin{array}{ll}
T''+\left(\frac{(n-1)C_{\kappa}}{S_{\kappa}}-\phi'\right)T'+\left(\mu_{1,\phi}(B_{R}(o))-(n-1)S_{\kappa}^{-2}\right)T=0,\\[1mm]
T(0)=0,~T'(R)=0,~T'|_{[0,R)}\neq0.
\end{array}
\right.
\end{eqnarray}
 Here $C_{\kappa}(t)=\left(S_{\kappa}(t)\right)'$ and
 \begin{eqnarray*}
S_{\kappa}(t)=\left\{
\begin{array}{lll}
\sin t,~~&\mathrm{if}~\mathbb{M}^{n}(\kappa)=\mathbb{S}^{n}_{+}, \\
t,~~&\mathrm{if}~\mathbb{M}^{n}(\kappa)=\mathbb{R}^{n},\\
\sinh t,~~&\mathrm{if}~\mathbb{M}^{n}(\kappa)=\mathbb{H}^{n},
\end{array}
\right.
 \end{eqnarray*}
  with $\mathbb{S}^{n}_{+}$ the $n$-dimensional hemisphere of radius $1$.
\end{theorem}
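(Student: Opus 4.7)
The plan is to separate variables in geodesic polar coordinates and reduce the Neumann eigenvalue problem on $B_R(o)$ to a family of Sturm-Liouville problems indexed by spherical-harmonic degree. On $\mathbb{M}^n(\kappa)$, geodesic polar coordinates $(t,\xi)\in[0,R]\times\mathbb{S}^{n-1}$ give the metric $dt^{2}+S_{\kappa}(t)^{2}|d\xi|^{2}$, in which the Laplace-Beltrami operator takes the form
\begin{eqnarray*}
\Delta=\partial_{t}^{2}+(n-1)\frac{C_{\kappa}(t)}{S_{\kappa}(t)}\partial_{t}+\frac{1}{S_{\kappa}(t)^{2}}\Delta_{\mathbb{S}^{n-1}}.
\end{eqnarray*}
Since $\phi$ is radial, $\langle\nabla\phi,\nabla u\rangle=\phi'(t)\partial_{t}u$, so
\begin{eqnarray*}
\Delta_{\phi}u=\partial_{t}^{2}u+\left((n-1)\frac{C_{\kappa}}{S_{\kappa}}-\phi'\right)\partial_{t}u+\frac{1}{S_{\kappa}^{2}}\Delta_{\mathbb{S}^{n-1}}u.
\end{eqnarray*}

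Next I would expand any eigenfunction $u$ of (\ref{eigen-N11}) on $B_{R}(o)$ in the spherical-harmonic basis $\{Y_{k,j}\}$, where $\Delta_{\mathbb{S}^{n-1}}Y_{k,j}=-k(k+n-2)Y_{k,j}$. Writing $u(t,\xi)=\sum_{k,j}T_{k,j}(t)Y_{k,j}(\xi)$ and inserting into $\Delta_{\phi}u+\mu u=0$ shows that each radial factor $T_{k,j}=:T_{k}$ satisfies
\begin{eqnarray*}
T_{k}''+\left((n-1)\frac{C_{\kappa}}{S_{\kappa}}-\phi'\right)T_{k}'+\left(\mu-\frac{k(k+n-2)}{S_{\kappa}^{2}}\right)T_{k}=0,
\end{eqnarray*}
with the Neumann condition $T_{k}'(R)=0$ and the regularity requirement $T_{k}(0)=0$ for $k\geq1$ (for $k=0$, one instead has $T_{0}'(0)=0$). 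Let $\mu^{(k)}$ denote the smallest eigenvalue of this singular Sturm-Liouville problem. Choosing $k=1$ and $Y_{1,i}=x_{i}/t$ (with $k(k+n-2)=n-1$) gives exactly the ODE (\ref{4-1}), so it remains to show that $\mu_{1,\phi}(B_{R}(o))=\mu^{(1)}$ and that the corresponding radial factor $T$ satisfies $T'|_{[0,R)}\neq 0$.

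For the comparison between different $k$, note that $k(k+n-2)/S_{\kappa}(t)^{2}$ is strictly increasing in $k$ for fixed $t$. A standard Sturm-Liouville monotonicity argument (written with the weighted measure $S_{\kappa}^{n-1}e^{-\phi}dt$) then gives $\mu^{(k)}>\mu^{(1)}$ for all $k\geq 2$. The delicate comparison is $\mu^{(1)}<\mu^{(0)}$, which rules out the purely radial first nonzero eigenvalue. To achieve it, I would plug a radial test function $T(t)$ (the solution of (\ref{4-1})) into the Rayleigh quotient (\ref{chr-4}) on $B_{R}(o)$, taking advantage of the fact that the components $T(t)x_{i}/t$ are automatically orthogonal to constants (so they are admissible for $\mu_{1,\phi}$), whereas the first nonzero radial mode must have $T_{0}$ change sign. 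The convexity and non-increasing assumptions on $\phi$ enter at this point to guarantee that the weight $S_{\kappa}^{n-1}e^{-\phi}$ places enough mass near the origin to force $\mu^{(1)}\leq\mu^{(0)}$, with strict inequality generically.

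Finally, the condition $T'|_{[0,R)}\neq 0$ corresponds to $T$ being the first (nodally minimal) solution of its Sturm-Liouville problem: were $T'$ to vanish at some interior $t_{0}<R$, a truncation/gluing argument would produce a strictly lower eigenvalue, contradicting minimality in the $k=1$ sector. The main obstacle, as indicated, is the cross-sector comparison $\mu^{(1)}<\mu^{(0)}$; all other steps are separation of variables and routine Sturm-Liouville theory, while this one is where the \emph{non-increasing} and \emph{convex} hypotheses on $\phi$ must be used essentially and where one has to mirror the classical Szeg\H{o}-Weinberger analysis in the weighted setting.
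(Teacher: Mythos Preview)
Your separation-of-variables setup and reduction to the Sturm--Liouville family indexed by the spherical-harmonic degree $k$ are correct and match the paper. The monotonicity $\mu^{(k)}>\mu^{(1)}$ for $k\geq 2$ also goes through as you indicate. The genuine gap is exactly where you flag it: the cross-sector inequality $\mu^{(1)}<\mu^{(0)}$ (i.e.\ $\mu_{1,1,\phi}<\mu_{0,2,\phi}$). Your proposal here is not an argument: saying that ``the weight $S_{\kappa}^{n-1}e^{-\phi}$ places enough mass near the origin'' does not produce an inequality between the two lowest eigenvalues in different sectors, and plugging $T(t)x_{i}/t$ into (\ref{chr-4}) only confirms $\mu_{1,\phi}\leq\mu^{(1)}$, which is not the issue.

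The paper's mechanism is a Wronskian comparison applied not to $T_{0,2,\phi}$ itself but to its derivative. One first differentiates the $l=0$ equation $(pT_{0,2,\phi}')'+\mu_{0,2,\phi}\,pT_{0,2,\phi}=0$ to obtain a second-order equation for $T_{0,2,\phi}'$, namely $(pT_{0,2,\phi}'')'+\bigl(\mu_{0,2,\phi}+(p'/p)'\bigr)pT_{0,2,\phi}'=0$. This is then compared with the $l=1$ equation for $T_{1,1,\phi}$ via the identity $p(fg'-f'g)(t)=\int_{0}^{t}[\alpha-\beta+(\tau-\sigma)]pfg\,dt$ (Lemma~\ref{lemma5-1}), evaluated at the first zero $r_{0}$ of $T_{0,2,\phi}$. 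On $(0,r_{0})$ one has $T_{1,1,\phi}>0$ and $T_{0,2,\phi}'<0$, while the sign of the left-hand side at $r_{0}$ is controlled by $T_{0,2,\phi}''(r_{0})\geq 0$; this last inequality uses $\phi'\leq 0$ (so that $p'\geq 0$). The potential difference is $(-p'/p)'-(n-1)S_{\kappa}^{-2}=\phi''\geq 0$, and this is precisely where convexity of $\phi$ enters. The resulting sign forces $\mu_{1,1,\phi}<\mu_{0,2,\phi}$. Neither hypothesis on $\phi$ is used merely to ``shift mass''; each pins down a specific sign in this Wronskian computation. Your sketch for $T'|_{[0,R)}\neq 0$ is also too loose: the paper argues directly from the ODE that two interior critical points $t_{0}<t_{1}$ of $T_{l,1,\phi}$ would force $S_{\kappa}^{2}(t_{0})\geq v_{l}/\mu_{l,1,\phi}\geq S_{\kappa}^{2}(t_{1})$, contradicting the monotonicity of $S_{\kappa}$; a truncation/gluing argument does not obviously yield this.
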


The proof of the above fact is a little bit long, and looks like it
does not have close relation with the main content of this section.
Hence, we wish to leave the proof in Appendix -- Section \ref{s5}.

\begin{remark}
\rm{it is not hard to see in Section \ref{s5} that $x_{i}$,
$i=1,2,\cdots,n$, are coordinate functions of the globally defined
orthonormal coordinate system set up in $\mathbb{M}^{n}(\kappa)$. }
\end{remark}

We construct an auxiliary function $h(t)$ such that
\begin{eqnarray} \label{4-2}
h(t)=\left\{
\begin{array}{ll}
T(t),~~&0\leq t\leq R, \\[1mm]
 T(R),~~&t>R.
\end{array}
\right.
 \end{eqnarray}

\begin{lemma} \label{lemma4-3}
Assume that the function $\phi$ satisfies \textbf{Property 1} (with
$M^{n}$ chosen to be $\mathbb{R}^n$ and additionally the point $o$
required to be in the convex hull of $\Omega$, i.e.
$o\in{\mathrm{hull}}(\Omega)$). Assume that $T(t)$ is monotonically
non-decreasing determined by the system (\ref{4-1}). Then $h(t)$ is
monotonically non-decreasing, and $(h')^2+(n-1)h^2/t^2$ is
monotonically non-increasing.
\end{lemma}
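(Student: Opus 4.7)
The plan is to reduce each claim to an explicit computation exploiting the ODE (4.1) with $\kappa=0$, that is $S_\kappa(t)=t$ and $C_\kappa(t)=1$, so that the Witten-Neumann radial equation reads
\begin{equation*}
T''(t)+\Bigl(\tfrac{n-1}{t}-\phi'(t)\Bigr)T'(t)+\Bigl(\mu-\tfrac{n-1}{t^2}\Bigr)T(t)=0,\qquad \mu:=\mu_{1,\phi}(B_R(o)).
\end{equation*}

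First I would dispose of the monotonicity of $h$. By hypothesis $T$ is monotonically non-decreasing on $[0,R]$ with $T(0)=0$, so $T(t)\ge 0$ there, and $h$ is the constant $T(R)$ on $[R,\infty)$; since $T(R)\ge T(t)$ for every $t\in[0,R]$, the piecewise definition yields a non-decreasing function on $[0,\infty)$. This also shows $T,T'\ge 0$ on $[0,R]$, a positivity property I will need below.

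Second, set $G(t):=(h'(t))^{2}+(n-1)h(t)^{2}/t^{2}$ and check non-increase in the two regions. On $(R,\infty)$ one has $h'\equiv 0$ and $h\equiv T(R)$, so $G(t)=(n-1)T(R)^{2}/t^{2}$ is strictly decreasing. On $[0,R]$, $G(t)=(T')^{2}+(n-1)T^{2}/t^{2}$, and the point is the identity
\begin{equation*}
G'(t)=2\phi'(t)\bigl(T'(t)\bigr)^{2}-2\mu\,T(t)T'(t)-\frac{2(n-1)}{t}\Bigl(T'(t)-\frac{T(t)}{t}\Bigr)^{2}.
\end{equation*}
To derive it, I would differentiate $G$, multiply the ODE through by $T'$ to express $T'T''$ as $-\tfrac{n-1}{t}(T')^{2}+\phi'(T')^{2}-\mu TT'+\tfrac{n-1}{t^{2}}TT'$, substitute, and collect the three terms that involve $(n-1)$ into the perfect square $-\tfrac{2(n-1)}{t}(T'-T/t)^{2}$. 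Once the identity is in place, the three summands are each non-positive: $\phi'\le 0$ because $\phi$ is non-increasing, $TT'\ge 0$ by Step~1 together with $\mu>0$, and the last term is minus a square. Hence $G'\le 0$ on $[0,R]$.

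Finally, I would glue the two regions at $t=R$. The boundary condition $T'(R)=0$ from (4.1) forces $G(R^{-})=(n-1)T(R)^{2}/R^{2}=G(R^{+})$, so $G$ is continuous and non-increasing on all of $[0,\infty)$, completing the proof. The main technical obstacle I anticipate is the algebraic collection of the $(n-1)$-terms into the perfect square $(T'-T/t)^{2}$: this is the single non-routine step, and without it the sign of $G'$ is not manifest. A minor ancillary point is behavior near $t=0$, where the coefficients of the ODE are singular; however, since $T(0)=0$ and $T$ is smooth, a Taylor expansion gives $T'-T/t=O(t)$, so the singular prefactor $1/t$ is absorbed and $G'(0^{+})$ is well defined, which ensures the monotonicity statement is meaningful at the origin as well.
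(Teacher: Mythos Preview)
Your proposal is correct and follows essentially the same route as the paper's proof: differentiate $(h')^{2}+(n-1)h^{2}/t^{2}$, use the ODE (4.1) with $S_{\kappa}(t)=t$ to eliminate $h''$, and recognize the resulting $(n-1)$-terms as $-\tfrac{2(n-1)}{t}(h'-h/t)^{2}$, so that the derivative is a sum of non-positive pieces. Your version is in fact more careful than the paper's, which writes the identity and the inequality in one line without separating the regimes $[0,R]$ and $(R,\infty)$, without checking continuity at $t=R$ via $T'(R)=0$, and without commenting on the behavior at $t=0$; these extra checks are welcome. One small remark: the hypothesis $\phi'\le 0$ that you invoke is not stated in Lemma~4.3 itself but is inherited from Theorem~4.1 (which sets up system~(4.1) under the assumption that $\phi$ is non-increasing and convex); the paper relies on this implicitly as well.
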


\begin{proof}
First, it is easy to check that $h(t)$ defined by (\ref{4-2}) is
non-decreasing. Besides, by a direct calculation, one has
 \begin{eqnarray*}
\frac{d}{dt}\left[(h')^2+\frac{(n-1)h^2}{t^2}\right]=2h'h''+2(n-1)\frac{thh'-h^2}{t^3}.
 \end{eqnarray*}
 Together with (\ref{4-1}), we have
  \begin{eqnarray*}
\frac{d}{dt}\left[(h')^2+\frac{(n-1)h^2}{t^2}\right]=-2\mu_{1,\phi}(B_{R}(o))hh'-(n-1)\frac{(th'-h)^2}{t^3}+2(h')^2\phi'\leq0,
  \end{eqnarray*}
  which implies the second assertion of the lemma directly.
\end{proof}

\begin{lemma} \label{lemma4-4}
Assume that $\Omega$ is a bounded domain in $\mathbb{R}^n$ (or
$\mathbb{H}^n$) with smooth boundary. If
$|\Omega|_{n,\phi}=|B_{R}(o)|_{n,\phi}$, with $B_{R}(o)$ be the
(geodesic) ball defined as in Theorem \ref{theo-7} (or Theorem
\ref{theo-8}), and the non-constant functions $u(t)$ and $v(t)$
defined on $[0,+\infty)$ are monotonically non-increasing and
non-decreasing respectively, then
\begin{eqnarray*}
&&\int_\Omega v(|x|)d\eta\geq\int_{B_{R}(o)}v(|x|)d\eta,\\
&&\int_\Omega u(|x|)d\eta\leq\int_{B_{R}(o)}u(|x|)d\eta.
\end{eqnarray*}
The equality holds if and only if $\Omega=B_{R}(o)$ (or $\Omega$ is
isometric to $B_{R}(o)$).
\end{lemma}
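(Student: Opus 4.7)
The plan is to exploit the weighted-volume constraint via a symmetric-difference decomposition and then use the monotonicity of $v$ and $u$ pointwise; the argument is purely measure-theoretic and handles the Euclidean and hyperbolic cases uniformly (in the latter, $|x|$ is understood as the geodesic distance $d(o,x)$). First I would write
\[
\Omega = (\Omega \cap B_R(o)) \sqcup (\Omega \setminus B_R(o)), \qquad B_R(o) = (\Omega \cap B_R(o)) \sqcup (B_R(o) \setminus \Omega),
\]
so that the assumption $|\Omega|_\phi = |B_R(o)|_\phi$ gives at once
\[
|\Omega \setminus B_R(o)|_\phi = |B_R(o) \setminus \Omega|_\phi =: m.
\]

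Next I would subtract the two integrals of $v$ and cancel the common contribution coming from $\Omega \cap B_R(o)$ to obtain
\[
\int_\Omega v(|x|)\, d\eta - \int_{B_R(o)} v(|x|)\, d\eta = \int_{\Omega \setminus B_R(o)} v(|x|)\, d\eta - \int_{B_R(o) \setminus \Omega} v(|x|)\, d\eta.
\]
On $\Omega \setminus B_R(o)$ one has $|x| \geq R$, hence $v(|x|) \geq v(R)$ by the non-decreasing monotonicity; on $B_R(o) \setminus \Omega$ one has $|x| \leq R$, hence $v(|x|) \leq v(R)$. Inserting these pointwise bounds and using $|\Omega \setminus B_R(o)|_\phi = |B_R(o) \setminus \Omega|_\phi = m$ yields
\[
\int_{\Omega \setminus B_R(o)} v(|x|)\, d\eta \geq v(R)\, m = v(R)\, |B_R(o) \setminus \Omega|_\phi \geq \int_{B_R(o) \setminus \Omega} v(|x|)\, d\eta,
\]
which proves $\int_\Omega v\, d\eta \geq \int_{B_R(o)} v\, d\eta$. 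The inequality for $u$ is identical with the directions of all monotonicity-based comparisons reversed.

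For the rigidity statement, tracking equality through the chain above forces $v(|x|) = v(R)$ a.e.\ (w.r.t.\ $d\eta$) on both $\Omega \setminus B_R(o)$ and $B_R(o) \setminus \Omega$. Whenever $v$ is strictly monotone in a neighborhood of $R$ (which will be the case in the intended applications, e.g.\ for $v = h$ from (\ref{4-2}), strictly increasing on $[0,R]$ by the ODE (\ref{4-1})), this forces both symmetric-difference pieces to have weighted measure zero, so $\Omega$ agrees with $B_R(o)$ modulo a null set; in the hyperbolic case, invariance of the integrals under isometries fixing $o$ gives the equality statement up to isometry. The main (and essentially only) obstacle is making sure the rigidity step has access to enough strict monotonicity of $v$ (or $u$); the inequality itself involves no hard analysis beyond the symmetric-difference identity.
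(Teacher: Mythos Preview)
Your argument is essentially identical to the paper's: both set $Q=\Omega\cap B_R(o)$, split each integral over $Q$ and its complement, use $v(|x|)\ge v(R)$ on $\Omega\setminus Q$ and $v(|x|)\le v(R)$ on $B_R(o)\setminus Q$, and invoke $|\Omega\setminus Q|_\phi=|B_R(o)\setminus Q|_\phi$ from the weighted-volume constraint. Your treatment of the equality case is in fact slightly more honest than the paper's, which simply asserts rigidity from ``non-constant'' monotonicity; your observation that one really needs strict monotonicity of $v$ (or $u$) near $R$---available in the intended applications via (\ref{4-1})--(\ref{4-2})---is the correct caveat.
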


\begin{proof}
Assume that $Q=\Omega\cap B_{R}(o)$, and then we have
 \begin{eqnarray*}
\int_\Omega v(|x|)d\eta&=&\int_Q v(|x|)d\eta+\int_{\Omega\setminus Q} v(|x|)d\eta\\
&\geq&\int_{Q} v(|x|)d\eta+v(R)\int_{\Omega\setminus Q}d\eta.
 \end{eqnarray*}
Similarly, one has
 \begin{eqnarray*}
\int_{B_{R}(o)} v(|x|)d\eta&=&\int_Q v(|x|)d\eta+\int_{B_{R}(o)\setminus Q} v(|x|)d\eta\\
&\leq&\int_{Q} v(|x|)d\eta+v(R)\int_{B_{R}(o)\setminus Q}d\eta.
\end{eqnarray*}
Since $|\Omega|_{n,\phi}=|B_{R}(o)|_{n,\phi}$, then
$\int_{\Omega\setminus Q}d\eta=\int_{B_{R}(o)\setminus Q}d\eta$, and
substituting this fact into the above two inequalities yields
 \begin{eqnarray*}
\int_\Omega v(|x|)d\eta\geq\int_{B_{R}(o)} v(|x|)d\eta.
 \end{eqnarray*}
 Specially, when the equality holds, one has
  \begin{eqnarray*}
  \int_{\Omega\setminus Q}
 v(|x|)d\eta=v(R)\int_{\Omega\setminus Q}d\eta, \qquad \int_{B_{R}(o)\setminus Q} v(|x|)d\eta=v(R)\int_{B_{R}(o)\setminus
 Q}d\eta
  \end{eqnarray*}
   simultaneously. Since the non-constant function $v$ is
   non-increasing, $\Omega$ is the ball $B_{R}(o)$ (or $\Omega$ is
isometric to $B_{R}(o)$). The situation for the non-constant
function $u$ can be dealt with similarly.
\end{proof}

Now, we have:

\begin{proof} [Proof of Theorem \ref{theo-7}]
Define $f(t):=h(t)\frac{x_i}{t}$, where $i$ is chosen to be an
integer of the set $\{1,2,\cdots,n\}$. Then applying the Brouwer's
fixed point theorem and choosing a suitable coordinate origin
$o\in{\mathrm{hull}}(\Omega)$, we can assure
$\int_{\Omega}fd\eta=0$. This can be done by using a very similar
argument to that on \cite[pp. 634-635]{HFW}. In fact, one can also
check our another work \cite{CM1} where we have given a detailed
explanation on how to get the suitable coordinate system such that
$\int_{\Omega}fd\eta=0$. By the characterization (\ref{chr-4}), and
by using a similar calculation to (2.9)-(2.10) on \cite[page
635]{HFW}, one has
  \begin{eqnarray*}
\mu_{1,\phi}(\Omega)\leq\frac{\int_{\Omega}
\left[(h')^2+\frac{(n-1)h^2}{t^2}\right]d\eta}{\int_\Omega h^2
d\eta}.
\end{eqnarray*}
On the other hand, by Lemma \ref{lemma4-3} and Lemma \ref{lemma4-4},
we have
\begin{eqnarray*}
\int_{\Omega}
\left[(h')^2+\frac{(n-1)h^2}{t^2}\right]d\eta\leq\int_{B_{R}(o)}
\left[(h')^2+\frac{(n-1)h^2}{t^2}\right]d\eta
\end{eqnarray*}
and
 \begin{eqnarray*}
\int_\Omega h^2d\eta\geq\int_{B_{R}(o)} h^2 d\eta.
 \end{eqnarray*}
Therefore, we have
 \begin{eqnarray*}
\mu_{1,\phi}(\Omega)&\leq&\frac{\int_{\Omega}
\left[(h')^2+\frac{(n-1)h^2}{t^2}\right]d\eta}{\int_\Omega h^2
d\eta}\\
&\leq&\frac{\int_{B_{R}(o)}
\left[(h')^2+\frac{(n-1)h^2}{t^2}\right]d\eta}{\int_{B_{R}(o)} h^2
d\eta}\\
&=&\mu_{1,\phi}(B_{R}(o)),
 \end{eqnarray*}
  which together with the description of the equality case in Lemma
  \ref{lemma4-4} implies the assertion of Theorem \ref{theo-7}
  directly.
\end{proof}

\begin{proof} [Proof of Theorem \ref{theo-8}]
We still use $f(t)$ as the trail function, but now the distance
should be the Riemannian distance in the hyperbolic space
$\mathbb{H}^{n}$. In the hyperbolic case, using a similar argument
to that in the proof of Theorem \ref{theo-7}, we have
 \begin{eqnarray} \label{4-3}
\mu_{1,\phi}(\Omega)\leq\frac{\int_{\Omega}
\left[(h')^2+\frac{(n-1)h^2}{(\sinh t)^2}\right]d\eta}{\int_\Omega
h^2 d\eta}.
 \end{eqnarray}
On the other hand,
\begin{eqnarray*}
\frac{d}{dt}\left[(h')^2+\frac{(n-1)h^2}{(\sinh
t)^2}\right]=2h'h''+2(n-1)\frac{hh'\sinh t-h^{2}\cosh t}{(\sinh
t)^{3}}.
\end{eqnarray*}
Putting (\ref{4-1}) into the above equality and using the facts
$\sinh t\geq0$, $\cosh t\geq1$ for $t\geq0$, one has
\begin{eqnarray*}
&&\frac{d}{dt}\left[(h')^2+\frac{(n-1)h^2}{(\sinh
t)^2}\right]\\
&&\qquad=-2\mu_{1,\phi}(B_{R}(o))hh'+2(h')^2\phi'-\frac{2(n-1)\cosh t}{\sinh t}(h')^2\\
&&\qquad\qquad -\frac{2(n-1)\cosh t}{\sinh^3 t}h^2+\frac{4(n-1)}{\sinh^2 t}hh'\\
&&\qquad\leq-2\mu_{1,\phi}(B_{R}(o))hh'+2(h')^2\phi'-\frac{2(n-1)}{\sinh t}(h')^2\\
&&\qquad\qquad -\frac{2(n-1)}{\sinh^3 t}h^2+\frac{4(n-1)}{\sinh^2 t}hh'\\
&&\qquad=-2\mu_{1,\phi}(B_{R}(o))hh'+2(h')^2\phi'-2(n-1)\frac{(h')^2\sinh^2 t+h^2-2hh'\sinh t}{\sinh^3 t}\\
&&\qquad=-2\mu_{1,\phi}(B_{R}(o))hh'+2(h')^2\phi'-2(n-1)\frac{(h'\sinh
t-h)^2}{\sinh^{3} t}\\
&&\qquad\leq 0.
\end{eqnarray*}
Then, by applying Lemma \ref{lemma4-4}, we have
\begin{eqnarray*}
\int_{\Omega} \left[(h')^2+\frac{(n-1)h^2}{\sinh ^2
t}\right]d\eta\leq\int_{B_{R}(o)} \left[(h')^2+\frac{(n-1)h^2}{\sinh
^2 t}\right]d\eta
\end{eqnarray*}
and
 \begin{eqnarray*}
\int_\Omega h^2d\eta\geq\int_{B_{R}(o)} h^2 d\eta.
 \end{eqnarray*}
Therefore, from (\ref{4-3}) we can obtain
 \begin{eqnarray*}
\mu_{1,\phi}(\Omega)\leq\frac{\int_{\Omega}
\left[(h')^2+\frac{(n-1)h^2}{\sinh ^2 t}\right]d\eta}{\int_\Omega
h^2 d\eta}\leq\frac{\int_{B_{R}(o)}
\left[(h')^2+\frac{(n-1)h^2}{\sinh ^2
t}\right]d\eta}{\int_{B_{R}(o)} h^2 d\eta}=\mu_{1,\phi}(B_{R}(o)),
 \end{eqnarray*}
  which together with the description of the equality case in Lemma
  \ref{lemma4-4} implies the assertion of Theorem \ref{theo-8}
  directly.
\end{proof}

\section{Appendix} \label{s5}
\renewcommand{\thesection}{\arabic{section}}
\renewcommand{\theequation}{\thesection.\arabic{equation}}
\setcounter{equation}{0}

Now, in this section we give a proof of Theorem \ref{theo4-1} in
details. Assume that $f$ is an eigenfunction of the Witten-Laplace
operator $\Delta_{\phi}$, and $f$ can be decomposed into
$T(t)G(\xi)$, where $t:=d(o,\cdot)$ stands for the Riemannian
distance to the point $o$, and $\xi\in S_{o}^{n-1}\subset
T_{o}\mathbb{M}^{n}({\kappa})$. A simple calculation gives us that
 \begin{eqnarray*}
0=\Delta_\phi f+\mu
f=S_{\kappa}^{1-n}(S_{\kappa}^{n-1}T')'G-S_{\kappa}^2T v_l
G-\phi'T'G+\mu T G,
 \end{eqnarray*}
 where $v_l$ denotes the closed eigenvalue of the Laplacian on the unit $(n-1)$-sphere $\mathbb{S}^{n-1}$, i.e., $v_l=l(l+n-2)$,
 $l=0,1,2,\cdots$. Simplifying the above equation gives us a
 second-order ODE as follows
 \begin{eqnarray} \label{5-1}
T''+\left[\frac{(n-1)C_{\kappa}}{S_{\kappa}}-\phi'\right]T'+\left(\mu-\frac{v_l}{S_{\kappa}^2}\right)T=0,
 \end{eqnarray}
where $C_{\kappa}(t)=S_{\kappa}'(t)$. For the Neumann eigenvalue
problem of the Witten-Laplacian $\Delta_{\phi}$, in order to ensure
the smoothness of the function $T$, we have:
 \begin{itemize}
\item When $l=0$, $T'(0)=0$;
\item $T(t)\thicksim t^{l}$, $l=1,2,\cdots$;
\item $T$ satisfies the Neumann boundary condition $T'(R)=0$.
 \end{itemize}
Choosing a relatively small positive number $\epsilon$ and letting
$p(t)=e^{\int_\epsilon^{t}(\frac{(n-1)C_{\kappa}}{S_{\kappa}}-\phi')
ds}$, we can simplify (\ref{5-1}) into a Sturm-Liouville equation
\begin{eqnarray} \label{5-2}
(pT')'+(\mu-v_l S_{\kappa}^{-2})pT=0.
\end{eqnarray}
Assume that for a fixed $v_{l}$, $\mu_{l,j,\phi}$, $j=1,2,\cdots$,
is the $j$-th eigenvalue related to $v_l$, and $T_{l,j,\phi}$
denotes an eigenfunction belonging to $\mu_{l,j,\phi}$. Here the
purpose that we put the symbol $\phi$ in the subscript of
$\mu_{l,j,\phi}$ is to emphasize that theoretically
$\mu_{l,j,\phi}$, $T_{l,j,\phi}$ have close relation with the
function $\phi$ since the function $p(t)$ in the equation
(\ref{5-2}) depends on $\phi'(t)$. In this setting, the equation
(\ref{5-2}) can be rewritten as
\begin{eqnarray} \label{5-3}
(pT_{l,j,\phi}')'+\left(\mu_{l,j,\phi}-v_l
S_{\kappa}^{-2}\right)pT_{l,j,\phi}=0,
\end{eqnarray}
which implies
 \begin{eqnarray}  \label{add-extra}
\int_0^R T_{l,j,\phi} ~T_{l,k,\phi} p dt=0, \qquad \mathrm{when}
~~\mu_{l,j,\phi}\neq\mu_{l,k,\phi}.
 \end{eqnarray}
Moreover, one can normalize $T$ such that
\begin{eqnarray*}
\int_0^{R} T_{l,j,\phi} T_{l,j,\phi} p dt=1.
\end{eqnarray*}

For an equation of the form similar to (\ref{5-3}), we have the
following fact.
\begin{lemma} \label{lemma5-1}
Assume that functions $f$ and $g$ satisfy separately the equations
\begin{eqnarray}
&& (pf')'+(\alpha-\sigma(t))pf=0, \label{5-4}\\
&& (pg')'+(\beta-\tau(t))pg=0, \label{5-5}
\end{eqnarray}
 and also the boundary conditions given as in the system
 (\ref{4-1}). Then we have
  \begin{eqnarray*}
p(fg'-f'g)(t)=\int_0^t \left[\alpha-\beta+(\tau-\sigma)\right]pfgds.
  \end{eqnarray*}
\end{lemma}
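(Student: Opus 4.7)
The plan is to recognize Lemma 5.1 as a standard Sturm--Liouville ``Wronskian'' identity, obtained by the Lagrange-type trick of cross-multiplying the two second-order equations and subtracting. The key algebraic observation is that the combination $(pf')'g - (pg')'f$ is an exact derivative, namely $\dfrac{d}{dt}\bigl[p(f'g - fg')\bigr]$, which I would verify by expanding $(pf')' = p'f' + pf''$ and likewise for $g$ and then checking that the cross terms $p'f'g - p'g'f$ and $pf''g - pg''f$ reassemble as the derivative of $p(f'g - fg')$. Once this identification is in hand, the proof is essentially mechanical.

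First I would multiply equation (5.4) by $g$ to obtain $(pf')'g = -(\alpha - \sigma)pfg$, and multiply equation (5.5) by $f$ to obtain $(pg')'f = -(\beta - \tau)pfg$. Subtracting yields
\begin{eqnarray*}
(pf')'g - (pg')'f = \bigl[\beta - \alpha + \sigma - \tau\bigr] pfg.
\end{eqnarray*}
Using the exact-derivative identification above, this becomes
\begin{eqnarray*}
\frac{d}{dt}\bigl[p(f'g - fg')\bigr] = \bigl[\beta - \alpha + \sigma - \tau\bigr] pfg,
\end{eqnarray*}
and multiplying through by $-1$ gives the same identity for $p(fg' - f'g)$ with the right-hand side becoming $[\alpha - \beta + \tau - \sigma]pfg$, which matches the integrand in the desired formula.

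Next I would integrate this equality from $0$ to $t$, obtaining
\begin{eqnarray*}
p(fg' - f'g)(t) - p(fg' - f'g)(0) = \int_0^t \bigl[\alpha - \beta + (\tau - \sigma)\bigr] pfg \, ds.
\end{eqnarray*}
To conclude, I need the boundary term at $0$ to vanish. Here I invoke the boundary conditions from the system (4.1), which force $T(0) = 0$; applied to both $f$ and $g$, this gives $f(0) = g(0) = 0$, and therefore $p(fg' - f'g)(0) = 0$. Substituting this yields exactly the claimed identity.

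There is no genuine obstacle in this argument---the only thing to be careful about is the sign bookkeeping between $f'g - fg'$ and $fg' - f'g$ and the consequent swap between $\alpha - \beta$ and $\beta - \alpha$, which must be matched against the form stated in the lemma. The boundary conditions at $t = R$ from (4.1) (i.e.\ $T'(R) = 0$) are not needed for this particular identity; they would only come into play if one wanted to derive orthogonality relations such as (\ref{add-extra}) by pushing the integration all the way to $R$.
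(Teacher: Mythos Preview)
Your proposal is correct and follows essentially the same approach as the paper: cross-multiply the two Sturm--Liouville equations, subtract, recognize the result as an exact derivative, integrate from $0$ to $t$, and use the boundary condition $T(0)=0$ from (\ref{4-1}) to kill the boundary term. You are simply more explicit than the paper about the exact-derivative identification and about which boundary condition is actually used; the paper's proof compresses all of this into two sentences.
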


\begin{proof}
Multiplying both sides of the equation (\ref{5-4}) by $g$,
multiplying both sides of the equation (\ref{5-5}) by $f$, and then
making difference yields
\begin{eqnarray*}
(pf')'g-(pg')'f+\left[\alpha-\beta+(\tau(t)-\sigma(t))\right]pfg=0.
\end{eqnarray*}
Integrating both sides of the above equality from $0$ to $t$, and
  using the boundary conditions given as in the system
 (\ref{4-1}), one can get the assertion of Lemma \ref{5-1}
 directly.
\end{proof}

By the standard Sturm-Liouville theory for second-order ODEs, we
know that $T_{l,j,\phi}$ has exactly $j-1$ zeros on the interval
$(0, R)$. So, $T_{l,1,\phi}$ keeps its sign unchanged on $(0, R)$.
Without loss of generality, we may assume that $T_{l,1,\phi}$ and
$T_{k,1,\phi}$ are both greater than $0$, where $l<k$. Then, by
Lemma \ref{lemma5-1}, when $t=R$, we have
$\mu_{l,1,\phi}(R)<\mu_{k,1,\phi}(R)$, $l<k$. Since for the
eigenvalue problem (\ref{eigen-N11}), we know from its sequence
(\ref{sequence-4}) that $\mu_{1,\phi}=\mu_{0,1,\phi}=0$. Hence, if
one wants to get the first non-zero Neumann eigenvalue
$\mu_{1,\phi}$ of the Witten-Laplacian on $B_{R}(o)$, one only needs
to know exactly which one is smaller between $\mu_{0,2,\phi}$ and
$\mu_{1,1,\phi}$.

The following lemma is important and fundamental.
\begin{lemma}
When $l\geq 1$, $T_{l,j,\phi}'$ has only $j-1$ zeros in the interval
$(0,R)$.
\end{lemma}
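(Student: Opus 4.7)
The plan is to combine a Rolle-type lower bound on the zero count of $T'_{l,j,\phi}$ with a matching upper bound extracted from the ODE (\ref{5-1}). The linchpin is that the zeroth-order coefficient $b(t) := \mu_{l,j,\phi} - v_l / S_{\kappa}^{2}(t)$ is \emph{strictly increasing} on $(0,R)$: since $v_l = l(l+n-2) > 0$ for $l \ge 1$ and $S_{\kappa}(t)$ is strictly increasing on $(0,\infty)$ when $\kappa \in \{0,-1\}$ and on $(0,\pi/2)$ when $\kappa = 1$, the term $v_l/S_{\kappa}^{2}$ is strictly decreasing. Note that the function $\phi$ enters only the first-order coefficient in (\ref{5-1}), not $b$.

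For the lower bound, standard Sturm--Liouville oscillation applied to (\ref{5-2}) yields that $T_{l,j,\phi}$ has exactly $j-1$ simple zeros in $(0,R)$. Together with the regularity requirement $T_{l,j,\phi}(0)=0$ for $l\ge 1$ (forced by the asymptotics $T\sim t^{l}$ near the origin), one obtains $j$ zeros $0 = t_{0} < t_{1} < \cdots < t_{j-1} < R$ in $[0,R)$. Applying Rolle's theorem between consecutive pairs immediately produces at least $j-1$ interior zeros of $T'_{l,j,\phi}$ in $(0, t_{j-1}) \subset (0, R)$.

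For the matching upper bound I would inspect the critical points of $T := T_{l,j,\phi}$. At any such $s$ with $T(s) \ne 0$, (\ref{5-1}) reduces to $T''(s) = -b(s)T(s)$, so on a subinterval where $T > 0$ a local maximum forces $b(s) \ge 0$ and a local minimum forces $b(s) \le 0$. In each inter-zero interval $(t_{i}, t_{i+1})$ we may assume WLOG $T > 0$; since the simple zero at $t_{i}$ has $T'(t_{i}) > 0$, the first critical point $s_1$ must be a maximum, and any subsequent critical point $s_{2} > s_{1}$ would be a local minimum, forcing $0 \le b(s_{1}) < b(s_{2}) \le 0$ by strict monotonicity of $b$ -- a contradiction. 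Thus each $(t_{i}, t_{i+1})$ contains exactly one zero of $T'$, accounting for the $j-1$ zeros in $(0, t_{j-1})$.

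The main obstacle is then the tail interval $(t_{j-1}, R)$, where $T \neq 0$ in the interior but the Neumann condition $T'(R) = 0$ lives at the right endpoint: one has to rule out interior zeros of $T'$ there. WLOG $T > 0$ on $(t_{j-1}, R]$, so $T'(t_{j-1}) > 0$. Any interior zero $s \in (t_{j-1}, R)$ of $T'$ would again be a maximum with $b(s) \ge 0$, and by the same monotonicity argument would be the unique interior critical point of $T$, so $T' < 0$ on $(s, R)$. Then $T'(R) = 0$ approached from below forces $T''(R) \ge 0$, whereas strict monotonicity gives $b(R) > b(s) \ge 0$ and hence $T''(R) = -b(R) T(R) < 0$ (using $T(R) > 0$, since $T(R) = T'(R) = 0$ would force $T \equiv 0$ by ODE uniqueness at the regular point $R$) -- a contradiction. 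This rules out any interior zero of $T'$ in $(t_{j-1}, R)$, completing the count to exactly $j-1$.
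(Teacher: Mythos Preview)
Your proof is correct and follows essentially the same approach as the paper: both arguments evaluate the ODE at critical points to obtain $T''(s)=-b(s)T(s)$ and then use the strict monotonicity of $S_{\kappa}$ (equivalently of $b$) to rule out a max-followed-by-min pattern, with the Neumann endpoint $R$ serving as the would-be minimum in the tail interval. The paper treats only the case $j=1$ explicitly and then asserts one ``repeats in each nodal domain,'' whereas you have carefully separated the Rolle lower bound, the inter-zero intervals, and the tail $(t_{j-1},R)$; but the underlying mechanism is identical.
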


\begin{proof}
From (\ref{5-3}), one has
\begin{eqnarray} \label{5-6}
pT_{l,j,\phi}''+p'T_{l,j,\phi}'+\mu_{l,j,\phi}pT_{l,j,\phi}-v_l
S_{\kappa}^{-2}pT_{l,j,\phi}=0.
\end{eqnarray}
Since $T_{l,1,\phi}$ has no zero points on the interval $(0, R)$, we
can assume that $T_{l,1,\phi}$ is greater than $0$. According to the
boundary conditions, if $T_{l,1,\phi}'$ is not constantly greater
than $0$ on the interval $(0, R)$, then there exists a $t_0<t_1$
such that $T_{l,1,\phi}''(t_0)\leq0$, $T_{l,1,\phi}'(t_0)=0$ and
$T_{l,1,\phi}''(t_1)\geq0$, $T_{l,1,\phi}'(t_1)=0$ hold true.
Together with (\ref{5-6}), we can obtain
\begin{eqnarray*}
S_{\kappa}^2(t_0)\geq\frac{v_{l}}{\mu_{l,1,\phi}}\geq
S_{\kappa}^2(t_1).
\end{eqnarray*}
Due to the increasing property of $S_{\kappa}(t)$, this contradicts
with $t_0<t_1$. So, $T'_{l,1,\phi}$ has no zero points in the
interval $(0,R)$. For the case  $T'_{l,j,\phi}$, $j>1$,  one only
needs to repeat the above argument in each nodal domain.
\end{proof}

It is not hard to know that the function $T_{0,2}$ satisfies
\begin{eqnarray}  \label{5-7}
\left\{
\begin{array}{ll}
(pT_{0,2,\phi}')'+\mu_{0,2,\phi}pT_{0,2,\phi}=0, \\[2mm]
 T_{0,2,\phi}'(0)=T_{0,2,\phi}'(R)=0.
\end{array}
\right.
 \end{eqnarray}
Since $T_{0,1,\phi}$ is a non-zero constant function, and
$T_{0,2,\phi}$ is orthogonal to $T_{0,1,\phi}$ in the sense of
(\ref{add-extra}), we know that $T_{0,2,\phi}$ changes sign on the
interval $(0, R)$. Therefore, we may assume that $T_{0,2,\phi}$ is
positive on some interval $(0, r_0)$ and $T_{0,2,\phi}(r_0) = 0$,
$0<r_{0}<R$. If there exists $r^{\ast}\in[0,r_0)$ such that
$T_{0,2,\phi}''(r^{\ast}) \geq 0$ and $T_{0,2,\phi}'(r^{\ast}) = 0$,
then substituting this fact into (\ref{5-7}) yields $
((pT_{0,2,\phi}')'+\mu_{0,2,\phi}pT_{0,2,\phi})(r^{\ast})> 0$, which
contradicts with the first equation in the system (\ref{5-7}).
Hence, we conclude that $T'_{0,2,\phi}$ is negative on the interval
$(0, r_0)$. Since $\phi$ is non-increasing, $p'\geq 0$ can be
obtained, and then from (\ref{5-7}) again, we have
$T''_{0,2,\phi}(r_0) \geq 0$ at $r_0$.

We notice that the function $T_{1,1,j}$ satisfies the following
equation
\begin{eqnarray} \label{5-8}
(pT_{1,1,\phi}')'+(\mu_{1,1,\phi}-(n-1)
S_{\kappa}^{-2})pT_{1,1,\phi}=0.
\end{eqnarray}
Differentiating both sides of the first equation in the system
(\ref{5-7}) results in
\begin{eqnarray} \label{5-9}
(pT_{0,2,\phi}'')'+\left(\mu_{0,2,\phi}+(\frac{p'}{p})'\right)pT'_{0,2,\phi}=0.
\end{eqnarray}
Combining (\ref{5-8})-(\ref{5-9}), and applying Lemma
\ref{lemma5-1}, we can obtain at $r_0$ that
\begin{eqnarray} \label{5-10}
&& p (T_{1,1,\phi} T_{0,2,\phi}'' - T_{1,1,\phi}' T_{0,2,\phi}'
)(r_0)
= \nonumber \\
&& \qquad \qquad \int_0^{r_0} \left[\mu_{1,1,\phi}-\mu_{0,2,\phi} +
\left(\left(-\frac{p'}{p}\right)'-(n-1)S_{\kappa}^{-2}\right)\right]pT_{1,1,\phi}
T_{0,2,\phi}'dt. \qquad
\end{eqnarray}
Since $\phi$ is a convex function, $\phi''\geq0$, and so we have
\begin{eqnarray*}
-\left(\frac{(n-1)C_{\kappa}}{S_{\kappa}}-\phi'\right)'-(n-1)S_{\kappa}^{-2}\geq
0.
\end{eqnarray*}
 Substituting the fact
  \begin{eqnarray*}
p(t)=e^{\int_\epsilon^{t}\left(\frac{(n-1)C_{\kappa}}{S_{\kappa}}-\phi'\right)
ds}
  \end{eqnarray*}
   into the above inequality, one has
\begin{eqnarray*}
\left(-\frac{p'}{p}\right)'-(n-1)S_{\kappa}^{-2}\geq 0.
\end{eqnarray*}
Together with the fact that at $r_0$, $T_{1,1,\phi}>0$,
$T_{1,1,\phi}'\geq 0$, $T_{0,2,\phi}>0$, $T_{0,2,\phi}'\leq 0$ and
$T_{0,2,\phi}''\geq 0$, it follows from (\ref{5-10}) that
$\mu_{1,1,\phi}< \mu_{0,2,\phi}$. That is to say, the first non-zero
Neumann eigenvalue $\mu_{1,\phi}(B_{R}(o))$ of the Witten-Laplacian
on $B_{R}(o)$ should be $\mu_{1,\phi}=\mu_{1,1,\phi}$. Substituting
this fact in (\ref{5-1}) results in
\begin{eqnarray*}
T''+\left[\frac{(n-1)C_{\kappa}}{S_{\kappa}}-\phi'\right]T'+\left(\mu_{1}(B_{R}(o))-v_{1}S_{\kappa}^{-2}\right)T=0,
 \end{eqnarray*}
which is exactly the first equation in the system (\ref{4-1}). This
completes the proof of Theorem \ref{theo4-1}.

\section*{Acknowledgments}
\renewcommand{\thesection}{\arabic{section}}
\renewcommand{\theequation}{\thesection.\arabic{equation}}
\setcounter{equation}{0} \setcounter{maintheorem}{0}

This research was supported in part by the NSF of China (Grant Nos.
11801496 and 11926352), the Fok Ying-Tung Education Foundation
(China), Hubei Key Laboratory of Applied Mathematics (Hubei
University), and Key Laboratory of Intelligent Sensing System and
Security (Hubei University), Ministry of Education. The authors are
grateful to the anonymous referee for careful reading and valuable
comments such that the paper appears as its present version.

\end{document}